\documentclass[11pt]{amsart}

\usepackage{amsmath,amssymb,latexsym}
\usepackage{dsfont}
\usepackage[T1]{fontenc}
\usepackage{enumerate}
\usepackage{eqnarray}
\usepackage{mathtools}
\usepackage{url}
\usepackage{xcolor}
\usepackage{enumitem}

\usepackage[a4paper,top=3cm, bottom=3cm, inner=3.5cm,outer=2.7cm,nofoot,headsep=1.5cm]{geometry}
\def\Ind#1#2{#1\setbox0=\hbox{$#1x$}\kern\wd0\hbox to 0pt{\hss$#1\mid$\hss}
\lower.9\ht0\hbox to 0pt{\hss$#1\smile$\hss}\kern\wd0}
\def\Notind#1#2{#1\setbox0=\hbox{$#1x$}\kern\wd0\hbox to 0pt{\mathchardef
\nn="3236\hss$#1\nn$\kern1.4\wd0\hss}\hbox to 0pt{\hss$#1\mid$\hss}\lower.9\ht0
\hbox to 0pt{\hss$#1\smile$\hss}\kern\wd0}
\def\indi{\mathop{\mathpalette\Ind{}}}
\def\nindi{\mathop{\mathpalette\Notind{}}}

\theoremstyle{plain}
\newtheorem{theorem}{Theorem}[section]
\newtheorem{prop}[theorem]{Proposition}
\newtheorem{proposition}[theorem]{Proposition}

\newtheorem{fact}[theorem]{Fact}
\newtheorem{lemma}[theorem]{Lemma}
\newtheorem{cor}[theorem]{Corollary}

\theoremstyle{definition}
\newtheorem{defn}[theorem]{Definition}
\newtheorem{definition}[theorem]{Definition}
\newtheorem{remark}[theorem]{Remark}

\newtheorem{example}[theorem]{Example}

\newtheorem{notation-num}{Notation}

\newcommand{\tp}{\operatorname{tp}}

\newcommand{\cU}{\mathbb{M}}

\newcommand{\qftp}{\operatorname{qftp}}
\newcommand{\acl}{\operatorname{acl}}
\newcommand{\dcl}{\operatorname{dcl}}
\newcommand{\eq}{\operatorname{eq}}

\newcommand{\bdd}{\operatorname{bdd}}
\newcommand{\heq}{\operatorname{heq}}

\newcommand{\NTP}{\operatorname{NTP}}
\newcommand{\Autf}{\operatorname{Autf}}

\newcommand{\EM}{\operatorname{EM}}

\newcommand{\Aut}{\operatorname{Aut}}
\newcommand{\DLO}{\operatorname{DLO}}

\newcommand{\Th}{\operatorname{Th}}

\newcommand{\st}{\operatorname{st}}
\newcommand{\lex}{\operatorname{lex}}

\newcommand{\ind}{\operatorname{\indi}}
\newcommand{\nind}{\operatorname{\nindi}}

\newcommand{\dprk}{\operatorname{dprk}}

\newcommand{\im}{\operatorname{Im}}

\newcommand{\Q}{\mathbb{Q}}

\newcommand{\Lcal}{\mathcal{L}}

\newcommand{\TTr}{\operatorname{Tr}}
\newcommand{\TCirc}{\operatorname{Circ}}
\newcommand{\DT}{\operatorname{DTr}}
\newcommand{\DCirc}{\operatorname{DCirc}}
\newcommand{\eqdef}{\mathrel{:=}}
\newcommand{\circleT}{T_{\rm circ}}
\newcommand{\wideT}{\widehat T}

 \email{artem@umd.edu} 
 
\title{Forking in monadically NIP theories}
\author{Artem Chernikov}

\begin{document}
\begin{abstract}
We demonstrate that forking in monadically NIP theories satisfies a very strong form of triviality over arbitrary sets, generalizing Baldwin-Shelah in monadically stable theories and analogous to Shelah's result  for finite satisfiability over models in monadically NIP theories. This implies in particular that strict non-forking independence is particularly well behaved, and that the forking relation induces a canonical semilinear partial order on singletons. We also give an example of an $\omega$-categorical monadically NIP theory so that no finite set is an extension base.
 \end{abstract}
\maketitle

\section{Introduction}
A theory is \emph{monadically NIP} if every expansion of every model by
unary predicates is NIP. 			 The study of these theories goes back to
Baldwin--Shelah and Shelah
\cite{baldwin1985second,shelah2006monadic}; more recent characterizations
using finite satisfiability and indiscernible sequences include 
\cite{blumensath2011simplea, blumensath2011simple, braunfeld2021characterizations,braunfeld2024corrigenda,braunfeld2025indiscernibles}.
For deep and rapidly developing connections with finite model theory and algorithmic graph theory,
see e.g.~the survey \cite{pilipczuk2025lens}.

\emph{Forking} in general NIP theories has been studied extensively \cite{shelah2009dependent, adler2008introduction, hrushovski2011nip,chernikov2012forking, NTP2, kaplan2014strict, yaacov2014independence,simon2020amalgamation}. Unlike in stable theories, nonforking need not be
symmetric, and has other interesting issues and features. In this paper we show that forking is particularly well behaved restricting to monadic NIP theories.  

Our first result is that forking in a monadically NIP theory satisfies
the \emph{Baldwin--Shelah triviality} over every small set $A$
(Theorem~\ref{arb:forking-bs}): for any tuples $\bar{a},\bar{b}$ and singleton $c$,
\[
 \bar b\ind^f_A\bar a
 \quad\Longrightarrow\quad
 \bar bc\ind^f_A\bar a\ \text{ or }\ \bar b\ind^f_A\bar ac.
\]
We also prove an abstract version for bounded standard preindependence
relations over extension bases with left extension 
(Theorem~\ref{thm: dich for ind rel mon NIP}), which simultaneously generalizes  the results  for  forking in monadically stable theories in \cite{baldwin1985second} and for finite satisfiability over models in monadically NIP theories in \cite{shelah2006monadic}. In the same way as the analogous property for finite satisfiability $\ind^u$ is known to characterize 
monadic NIP \cite{braunfeld2021characterizations,braunfeld2024corrigenda}, this property of forking characterizes monadic NIP among all NIP theories (Corollary~\ref{conv:forking-characterization}). 

The Baldwin--Shelah triviality implies left total triviality
over arbitrary bases (Corollary~\ref{cor: arbitrary-left-total}). Over extension bases, it also gives pairwise criteria for
\emph{strict independence}, answering some open 
questions of Kaplan and Usvyatsov \cite{kaplan2014strict} in the monadically NIP case (Section \ref{sec: strict nf}).

In Section \ref{sec: fork forest} we consider a common generalization of the forking equivalence relation in monadically stable theories studied by Baldwin and Shelah \cite{baldwin1985second} and Shelah's partial order defined via $\ind^u$ over models in monadically NIP theories \cite{shelah2006monadic} (studied further in \cite{blumensath2011simple}). Namely, for arbitrary fixed small sets $C,D$, the relation
$a\trianglerighteq^fb$ defined by $a\nind^f_C Db$ is a preorder
on  singletons outside $\acl(C)$. Our main result is that its quotient partial order on
$Y :=\{a\notin\acl(C):a\ind^f_C D\}$ is ``tree like'', or more precisely \emph{semilinear}, i.e.~the set of predecessors of any element is linearly ordered;  and the
excluded points, when present, form one greatest class
(Proposition~\ref{strict:preorder}). This semilinear partial order contains
an infinite strict chain for some $C,D$ if and only if $T$ is unstable (Theorem~\ref{strict:instability-characterization}).

Finally, using the construction in \cite{estevan2021non}, we obtain an
$\omega$-categorical monadically NIP distal theory with no finite
forking-extension base. We also prove dp-minimality and distality for
arbitrary colored meet trees with linearly ordered open cones on the way 
(Theorem~\ref{thm: dp-min of conv ord trees}).

%
%
%
%
%
%
%
%
%
%
%
%
%
%
%
%
%
%
%
%
%
%
%
%

\subsection{Acknowledgements}
The results were mostly obtained in the Spring of 2026 and presented on several occasions during the Summer of 2026, including seminars at the Hebrew and Dresden universities, I thank the participants for their comments.  I also thank Eran Alouf, Manuel Bodirsky, Ioannis Eleftheriadis, Itay Kaplan, Chris Laskowski, Aris Papadopoulos and Garrett Drake Peters for some conversations that motivated us to write this note. I thank Pierre Simon and Szymon Toru\'nczyk for discussions related to Section \ref{sec: convexly ordered trees} many years ago. Chernikov was partially supported by the NSF
Research Grants DMS-2246598, DMS-2554164 and by the Alexander von Humboldt Foundation.

\section{Preliminaries}

Throughout the paper, $a,b,c, \ldots$ will denote small tuples and $A,B,C, \ldots$ small subsets of a sufficiently saturated and homogeneous monster model $\cU \models T$.

\subsection{Preindependence relations and forking in NIP theories}

In this section we review some pre-independence relations and their basic properties, in general and in NIP theories.

\begin{definition}
	We will consider the following properties of a relation $\ind$ on triples of small subsets of $\cU$.
	\begin{enumerate}
	\item (Automorphism) Invariance: if $a \ind_{A} b$ and $\sigma \in \Aut(\cU)$, then $\sigma(a) \ind_{\sigma(A)} \sigma(b)$.
	\item Monotonicity: if $a a' \ind_{A} b b'$ then $a \ind_{A} b$.
	\item (Strong) finite character: if $a \nind_{A} b$, then there is a formula $\varphi(x) \in \tp(a/Ab)$ so that for any $a' \models \varphi(x)$ we have $a' \nind_{A} b$.
	\item Base monotonicity: if $a \ind_{A} bc$ then $a \ind_{A b} c$.
	\item Left transitivity (over $A$): $a \ind_{A b} c$ and $b \ind_{A} c$ implies $a b \ind_{A} c$.
	\item Right extension (over $A$): if $a \ind_{A} b$ then for all $c$ there is $c' \equiv_{A b} c$ with $a \ind_{A} b c'$. 
		\item Left extension (over $A$): if $a \ind_{A} b$ then for all $c$ there is $c' \equiv_{A a} c$ with $a c' \ind_{A} b$. 
		\item Boundedness: there is a function $f$ on cardinals so that for every small set $C$ and $p(x) \in S(C)$ with $x$ a finite tuple, for every small $D \supseteq C$ the size of the set $\{\tp(a/D) : a \models p \land a \ind_{C} D\}$ is bounded by $f(|C|)$.
	\end{enumerate}
	
	We will say that $\ind$ is a \emph{standard preindependence relation} if it satisfies (1)--(6). 
\end{definition}

\begin{defn}
	As usual, a formula $\varphi(x,b) \in L(\cU)$ \emph{divides} over a (small) set $A$ if there exists an $A$-indiscernible sequence $(b_i)_{i \in \omega}$ so that $b_i \equiv_{A} b$ and $\{\varphi(x,b_i) : i \in \omega \}$ is inconsistent (equivalently, we may require $b_0 = b$ and $k$-inconsistency for some $k \in \omega$). And $\varphi(x,b)$ \emph{forks} over $A$ if $\varphi(x,b) \vdash \bigvee_{i < n} \psi_i(x,c_i)$ for some $n \in \omega$ and some formulas $\psi_i(x,c_i) \in L(\cU)$ dividing over $A$. A (possibly large) partial type $\pi(x)$ forks over $A$ if $\pi(x) \vdash \varphi(x,b)$ for some formula $\varphi(x,b)$ forking over $A$.
\end{defn}

\begin{remark}
	Equivalently, $\tp(a/Ab)$ does not divide over $A$ if and only if for any $A$-indiscernible sequence $I$ with $b \in I$ there is an $Aa$-indiscernible sequence $I'$ (or just all elements of $I'$ have the same type over $Aa$)  so that $I' \equiv_{Ab} I$.
\end{remark}

Let us recall Lascar strong types.
\begin{defn}
	Let $\Autf(\cU/A)$ be the subgroup of all automorphisms of $\cU$ generated by the set $\{f \in \Aut(\cU/M) : M \supseteq A \textrm{ is some small model}\}$. We write $a \equiv^{L}_{A} b$ (``$a$ and $b$ have the same Lascar strong type over $A$'') if $b = \sigma(a)$ for some $\sigma \in \Autf(\cU/A)$.
\end{defn}
\begin{fact}\label{fac: Lasc str type}
	The relation $\equiv^L_{A}$ is the finest $\Aut(\cU/A)$-invariant equivalence relation with boundedly many classes (on tuples of fixed length). It is also defined as the transitive closure of the relation $E(a,b)$ saying that there is an $A$-indiscernible sequence containing both $a$ and $b$ (see e.g.~\cite[Corollary 33]{adler2008introduction}).
\end{fact}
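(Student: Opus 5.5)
The plan is to prove three things in turn: that $\equiv^L_A$ is an $\Aut(\cU/A)$-invariant equivalence relation with boundedly many classes; that it refines every such relation; and that it coincides with the transitive closure $\Theta$ of the relation $E(a,b)$ from the statement. The key tool throughout is a single lemma.

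\textbf{Lemma.} If $M \supseteq A$ is a small model and $a \equiv_M b$, then $E$ holds for $a,c$ and for $c,b$ for some $c$. In particular $a \,\Theta\, b$.

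To prove the lemma, I would take a global type $p(x) \supseteq \tp(a/M)$ that is finitely satisfiable in $M$, i.e.\ a coheir; it exists by the standard ultrafilter argument. Such a $p$ is $M$-invariant. I would then choose $c_0 \models p|_{Mab}$, $c_1 \models p|_{Mabc_0}$, and so on. The sequence $(c_i)_{i<\omega}$ is a Morley sequence of $p$ over $Mab$. Moreover, by $M$-invariance of $p$ and $a \models p|_M$, the sequence $a, c_0, c_1, \ldots$ is $M$-indiscernible. Indeed, it is also a Morley sequence in $p$ over $M$ if read in reverse order; equivalently, $(c_i)$ continued to the left by $a$ realizes $p^{\otimes}$ restricted to $M$. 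I expect this to be the main technical point: checking the order convention so that appending $a$ (respectively $b$) at the correct end preserves indiscernibility. The same argument applies to $b$, using $b \models p|_M$ since $a \equiv_M b$. Both sequences are $A$-indiscernible because $A \subseteq M$. Hence $E(a,c_0)$ and $E(c_0,b)$ hold.

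Next I would establish the basic properties of $\equiv^L_A$. Invariance follows because $\Autf(\cU/A)$ is normal in $\Aut(\cU/A)$: for $\tau \in \Aut(\cU/A)$ we have $\tau \Aut(\cU/M)\tau^{-1} = \Aut(\cU/\tau(M))$, and $\tau(M) \supseteq A$ is again a model. Boundedness follows because $a \equiv_M b$ gives some $\sigma \in \Aut(\cU/M)$ with $\sigma(a)=b$ by homogeneity. So for one fixed small $M \supseteq A$, the classes number at most $|S(M)|$, which is bounded.

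For minimality, let $F$ be any $\Aut(\cU/A)$-invariant equivalence relation with at most $\kappa$ classes. First I would show $E \subseteq F$. Given an $A$-indiscernible sequence containing $a,b$, I would stretch it by compactness to length $\kappa^+$. Some two elements of the stretched sequence are then $F$-equivalent. By indiscernibility and invariance of $F$ (any two increasing pairs have the same type over $A$, so an automorphism over $A$ maps one to the other), every pair in the sequence is $F$-equivalent, in particular $a \,F\, b$. Hence $\Theta \subseteq F$.

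By the lemma, $\sigma \in \Aut(\cU/M)$ gives $a \,\Theta\, \sigma(a)$, so every generator of $\Autf(\cU/A)$ preserves $\Theta$-classes. Therefore ${\equiv^L_A} \subseteq \Theta \subseteq F$ for every such $F$.

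Finally, I would show $\Theta \subseteq {\equiv^L_A}$. The relation $\Theta$ is itself $\Aut(\cU/A)$-invariant, since $E$ is. By the lemma it is refined by $\equiv_M$, so it is bounded. Applying the previous paragraph, with $F = \equiv^L_A$ itself, gives $\Theta \subseteq {\equiv^L_A}$. Combined with ${\equiv^L_A} \subseteq \Theta$, the two relations are equal, and $\equiv^L_A$ is the finest bounded invariant equivalence relation.
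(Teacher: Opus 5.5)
Your proof is correct and is the standard argument behind the result, which the paper cites from Adler (Corollary 33) rather than proving: a Morley sequence of a global coheir of $\tp(a/M)$, started at $a$ and respectively at $b$, gives $a \, E \, c_0 \, E \, b$ whenever $a \equiv_M b$, and pigeonhole on a stretched $A$-indiscernible sequence of length $\kappa^+$ gives $E \subseteq F$ for every bounded $\Aut(\cU/A)$-invariant $F$. Two cosmetic points: no reversal of order is needed, since $a, c_0, c_1, \ldots$ is already a Morley sequence of $p$ over $M$ in the usual order; and in the stretching step it suffices that increasing pairs of the long sequence have the same type over $A$ as $(a,b)$, after which an automorphism over $A$ moves an $F$-equivalent pair onto $(a,b)$.
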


\begin{defn}
\begin{enumerate}
\item We write $a \ind^f_{A} b$ if $\tp(a/Ab)$ does not fork over $A$.
\item We write $a \ind^u_{A} b$ if $\tp(a/Ab)$ is finitely satisfiable in $A$, i.e.~every formula $\varphi(x) \in \tp(a/Ab)$ is satisfied by some tuple in $A$.
	\item We write $a \ind^i_{A} b$ if there is a global type $p$ extending $\tp(a/Ab)$ which is Lascar-invariant over $A$, i.e.~for every $c \equiv^{L}_{A} d$ and $\varphi(x,y) \in \mathcal{L}(A)$, $\varphi(x,c) \in p \Leftrightarrow \varphi(x,d) \in p$.
\end{enumerate}
	
\end{defn}

\begin{fact}\label{fac: indisc pres for lasc inv}
	(Any $T$.) If $I$ is an $A$-indiscernible sequence and $a \ind^i_{A} I$ then $I$ is indiscernible over $Aa$. And $a \ind^i_{A} b$ if and only if for any finitely many $A$-indiscernible sequences (of tuples) $I_1, \ldots, I_n$ there exist sequences $I'_1, \ldots, I'_n$ so that $I'_1 \ldots I'_n \equiv_{Ab} I_1 \ldots I_n$ and each $I'_i$ is indiscernible over $Aa$ (see \cite[Remark 2.20]{chernikov2012forking}).
\end{fact}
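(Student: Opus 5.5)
For the first assertion, I would let $p$ be a global type extending $\tp(a/AI)$ that is Lascar-invariant over $A$, and compare two increasing $n$-tuples $\bar\imath, \bar\jmath$ from $I$. The aim is to show $\bar\imath \equiv^L_A \bar\jmath$. Lascar-invariance of $p$ then gives $\varphi(x,\bar\imath) \in p \iff \varphi(x,\bar\jmath)\in p$ for every $\varphi \in \mathcal{L}(A)$. Since $\tp(a/AI) \subseteq p$, the tuple $a$ cannot distinguish $\bar\imath$ from $\bar\jmath$ over $A$, which is exactly indiscernibility of $I$ over $Aa$.

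To get $\bar\imath \equiv^L_A \bar\jmath$, I use the characterization of $\equiv^L_A$ in Fact~\ref{fac: Lasc str type}, the transitive closure of $E$. First I extend $I$, by compactness and preserving $A$-indiscernibility, so that it is long enough. Then I pick an increasing tuple $\bar k$ lying entirely above both $\bar\imath$ and $\bar\jmath$, followed by further disjoint increasing blocks $\bar k', \bar k'', \ldots$. The sequence $(\bar\imath, \bar k, \bar k', \ldots)$ of consecutive disjoint blocks is $A$-indiscernible, so $\bar\imath \mathrel{E} \bar k$; likewise $\bar\jmath \mathrel{E} \bar k$. Hence $\bar\imath \equiv^L_A \bar\jmath$. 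The only care needed is that $\bar\imath$ and $\bar\jmath$ may overlap, which is why the intermediate $\bar k$ is used.

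For the forward direction of the equivalence, let $p$ be as above, now extending $\tp(a/Ab)$. Take $a' \models p|_{AbI_1\ldots I_n}$. Then $a'\equiv_{Ab} a$ and $a' \ind^i_A I_1\ldots I_n$, since $p$ witnesses this. By the first part, each $I_i$ is indiscernible over $Aa'$. Applying an automorphism over $Ab$ sending $a'$ to $a$, the images $I'_i$ of the $I_i$ satisfy $I'_1\ldots I'_n\equiv_{Ab} I_1\ldots I_n$ and are each indiscernible over $Aa$.

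For the converse, which is the main step, I would show that the partial type
\[\Sigma(x) = \tp(a/Ab)\cup\{\varphi(x,c)\leftrightarrow\varphi(x,d) : \varphi\in\mathcal{L}(A),\ c\equiv^L_A d\}\]
is consistent. Any completion of $\Sigma$ to a global type is then Lascar-invariant over $A$ and extends $\tp(a/Ab)$. By compactness it suffices to handle finitely many pairs $(c_j,d_j)$. By Fact~\ref{fac: Lasc str type}, each pair is joined by a finite $E$-chain $c_j = e_0 \mathrel{E} e_1 \mathrel{E} \cdots \mathrel{E} e_m = d_j$, and each link is witnessed by an $A$-indiscernible sequence containing both of its endpoints. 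Collecting all these witnessing sequences gives finitely many $I_1,\ldots,I_n$. The hypothesis supplies $I'_1\ldots I'_n\equiv_{Ab} I_1\ldots I_n$ with each $I'_i$ indiscernible over $Aa$. Let $\sigma\in\Aut(\cU/Ab)$ map $I'_1\ldots I'_n$ to $I_1\ldots I_n$. Then $a^* = \sigma(a)$ realizes $\tp(a/Ab)$ and each $I_i$ is indiscernible over $Aa^*$. So $a^*$ satisfies $\varphi(x,e_k)\leftrightarrow\varphi(x,e_{k+1})$ along every link, hence $\varphi(x,c_j)\leftrightarrow\varphi(x,d_j)$ for each $j$. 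This proves consistency.
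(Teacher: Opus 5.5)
Your proof is correct and follows the standard argument behind the cited \cite[Remark 2.20]{chernikov2012forking}; the paper itself only quotes the fact without proof. The first assertion and the forward direction rest on increasing tuples of an $A$-indiscernible sequence having the same Lascar strong type over $A$, and the converse is compactness together with the description of $\equiv^L_A$ as the transitive closure of $E$ from Fact~\ref{fac: Lasc str type}; the only implicit point is that $I$ must be infinite, as is the usual convention, which your compactness-stretching step uses and without which the first assertion can fail.
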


\begin{fact}(Any $T$)\label{fac: props of forking etc in NIP}
\begin{enumerate}
\item $\ind^f$ is a standard preindependence relation (see e.g.~\cite[Section 5]{adler2008introduction}).
	\item $\ind^u$ is a standard bounded preindependence relation and also satisfies left extension over models (see e.g.~\cite[Remark 2.16]{chernikov2012forking}).
	\item $\ind^i$ is a standard bounded (by $2^{2^{\kappa}}$) preindependence relation, and $\ind \Rightarrow \ind^i$ for any standard bounded preindependence relation $\ind$ (see \cite[Remark 2.20]{chernikov2012forking} and \cite[Corollary 35 and Proposition 37]{adler2008introduction}), 
	\item $a \ind^u_{A} b \Rightarrow a \ind^i_{A} b \Rightarrow a \ind^f_{A} b$ (see e.g.~\cite[Section 2]{chernikov2012forking}).
\end{enumerate}	
\end{fact}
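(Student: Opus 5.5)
Each item is a standard fact from the cited references, so the plan is to recall the short argument behind each and verify the axioms one at a time.

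For (1), invariance and monotonicity of $\ind^f$ are immediate from the definitions. Strong finite character holds because if $\tp(a/Ab)$ forks, it contains a forking formula $\varphi(x,b)$, and every $a'\models\varphi$ then forks as well. For base monotonicity I note that an $Ab$-indiscernible sequence is $A$-indiscernible, so a formula dividing over $Ab$ divides over $A$. Hence a formula forking over $Ab$ forks over $A$. Right extension uses that the formulas over $\cU$ forking over $A$ form an ideal. Then $\tp(a/Ab)\cup\{\neg\psi : \psi \text{ forks over } A\}$ is consistent by compactness, and a realization yields the required $c'$ after applying an automorphism over $Ab$. Left transitivity is the only nontrivial axiom. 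Here I would follow Adler \cite[Section 5]{adler2008introduction}. First reduce to dividing, using right extension to put $c$ inside a suitable small model. Then, for an $A$-indiscernible sequence $I$ in $\tp(c/A)$, use $b\ind_A c$ to make $I$ indiscernible over $Ab$. Next use $a\ind_{Ab}c$ to make it indiscernible over $Aab$, moving by automorphisms fixing $Ac$. I expect this juggling of indiscernible sequences to be the main obstacle.

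For (2), invariance, monotonicity, finite character and base monotonicity of $\ind^u$ are immediate. Right extension comes from taking a global coheir via an ultrafilter on $A$. Left transitivity is checked formula by formula: a formula over $Ac$ true of $ab$ is witnessed by $b'\in A$, and then by $a'\in A$. Boundedness holds because a type over $D$ that is finitely satisfiable in $C$ is determined by an ultrafilter on $C^{|x|}$, giving at most $2^{2^{|C|+|T|}}$ such types. For left extension over a model $M$, given $a\ind^u_M b$ and $c$, I define $\tp(ac'/Mb)$ as an ultrafilter limit of pairs $(m,n)$ from $M$. The limit is taken along an ultrafilter witnessing finite satisfiability of $\tp(a/Mb)$. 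For each $m$ and each $\theta(a,y)\in\tp(c/Ma)$, I choose $n\in M$ with $\theta(m,n)$, using that $M\prec\cU$. A compactness check shows the limit is a consistent type over $Mb$ extending $\tp(a/Mb)$, with the $y$-part realizing $\tp(c/Ma)$.

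For (3), the axioms for $\ind^i$ follow from invariance of global types under $\Autf(\cU/A)$. Boundedness comes from counting global Lascar-invariant types: these are determined by their restriction to boundedly many Lascar classes. For the implication $\ind\Rightarrow\ind^i$ I would cite Adler \cite[Proposition 37]{adler2008introduction}, whose idea is this. By boundedness and right extension, $\tp(a/D)$ for $a\ind_A D$ has only boundedly many conjugates. Hence its orbit under $\Aut(\cU/A)$ induces a bounded invariant equivalence relation, which is coarser than $\equiv^L_A$ by Fact~\ref{fac: Lasc str type}. This gives Lascar invariance of a global extension obtained by right extension. For (4), a global coheir extending $\tp(a/Ab)$ is $\Aut(\cU/A)$-invariant, hence Lascar-invariant. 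If $a\ind^i_A b$, then by Fact~\ref{fac: indisc pres for lasc inv} every $A$-indiscernible sequence through $b$ can be moved over $Ab$ to one indiscernible over $Aa$. So $\tp(a/Ab)$ does not divide, and since $\ind^i$ has right extension, it does not fork either.
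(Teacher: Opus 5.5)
Your sketches reproduce the standard arguments from the sources the paper cites (Adler for $\ind^f$, for boundedness and for $\ind\Rightarrow\ind^i$; Chernikov--Kaplan for $\ind^u$ and $\ind^i$), and the paper offers nothing beyond those citations, so this is correct and essentially the same approach. Two steps deserve one more line each: for left transitivity of $\ind^u$, first use $a\ind^u_{Ab}c$ to replace $a$ by a tuple $t(b)$ from $Ab$, and only then apply $b\ind^u_A c$ to the $Ac$-formula $\varphi(t(y),y)$; and left transitivity of $\ind^i$ does not follow directly from $\Autf(\cU/A)$-invariance, but it does follow from the same two-step moving of (finitely many) $A$-indiscernible sequences that you use for dividing, via the characterization in Fact~\ref{fac: indisc pres for lasc inv}.
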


\begin{fact}
 \cite[Corollary 38]{adler2008introduction}	The following are equivalent in any theory:
	\begin{enumerate}
		\item $\ind^f$ is bounded,
		\item $\ind^f$ is bounded by the function $f(\kappa) = 2^{2^{\kappa}}$,
		\item $\ind^f = \ind^i$.
	\end{enumerate}
\end{fact}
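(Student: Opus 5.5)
The equivalence follows from the facts already recorded, by proving the cycle $(2)\Rightarrow(1)\Rightarrow(3)\Rightarrow(2)$. The only real content is the implication ``bounded standard preindependence $\Rightarrow \ind^i$'', which is Fact~\ref{fac: props of forking etc in NIP}(3).

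For $(2)\Rightarrow(1)$ there is nothing to do. If $\ind^f$ is bounded by the specific function $f(\kappa)=2^{2^{\kappa}}$, then it is bounded in the sense of the definition.

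For $(1)\Rightarrow(3)$ we need both inclusions.
\begin{itemize}
\item By Fact~\ref{fac: props of forking etc in NIP}(1), $\ind^f$ is a standard preindependence relation in any theory. Under assumption (1) it is moreover bounded. So Fact~\ref{fac: props of forking etc in NIP}(3) applies and gives $\ind^f\Rightarrow\ind^i$.
\item The reverse inclusion $\ind^i\Rightarrow\ind^f$ holds in any theory by Fact~\ref{fac: props of forking etc in NIP}(4).
\end{itemize}
Hence $\ind^f=\ind^i$.

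For $(3)\Rightarrow(2)$, Fact~\ref{fac: props of forking etc in NIP}(3) states that $\ind^i$ is bounded by $2^{2^{\kappa}}$. Since $\ind^f=\ind^i$, the relation $\ind^f$ is bounded by the same function.

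The substantive step is the one hidden inside Fact~\ref{fac: props of forking etc in NIP}(3). If one wanted to prove it directly rather than cite it, the plan would be as follows.

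Fix $a\ind_A b$, where $\ind$ is bounded and standard. Take a large $D\supseteq Ab$ of size well beyond the bound $f(|A|)$. Right extension gives $a'\equiv_{Ab}a$ with $a'\ind_A D$.

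We then argue that $\tp(a'/D)$ is invariant under Lascar-equivalent tuples in $D$, and so extends to a global $A$-Lascar-invariant type. By Fact~\ref{fac: Lasc str type}, it suffices to check that for any $A$-indiscernible sequence $I\subseteq D$, the type does not separate elements of $I$.

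Suppose it did. Using invariance together with monotonicity, we could move $a'$ along automorphisms that shift a long indiscernible sequence. This would produce more than $f(|A|)$ types over $D$ of realizations of $\tp(a/A)$ that are all $\ind$-independent from $D$ over $A$, contradicting boundedness.

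This counting-with-indiscernibles argument is the only delicate point, and it is exactly the content of \cite[Corollary 35 and Proposition 37]{adler2008introduction}.
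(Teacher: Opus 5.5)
Your proof is correct and is essentially the paper's route. The paper just cites \cite[Corollary 38]{adler2008introduction}, and your cycle $(2)\Rightarrow(1)\Rightarrow(3)\Rightarrow(2)$ is the standard derivation of that corollary from the facts already recorded in Fact~\ref{fac: props of forking etc in NIP}(1),(3),(4), which is where Adler's Corollary 35 and Proposition 37 enter.

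Your optional direct sketch is looser than it should be, though the main argument does not rely on it:
\begin{itemize}
\item The step ``$\tp(a'/D)$ does not separate Lascar-equivalent tuples of $D$, hence extends to a global $A$-Lascar-invariant type'' is not valid as stated. For example, take $D=\{d\}$ in a dense circular order with $A=\emptyset$; the type $x\neq d$ separates nothing in $D$, yet no global $\emptyset$-invariant nonalgebraic type exists.
\item The clean argument instead takes an $\ind$-free extension to all of $\cU$ (in a bigger monster) and notes that it has boundedly many $\Aut(\cU/A)$-conjugates. You then apply Fact~\ref{fac: Lasc str type} to the bounded $A$-invariant equivalence relation ``no conjugate separates $c,c'$''.
\end{itemize}
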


We recall that $\NTP_2$ is a larger class of theories containing all NIP and simple theories (see e.g.~\cite{NTP2}).
\begin{fact}\label{fac: forking in NIP equals inv}
\begin{enumerate}
	\item $T$ is NIP if and only if either of $\ind^u, \ind^i, \ind^f$ is bounded by $2^{\kappa}$; in particular, $\ind^i = \ind^f$ assuming NIP (see \cite{shelah2009dependent}, \cite[Corollary 38, Theorem 42]{adler2008introduction}).
	\item If $T$ is NTP$_2$ and $\ind^f$ is bounded, then $T$ is NIP (\cite[Theorem 4.3]{chernikov2012forking}; but this implication does not hold outside of $\NTP_2$ \cite{chernikov2016non}).
\end{enumerate}
\end{fact}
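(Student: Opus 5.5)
The statement collects several known implications, so I would prove it in pieces. The pieces are linked by the chain $\ind^u \Rightarrow \ind^i \Rightarrow \ind^f$ from Fact~\ref{fac: props of forking etc in NIP}(4) and by the equivalence of boundedness of $\ind^f$ with $\ind^f=\ind^i$ from \cite[Corollary 38]{adler2008introduction}.

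Because of the chain, a bound of $2^{\kappa}$ on $\ind^f$ gives the same bound on $\ind^i$ and on $\ind^u$. So for part (1) it suffices to prove two things: NIP implies $\ind^f$ is bounded by $2^{\kappa}$, and a bound of $2^{\kappa}$ on $\ind^u$ implies NIP. The ``in particular'' clause then follows at once from \cite[Corollary 38]{adler2008introduction}.

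\emph{Direction NIP $\Rightarrow$ bound.} Fix $p\in S(C)$, $D\supseteq C$, and $a\models p$ with $a\ind^f_C D$.
\begin{enumerate}
\item I would first show that $\tp(a/D)$ extends to a global type that is Lascar-invariant over $C$. This uses Shelah's results for NIP: forking equals dividing over the relevant base, and nonforking types admit invariant extensions.
\item Such a type $q$ is determined by data indexed by $C$ alone. For each $\varphi(x,y)$, whether $\varphi(x,d)\in q$ depends only on $\operatorname{Lstp}(d/C)$.
\item A naive count of sets of Lascar strong types gives only $2^{2^{|C|+|T|}}$. To reach $2^{\kappa}$ I would use NIP again, via finite alternation. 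Along any $C$-indiscernible sequence $(d_i)$, the truth value of $\varphi(x,d_i)\in q$ is eventually constant. Hence $q$ is determined by its restriction to a single small model $M\supseteq C$ of size $|C|+|T|$, together with the eventual truth values along a bounded family of indiscernible sequences.
\item This yields at most $2^{|C|+|T|}$ possibilities.
\end{enumerate}
I expect step 3, getting the sharp exponent rather than a double exponential, to be the main obstacle. If it does not go through cleanly, I would cite \cite{shelah2009dependent} for that count.

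\emph{Direction bound $\Rightarrow$ NIP.} Suppose $\varphi(x,y)$ has IP. By compactness, take an indiscernible sequence $(b_i)_{i\in\mathbb{Q}}$ that is shattered by $\varphi$. Let $C$ consist of a countable set of realizations of $\varphi$-patterns over the $b_i$'s. For each cut of $\mathbb{Q}$, or more simply for each subset of a dense set, take the type over $D=C\cup\{b_i\}$ of a limit of elements of $C$. Each such type is finitely satisfiable in $C$. Distinct subsets give distinct $\varphi$-types, all extending the same type over $C$ after restricting to a single complete type $p$ over $C$. Counting gives more than $2^{|C|+|T|}$ types, so $\ind^u$ is not bounded by $2^{\kappa}$.

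\emph{Part (2).} Assume $T$ is $\NTP_2$ with $\ind^f$ bounded, and suppose for contradiction that $\varphi$ has IP. I would build, over some $C$, a long $C$-indiscernible sequence witnessing IP. Then I would use the $\NTP_2$ chain condition: for indiscernible arrays, a type that does not divide along one row extends without dividing along an indiscernible sequence. This produces $2^{\lambda}$ many pairwise distinct nondividing types over $D\supseteq C$, one for each pattern of $\varphi$-instances. Over extension bases forking equals dividing in $\NTP_2$, which I would arrange by taking $C$ to be a model. Hence these types are nonforking, and their number contradicts boundedness by $f(|C|)$ once $\lambda$ is large. The key step is verifying non-dividing of the pattern types, which is where the array/broom lemma of \cite[Theorem 4.3]{chernikov2012forking} is used.
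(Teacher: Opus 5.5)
Your reduction to two implications via $\ind^u\Rightarrow\ind^i\Rightarrow\ind^f$ is correct; the paper itself gives no argument here, only references. However, your proof of ``bounded by $2^{\kappa}$ $\Rightarrow$ NIP'' cannot work as set up. Your $C$ is countable, and so is $D=C\cup\{b_i:i\in\mathbb{Q}\}$. Hence there are at most $2^{|D|+|T|}=2^{|C|+|T|}$ complete types over $D$ in total. Subsets of a countable dense set (or cuts of $\mathbb{Q}$) give only $2^{\aleph_0}$ types, and no pigeonhole over $S(C)$ can produce more than $2^{|C|+|T|}$ extensions of a single $p$. So the assertion ``counting gives more than $2^{|C|+|T|}$ types'' is false.

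The missing idea is that the distinguishing parameters must be far more numerous than $C$, as you correctly allow in part (2), where $\lambda$ grows independently of $|C|$. For instance, fix $\kappa\geq|T|$. Use IP and compactness to get $C=\{a_i:i<\kappa\}$ and, for each $S\subseteq\kappa$, some $b_S$ with $\models\varphi(a_i,b_S)\iff i\in S$; put $D=C\cup\{b_S:S\subseteq\kappa\}$. Each ultrafilter $\mathcal{F}$ on $C$ gives the average type $\{\psi(x,d):d\in D,\ \psi(C,d)\in\mathcal{F}\}$ over $D$, which is finitely satisfiable in $C$. Distinct ultrafilters are separated by some $\varphi(x,b_S)$. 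So there are $2^{2^{\kappa}}$ such types over $D$ but at most $2^{\kappa}$ types over $C$, and some $p\in S(C)$ has more than $2^{\kappa}$ $\ind^u$-free extensions to $D$.

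In the direction NIP $\Rightarrow$ bound, two steps need correcting, although you hedge them with citations.

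\emph{Step 1.} Forking equals dividing is not available over an arbitrary $C$, even in NIP theories; see the formula $a<x$ in Section~\ref{sec: Trees with circularly ordered open cones}. It is also not needed. If a global nonforking extension $q$ of $\tp(a/D)$ were not Lascar-invariant over $C$, then by Fact~\ref{fac: Lasc str type} it would contain $\varphi(x,b)\wedge\neg\varphi(x,b')$ with $b,b'$ in a common $C$-indiscernible sequence. This formula divides over $C$ by NIP.

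\emph{Step 3.} The type $q$ is not determined by $q|_M$: in $\DLO$, the type $x>M$ has two $M$-invariant global extensions. Your ``bounded family of indiscernible sequences'' is also left unspecified. The sharp count comes from the following. First, $q$ is $M$-invariant for any model $M\supseteq C$, since $\Aut(\cU/M)\subseteq\Autf(\cU/C)$. Second, under NIP an $M$-invariant global type is determined by $q^{\otimes\omega}|_M$: if $q\neq q'$ agree there, alternating realizations of $q$ and $q'$ over $Mb$ give an $M$-indiscernible sequence along which some $\varphi(x,b)$ alternates. Taking $|M|=|C|+|T|$ gives at most $2^{|C|+|T|}$ such types.
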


\begin{remark}
		\begin{enumerate}
	\item Assume $T$ is stable, $p$ is a global type, and $A$ a small set. Then $p$ does not fork over $A$ if and only if $p$ is $\acl^{\eq}(A)$-invariant (see e.g.~\cite{pillay1996geometric}).
		\item If $T$ is NIP, we have in fact a stronger version of $\ind^f=\ind^i$:  $p \in S_x(\cU)$ a global type and $A \subseteq \cU$ small, the $p$ does not fork over $A$ if and only if $p$ is $\Aut(\cU/\bdd(A))$-invariant, where $\bdd(A)$ denotes the bounded closure of $A$ in $\cU^{\heq}$ (\cite[Proposition 2.11]{hrushovski2011nip}).
	\end{enumerate}
\end{remark}

\subsection{Extension bases}

\begin{defn}
A small set $A \subseteq \mathbb{M}$ is an \emph{extension base} for $\ind$ if every complete type $p(x)$ over $A$ admits a global extension which is $\ind$-free over $A$. That is,  using right extension, if $a \ind_{A} A$ for all tuples $a$.
A theory $T$ is \emph{$\ind$-extensible} if every small set is an extension base.
\end{defn}

\begin{fact}\label{fac: ext bases stuff}
\begin{enumerate}
\item In any theory $T$, every model $M$ is an extension base for $\ind^u$ (and hence for $\ind^i$ and $\ind^f$).
\item If $T$ is stable (or just simple), then every set is an $\ind^f$-extension base. In particular, if $T$ is stable then every set is an $\ind^i$-extension base.
	\item If $T$ is a dp-minimal expansion of a linear order then all sets are $\ind^i$-extension bases \cite{simon2011dp}.
\end{enumerate}
\end{fact}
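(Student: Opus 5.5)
Each item of Fact~\ref{fac: ext bases stuff} has a standard argument, and I would treat the three items separately, proving (1) and (2) directly and relying on the cited paper for (3).

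For (1), fix a model $M$ and $p(x) \in S(M)$. Since $M \models T$, every formula $\varphi(x) \in p$ is satisfied by some tuple of $M$: it is consistent and has parameters in $M$. So the sets $\varphi(M) = \{m \in M^{x} : \models \varphi(m)\}$ for $\varphi \in p$ have the finite intersection property. Extend them to an ultrafilter $\mathcal{D}$ on $M^{x}$. The average type
\[
 q = \{\psi(x,c) : \psi(M,c) \in \mathcal{D}\}
\]
is then a complete global type. It extends $p$, and it is finitely satisfiable in $M$ by construction. Hence for $a \models q|_{MB}$ we get $a \ind^u_M B$, so $M$ is an $\ind^u$-extension base. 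By Fact~\ref{fac: props of forking etc in NIP}(4), $\ind^u \Rightarrow \ind^i \Rightarrow \ind^f$, so $M$ is also an extension base for $\ind^i$ and $\ind^f$.

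For (2), assume first that $T$ is simple. I would use Kim's theorem that in simple theories forking coincides with dividing. A complete type $p \in S(A)$ cannot divide over $A$: for a formula $\varphi(x,a) \in p$ with $a \in A$, every $A$-indiscernible sequence in $\tp(a/A)$ is constant, so the family $\{\varphi(x,a_i)\}$ is just $\{\varphi(x,a)\}$, which is consistent. Hence $p$ does not fork over $A$. By right extension for $\ind^f$ (Fact~\ref{fac: props of forking etc in NIP}(1)), $p$ then has a global non-forking extension, so $A$ is an $\ind^f$-extension base.

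Now suppose $T$ is stable. Take a global non-forking extension $q$ of $p$. By the Remark above (item (1)), $q$ is $\acl^{\eq}(A)$-invariant. Every $\sigma \in \Autf(\cU/A)$ fixes $\acl^{\eq}(A)$ pointwise. This holds because each element of $\acl^{\eq}(A)$ has boundedly many conjugates over $A$, so its class is determined by $\equiv^L_A$ (Fact~\ref{fac: Lasc str type}). Therefore $q$ is Lascar-invariant over $A$, which shows that $A$ is an $\ind^i$-extension base.

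Item (3) is genuinely deeper, and I would not reprove it; I would quote \cite{simon2011dp}. The expected strategy there is as follows. First, show that every $1$-type over $A$ extends to a global $A$-invariant type, obtained by realising the relevant cut in the linear order; dp-minimality is what makes types essentially determined by cuts. Second, pass to $n$-types by iteratively building invariant extensions, using dp-minimality to control the interaction between coordinates. The main obstacle is this second step: it requires a real structural analysis of dp-minimal ordered theories, which is exactly the content of \cite{simon2011dp}.
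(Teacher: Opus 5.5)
Your proposal is correct. The paper gives no proof of this Fact: it treats (1) and (2) as standard and quotes (3) from \cite{simon2011dp}, as you do. Your arguments for (1) and for the simple case of (2) are exactly the standard ones.

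The only real divergence is the stable $\ind^i$ clause of (2). The shortest derivation from what the paper has already stated goes through NIP. Stable theories are NIP, so $\ind^f=\ind^i$ by Fact~\ref{fac: forking in NIP equals inv}(1), and every $\ind^f$-extension base is then an $\ind^i$-extension base. You instead use two ingredients: non-forking global types are $\acl^{\eq}(A)$-invariant, and $\Autf(\cU/A)$ fixes $\acl^{\eq}(A)$ pointwise.

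That route works, though the second ingredient deserves one more line. Write the imaginary as $a/E$. The relation $E$ induces an $A$-invariant equivalence relation with finitely many classes on the realisations of $\tp(a/A)$; lump all other tuples into one extra class. Minimality of $\equiv^L_A$ among bounded invariant equivalence relations then gives the claim. Your argument is more self-contained, while the NIP route is a one-liner given the stated facts.

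You also misjudge where the difficulty in (3) lies, though this does not affect correctness since you rely on the citation. Passing from $1$-types to $n$-types is routine once the $1$-type statement is known over \emph{arbitrary} base sets. One realises the coordinates one at a time, e.g.\ via left transitivity of $\ind^f$ (which equals $\ind^i$ here, as dp-minimal theories are NIP). So the substantive step is the $1$-type case, not the second step you single out as the main obstacle.
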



The following was shown with $3$ indiscernible sequences in \cite{yaacov2014independence} (in the sense of Fact \ref{fac: Lasc str type}), and improved to $2$ sequences in \cite{simon2020amalgamation}:
\begin{fact}
	If $T$ is $\NTP_2$ and $A$ is an $\ind^f$-extension base, then tuples $a,b$ satisfy $a \equiv^{L}_{A} b$ if and only if there exists $c$ so that both $a,c$ and $c,b$ start infinite indiscernible sequences over $A$. \end{fact}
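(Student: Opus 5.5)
The implication from right to left is immediate from Fact~\ref{fac: Lasc str type}. If $a,c$ and $c,b$ each start an infinite $A$-indiscernible sequence, then $E(a,c)$ and $E(c,b)$ hold. Hence $a \equiv^L_A c \equiv^L_A b$, since $\equiv^L_A$ is the transitive closure of $E$. The work is in the forward direction. There I would follow the strategy of \cite{yaacov2014independence, simon2020amalgamation}: produce a single ``very independent'' witness $c$ that is simultaneously linked to $a$ and to $b$.

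\textbf{Step 1 (a strictly non-forking extension).} Using that $T$ is $\NTP_2$ and $A$ is an $\ind^f$-extension base, I would invoke the Chernikov--Kaplan theory.
\begin{itemize}
\item Forking equals dividing over $A$.
\item Every type over $A$ has a global \emph{strictly} non-forking extension $q$. This means $q$ does not fork over $A$, and whenever $c \models q|_{AB}$ we also have $B \ind^f_A c$.
\end{itemize}
I would choose $q$ extending the Lascar strong type of $a$ (equivalently, of $b$) over $A$. This is possible since non-forking types can be moved inside a Lascar class by $\Autf(\cU/A)$.

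\textbf{Step 2 (key lemma).} Let $d$ be any tuple with $d \equiv^L_A a$, and let $c \models q|_{Ad}$. I would show that $d,c$ start an infinite $A$-indiscernible sequence. The plan is as follows.
\begin{itemize}
\item Build a long Morley sequence $(c_i)_{i<\lambda}$ of $q$ over $Ad$, with $c_0 = c$. It is indiscernible over $Ad$, and hence over $A$.
\item By strictness, $d \ind^f_A (c_i)_{i}$. Since forking equals dividing in $\NTP_2$ over extension bases, this becomes a non-dividing statement.
\item Combine this non-dividing with the Lascar-type condition $d \equiv^L_A c_0$. Via Ramsey and compactness, extract an $A$-indiscernible sequence realizing the type of $d\,c_0\,c_1\ldots$ up to the needed coherence. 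Concretely, show that $\tp(d\,c_{i_1}\ldots c_{i_n}/A)=\tp(c_{j_0}c_{j_1}\ldots c_{j_n}/A)$ for increasing indices, after possibly replacing $d$ by a conjugate over $Ac$.
\end{itemize}
This is where the $\NTP_2$ chain-condition (array) arguments enter. They ensure that conjugating $d$ by an element of $\Autf$ preserves the non-dividing witnessed along $(c_i)$.

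\textbf{Step 3 (conclusion).} Given $a \equiv^L_A b$, take $c \models q|_{Aab}$. Then $c \models q|_{Aa}$ and $c \models q|_{Ab}$. Step 2 applied with $d=a$ and with $d=b$ gives that $a,c$ and $b,c$ both start infinite $A$-indiscernible sequences. Reversing order in the second one (indiscernible sequences can be reindexed by the reverse order via compactness) gives that $c,b$ starts one. Together with the first, this gives the claim.

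\textbf{Main obstacle.} I expect Step 2 to be the hard part: showing that $d$ can be prepended to a Morley sequence of $q$ to give a single indiscernible sequence. Non-forking alone only gives indiscernible sequences after moving the sequence, not $d$. So the proof must use strict non-forking, which gives independence in the \emph{other} direction, together with $d\equiv^L_A c$. I would first try to reduce it as follows.
\begin{itemize}
\item Reduce to the case where $d$ and $c$ are connected by a single $A$-indiscernible sequence (i.e.\ $E(d,c')$ for some $c' \equiv^L_A c$).
\item Iterate along the chain of such connections, using the amalgamation of strict Morley sequences from \cite{simon2020amalgamation} to collapse the chain to length two.
\end{itemize}
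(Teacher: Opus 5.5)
The right-to-left direction is correct. The bookkeeping in Step~3 is also correct, including reversing the second sequence. The forward direction, however, has a genuine gap at Step~2, and Step~2 carries the entire content of the statement. (The paper gives no proof here. It imports the result from \cite{yaacov2014independence} and \cite{simon2020amalgamation}, where it is obtained from an amalgamation, i.e.\ independence, theorem for $\NTP_2$ theories over extension bases.)

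Prepending $d$ to a Morley sequence of $q$ is the argument that works when $q$ is Lascar-invariant over $A$, e.g.\ under NIP, where $\ind^f=\ind^i$. There, take the Morley sequence over $Md$ for a small model $M\supseteq A$; an element of $\Autf(\cU/A)$ taking $c_0$ to $d$ then shows that $(d)+(c_i)_i$ is $A$-indiscernible. A strictly non-forking global type in an $\NTP_2$ theory need not be Lascar-invariant, though. Then its Morley sequences need not be indiscernible. More seriously, the type of a pair $(c_0,c_1)$ is dictated by $q$ and may differ from $\tp(dc/A)$, and then no conjugate of $d$ over $Ac$ can be prepended.

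Here is a concrete instance. Work in the random graph with $A=\emptyset$; it is simple, and all vertices have the same Lascar strong type. Fix a vertex $s$ and let $q(x)$ say that $x\neq e$ and ($xRe\leftrightarrow eRs$) for all $e\in\cU$. This type is non-algebraic, hence strictly non-forking over $\emptyset$. Take $d$ with $dRs$ and $c\models q|_{ds}$. Then $cRd$ and $\neg cRs$, so every $c_1\models q|_{dc}$ is non-adjacent to $c$. Now take a Morley sequence $c_i\models q|_{dsc_{<i}}$ with $c_0=c$. It is $d$-indiscernible, and it satisfies $d\ind^f_\emptyset(c_i)_i$ and $d\equiv^L_\emptyset c_0$, which is exactly the data your third bullet proposes to combine. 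Yet $\tp(dc_0)\neq\tp(c_0c_1)$. (Here $d,c$ do start an indiscernible sequence, e.g.\ a clique, but not one of the shape you construct.)

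There are two smaller inaccuracies. First, strictness gives $dc_{<i}\ind^f_A c_i$, which is the wrong direction for concluding $d\ind^f_A(c_i)_i$; that conclusion would need a separate Kim's-lemma-type argument. Second, $c\models q|_{Ad}$ does not pin down the Lascar strong type of $c$ over $A$. You need $c\models q|_{Md}$ in Step~2, and $c\models q|_{Mab}$ in Step~3, for a small model $M\supseteq A$.

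The missing idea is that the sequence must be built for the pair type $r:=\tp(dc/A)$, not taken from a fixed global type $q$. By Ramsey and compactness it suffices to find $e_0=d$, $e_1=c$, $e_2,\dots$ with $\tp(e_ie_j/A)=r$ for all $i<j$. Constructing $e_{n+1}$ then means realizing $\bigcup_{i\le n}r(e_i,y)$, which is an amalgamation problem. Solving it is where $d\equiv^L_A c$ and $\NTP_2$ are really used. This is what the independence theorem of \cite{yaacov2014independence} (yielding three sequences) and its refinement in \cite{simon2020amalgamation} (yielding two) provide. Your closing plan to ``collapse the chain'' via the amalgamation of \cite{simon2020amalgamation} simply invokes that result, so as written the proposal proves only the easy direction.
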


Forking in NIP (or even $\NTP_2$) theories is well behaved over $\ind^f$-extension bases:
\begin{fact}\label{fac: forking = div NTP2}\cite[Theorem 1.2]{chernikov2012forking}
Assume $T$ is $\NTP_2$. Then for a small set $A$, the following are equivalent:
\begin{enumerate}
\item $A$ is an extension base for $\ind^f$; 
	\item $\ind^f$ satisfies left extension over $A$;
	\item  for any formula $\varphi(x,a) \in \mathcal{L}(\mathbb{M})$, it divides over $A$ if and only if it forks over $A$.
\end{enumerate}
In particular, assuming NIP, (1)$\Leftrightarrow$(2) for $\ind^i$.
\end{fact}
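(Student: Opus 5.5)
The plan is to prove the cycle (3)$\Rightarrow$(1)$\Rightarrow$(3) and, separately, (1)$\Leftrightarrow$(2). The NIP clause then follows at once, because $\ind^i=\ind^f$ under NIP (Fact~\ref{fac: forking in NIP equals inv}(1)).

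\textbf{(3)$\Rightarrow$(1).} Let $p\in S(A)$. No formula of $p$, which has parameters in $A$, divides over $A$, since any $A$-indiscernible sequence in those parameters is constant. So by (3) no formula of $p$ forks over $A$, and $p$ does not fork over $A$. Forking formulas form an ideal, so the partial type $p(x)\cup\{\neg\varphi(x,b):\varphi(x,b)\text{ forks over }A\}$ is consistent. Any completion of it is a global extension of $p$ that does not fork over $A$.

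\textbf{(2)$\Rightarrow$(1).} We have $\emptyset\ind^f_A A$ trivially. Left extension applied to an arbitrary $c$ gives $c'\equiv_A c$ with $c'\ind^f_A A$. By invariance $c\ind^f_A A$, and right extension then yields a global nonforking extension of $\tp(c/A)$.

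\textbf{(1)$\Rightarrow$(2).} Assume $a\ind^f_A b$ and fix $c$. Using (1) and right extension, take a global type $q\supseteq\tp(c/Aa)$ that does not fork over $A$. We want a realization $c'$ of $q|_{Aab}$ with $ac'\ind^f_A b$. Suppose instead that $\tp(ac'/Ab)$ forks for every such $c'$. By compactness there is then a formula $\theta(x,y)\in\tp(ac/A a)$-type witnessing this, with $\theta(x,y)\wedge q(y)\vdash$ a disjunction of formulas dividing over $A$ with parameters $b$. Projecting out $y$ would give a forking formula in $\tp(a/Ab)$, which contradicts the hypothesis. This projection step needs the equivalence of forking and dividing from (3), and that is why I route the argument through (3).

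\textbf{(1)$\Rightarrow$(3).} This is the main obstacle. Work in an $\NTP_2$ theory and follow the strict-invariance approach.
\begin{enumerate}
\item Show that over an extension base $A$, every type $\tp(c/A)$ has a global extension $q$ that is \emph{strictly} non-forking. This means $q$ does not fork over $A$, and for every $B\supseteq A$ and $d\models q|_B$, also $\tp(B/Ad)$ does not fork over $A$. I would build $q$ by compactness from a non-forking extension, requiring the "reverse" non-forking conditions. The consistency of those conditions uses $\NTP_2$ via an array/Ramsey argument; this is the heart of the proof.
\item Prove a Kim's-lemma analogue: if $\psi(x,c)$ divides over $A$, then it divides along every strict Morley sequence in $\tp(c/A)$. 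Here $\NTP_2$ is used to rule out an inp-pattern, which would otherwise arise from a sequence witnessing dividing together with a strict Morley sequence along which $\psi$ is consistent.
\item Conclude. Suppose $\varphi(x,b)\vdash\bigvee_{i<n}\psi_i(x,c_i)$, with each $\psi_i$ dividing over $A$. Take a strict Morley sequence $(b^jc_0^j\ldots c_{n-1}^j)_j$ in $\tp(bc_0\ldots c_{n-1}/A)$. Each $\psi_i$ divides along its coordinate, so by pigeonhole $\{\varphi(x,b^j)\}_j$ is inconsistent. Hence $\varphi(x,b)$ divides over $A$.
\end{enumerate}
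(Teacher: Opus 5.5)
The paper gives no proof of this Fact; it quotes it from Chernikov--Kaplan. So I compare your sketch with the standard strict-Morley-sequence argument. Your (3)$\Rightarrow$(1), your (2)$\Rightarrow$(1), and the NIP clause via $\ind^i=\ind^f$ are fine, and Steps 2--3 of your (1)$\Rightarrow$(3) (Kim's lemma for strict Morley sequences via an inp-pattern, then pigeonhole) have the right shape. There are, however, two genuine problems.

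\textbf{Step 1 of (1)$\Rightarrow$(3) is not proved.} You only assert that every $p\in S(A)$ has a global strictly non-forking extension "by an array/Ramsey argument using $\NTP_2$". By compactness this amounts to finding, for each finite $b$, some $c\models p$ with $c\ind^f_A b$ \emph{and} $b\ind^f_A c$ simultaneously. The extension base property plus right extension give each of these separately, but not both at once, and your sketch names no mechanism that does. Steps 2--3 rest on this, and through (3) so does your (1)$\Rightarrow$(2), so the main implication is missing.

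One way to supply it is to pass to a model. Pick $c_0\models p$ and, using that $A$ is an extension base together with right extension, a model $M\supseteq A$ with $M\ind^f_A bc_0$. Take $q\supseteq\tp(c_0/M)$ strictly non-forking over $M$, and let $c\models q|_{Mb}$. Then:
\begin{itemize}
\item $c\ind^f_M b$ and $M\ind^f_A b$ give $cM\ind^f_A b$ by left transitivity;
\item $b\ind^f_M c$ and $M\ind^f_A c$ (the latter since $c\equiv_M c_0$) give $bM\ind^f_A c$.
\end{itemize}
So everything reduces to the existence of strictly non-forking extensions over \emph{models}. That is a separate lemma you would still have to prove.

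Alternatively, avoid strict types over $A$ altogether. Once forking equals dividing over models is known, suppose $\varphi(x,b)\vdash\bigvee_{i<n}\psi_i(x,c_i)$ with each $\psi_i(x,c_i)$ dividing over $A$. Choose a model $M\supseteq A$ with $M\ind^f_A bc_0\ldots c_{n-1}$. Since $\tp(M/Ac_i)$ does not divide over $A$, a sequence witnessing dividing of $\psi_i(x,c_i)$ can be replaced by an $Ac_i$-conjugate that is $M$-indiscernible. Hence $\varphi(x,b)$ forks over $M$, so it divides over $M$, and therefore divides over $A$.

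\textbf{Your (1)$\Rightarrow$(2) fails as written.} A global extension of $\tp(c/Aa)$ that does not fork over $A$ need not exist: take $c=a\notin\acl(A)$. In any case you only need $c'\equiv_{Aa}c$. Moreover, existentially projecting out $y$ from a forking or dividing formula need not give a forking formula; consider $y=b$ with $b\notin\acl(A)$.

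The correct argument goes as follows. Suppose $\tp(c/Aa)\cup\{\neg\psi(a,y,b):\psi(x,y,b)\text{ forks over }A\}$ is inconsistent. Since forking formulas form an ideal, some $\chi(y,a)\in\tp(c/Aa)$ implies a single $\psi(a,y,b)$ with $\psi(x,y,b)$ forking over $A$. By (3), $\psi$ $k$-divides along some $A$-indiscernible sequence $(b_j)$ with $b_0=b$. Then the formula $\exists y\,\chi(y,x)\wedge\forall y\,(\chi(y,x)\to\psi(x,y,b))$ lies in $\tp(a/Ab)$. It $k$-divides along the same sequence, which contradicts $a\ind^f_A b$.
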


\subsection{Independent sequences}

In this section we let $\ind$ be a standard preindependence relation, unless specified further, and $I$ a linear order.

\begin{definition}
	Given $B \subseteq C$, we say that $\langle A_i : i \in I \rangle$ is an \emph{$\ind_{B}$-sequence over $C$} if $A_i \ind_{B} A_{<i} C$ for all $i \in I$.
\end{definition}

The following are some natural operations on $\ind$-sequences that we will use freely:
\begin{remark}\label{rem: grouping ind seq}
	Assume $B \subseteq C$.
	\begin{enumerate}
		\item If $\langle A_i : i \in I \rangle$ is a $\ind_{B}$-sequence over $C$ and $A'_i \subseteq A_i$ for all $i \in I$, then $\langle A'_i : i \in I \rangle$ is a $\ind_{B}$-sequence over $C$ (by monotonicity of $\ind$).
		\item If $\langle A_i : i \in I \rangle$ is a $\ind_{B}$-sequence over $C$ and $I_j$ is a convex subset of $I$ with $I_j < I_{j'}$ for all $j < j' \in J$, then $\langle A_{I_j} : j \in J \rangle$ is a $\ind_{B}$-sequence over $C$ (by finite character, base monotonicity and transitivity).
		\item Conversely, if $\langle A_\alpha : \alpha \in I \rangle$ is a $\ind_{B}$-sequence over $C$ and $A_\alpha = \bigcup_{i \in I_\alpha}A'_{\alpha,i}$ with $\langle A'_{\alpha,i} : i \in I_\alpha \rangle$ a $\ind_{B}$-sequence over $C A_{<\alpha}$ for all $\alpha \in I$, then  $\langle A'_{\alpha,i} : (\alpha,i) \in \sum_{\beta \in I}I_\beta  \rangle$ is a $\ind_{B}$-sequence over $C$, where $\sum_{\beta \in I}I_\beta$ is the lexicographic sum (i.e.~$\sum_{\beta \in I}I_\beta = \{(\alpha,i) : \alpha \in I \land i \in I_{\alpha}\}$ and $(\alpha_1,i_1) <_{\lex} (\alpha_2,i_2)$ if either $\alpha_1 < \alpha_2$ in $I$, or $\alpha_1 = \alpha_2$ and $i_1 < i_2$ in $I_{\alpha}$). This is again not hard to check using finite character, base monotonicity and transitivity.
	\end{enumerate}	
	\end{remark}

In the following lemma, (1) is an analog of  \cite[Part I Lemma
2.6]{shelah2006monadic} and \cite[Proposition 2.10]{braunfeld2021characterizations} and (2) is an analog of \cite[Lemma 2.20]{braunfeld2021characterizations}, but with a general pre-independence relation instead of $\ind^u$ so our proof is different.

\begin{lemma}\label{lem: ext base for ind seq}
\begin{enumerate}
	\item Suppose $C \supseteq B$, $\alpha$ is an ordinal and  $(A_i : i < \alpha)$ is an $\ind_{B}$-sequence over $C$. Then for every $D \supseteq C$ there exists $D' \equiv_{C} D$ so that $(A_i : i < \alpha)$ is an $\ind_{B}$-sequence over $D'$.
	\item Suppose $I$ is infinite and $\langle A_i : i \in I \rangle$ is an $\ind_{B}$-sequence over $B$ which is also $B$-indiscernible. Then for any $C \supseteq B$ there is $C' \equiv_{B} C$ so that $\langle A_i : i \in I \rangle$ is a $C'$-indiscernible $\ind_{B}$-sequence over $C'$.
\end{enumerate}
	
\end{lemma}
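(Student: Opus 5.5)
The plan for (1) is a transfinite construction using only invariance and right extension. For (2), I would stretch the sequence, apply (1), and then extract an indiscernible sequence.

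\emph{Part (1).} By induction on $i \leq \alpha$ I will build tuples $D_i \equiv_C D$ satisfying two conditions. First, $A_j \ind_B A_{<j} C D_i$ for all $j < i$. Second, the $D_i$ cohere: $D_{i'} \equiv_{C A_{<i}} D_i$ whenever $i < i'$. Start with $D_0 = D$.

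For the successor step, suppose $D_i$ is given. Since $A_i \ind_B A_{<i} C$, right extension (with $b = A_{<i}C$ and $c = D_i$) gives $D_{i+1} \equiv_{A_{<i} C} D_i$ with $A_i \ind_B A_{<i} C D_{i+1}$. Let $\sigma \in \Aut(\cU/ C A_{<i})$ send $D_i$ to $D_{i+1}$. Then $\sigma$ fixes $B$, $A_j$ and $A_{<j}$ for every $j<i$, so invariance transfers the condition $A_j \ind_B A_{<j} C D_i$ to $D_{i+1}$. Coherence holds by construction.

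For a limit $\lambda$, the types $\tp(D_i / C A_{<i})$ for $i<\lambda$ form an increasing chain. I let $D_\lambda$ realize their union, using saturation. For $j<\lambda$ we then have $D_\lambda \equiv_{C A_{\le j}} D_{j+1}$, and invariance via an automorphism fixing $C A_{\le j}$ gives $A_j \ind_B A_{<j} C D_\lambda$. Finally take $D' = D_\alpha$.

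\emph{Part (2).} I first make the sequence long. By strong finite character and monotonicity, failure of $A_i \ind_B A_{<i}$ is witnessed by a formula over $B$ together with finitely many of the $A_j$, $j<i$. Hence, for a $B$-indiscernible sequence, being an $\ind_B$-sequence over $B$ depends only on its EM-type over $B$. By compactness I replace $I$ with a $B$-indiscernible $\ind_B$-sequence $\langle A_i : i < \kappa\rangle$ having the same EM-type over $B$, where $\kappa$ is large enough for Erdős–Rado relative to $|C|+|T|+|A_i|$.

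Next I apply (1) with $C := B$ and $D := C$. This gives $C'' \equiv_B C$ such that $\langle A_i : i<\kappa\rangle$ is an $\ind_B$-sequence over $C''$. Then I extract a $C''$-indiscernible sequence $\langle A'_i : i \in I\rangle$ that is finitely based on the long one over $C''$. Two properties should transfer.

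\begin{enumerate}
\item $B$-indiscernibility: since the long sequence was already $B$-indiscernible with the EM-type of $I$, the new sequence satisfies $\langle A'_i\rangle \equiv_B \langle A_i : i \in I\rangle$.
\item The $\ind_B$-sequence property over $C''$: if $A'_i \nind_B A'_{<i} C''$, then strong finite character and monotonicity give a formula over finitely many $A'_{j}$ ($j<i$) and finitely many elements of $C''$ witnessing this. That same finite configuration occurs, with the same type over $C''$, among the $A_{j}$'s in the long sequence. An automorphism fixing $C''$ and invariance then produce a failure there, a contradiction.
\end{enumerate}

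Finally, take $\tau \in \Aut(\cU/B)$ with $\tau(A'_i) = A_i$ for $i \in I$, and set $C' := \tau(C'')$. Then $C' \equiv_B C$, and by invariance $\langle A_i : i\in I\rangle$ is $C'$-indiscernible and an $\ind_B$-sequence over $C'$.

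The main obstacle is the bookkeeping in the extraction step. Independence involves infinite sets such as $A_{<i}C''$, so I must check carefully that strong finite character together with monotonicity reduces both properties in item 2 of part (2) to types of finite subconfigurations. Only then do they pass to sequences finitely based on the long sequence over $C''$.
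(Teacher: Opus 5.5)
Your proposal is correct and follows the paper's proof essentially step by step. In (1) it is a transfinite construction by right extension, with coherent types realized at limit stages; your shifted indexing just spares you the right-extension step the paper performs at limits. In (2) it is the same sequence of moves: stretch to length $\kappa$, apply (1), extract a $C''$-indiscernible sequence by Erd\H{o}s--Rado, and conjugate back over $B$.

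On the point you flag, both transfers actually use right finite character: $a\nind_B b$ implies $a\nind_B b_0$ for some finite $b_0\subseteq b$. This does not follow from strong finite character and monotonicity alone, but it does once you add right extension and invariance. Let $\varphi(x)$, with parameters from $Bb_0$ for a finite $b_0\subseteq b$, be the strong-finite-character witness for $a\nind_B b$, and suppose $a\ind_B b_0$. Choose $b'\equiv_{Bb_0}b$ with $a\ind_B b_0b'$, then move $b'$ to $b$ by an automorphism over $Bb_0$. This produces a realization of $\varphi$ that is independent from $b$ over $B$, a contradiction.
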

\begin{proof}

(1) By transfinite induction on $i < \alpha$ we choose $D_i$ so that $A_i \ind_{B} A_{<i} D_i$, $D_0 \equiv_{C} D$ and $D_i \equiv_{C A_{\leq j}} D_j$ for all $j \leq i$.

For $i=0$, as $A_0 \ind_{B} C$ by assumption, we find $D_0 \equiv_{C} D$ with $A_0 \ind_{B} D_0$ by right extension. 

Assume $0 < i = i' + 1 <\alpha$ is a successor ordinal. Let $D_{i'}$ be given by the inductive assumption. By assumption we have $A_{i} \ind_{B} A_{\leq i'} C$, so by right extension we find $D_i \equiv_{A_{\leq i'} C} D_{i'}$ with $A_i \ind_{B} A_{\leq i'} D_i$. By invariance and inductive assumption on $D_{i'}$ we still have $D_i \equiv_{A_{\leq j} C} D_{j}$ for all $j \leq i$.

Assume $0<i < \alpha$ is a limit ordinal. Let $D'_{i} \models \bigcup_{j < i} \tp(D_j/ C A_{\leq j})$ (by the inductive assumption we have $\tp(D_{j_1}/CA_{\leq {j_1}}) \subseteq \tp(D_{j_2}/CA_{\leq {j_2}})$ for all $j_1 \leq j_2 < i$, so the union is consistent). By assumption, $A_{i} \ind_{B} A_{<i} C$, so by right extension we find $D_i \equiv_{A_{<i} C} D'_i$ with $A_i \ind_{B} A_{<i} D_i$. By the choice, for any $j < i$ we have $D_i \equiv_{A_{\leq j} C} D'_i \equiv_{A_{\leq j} C} D_{j}$.

Having carried out the induction, if $\alpha = \alpha'+1$ is a successor then we already have $D_{\alpha'} \equiv_{C} D$ and  $A_i \ind_{B} A_{<i} D_{\alpha'}$ for all $i < \alpha$ (by invariance of $\ind$, as $A_i \ind_{B} A_{<i} D_i$ by induction and  $A_i A_{<i} B D_i \equiv A A_{<i} B D_{\alpha'}$).

If $\alpha$ is a limit, we let $D_{\infty} \models \bigcup_{i < \alpha} \tp(D_i/ C A_{\leq i})$ (consistent, as above; in particular $D_{\infty} \equiv_{C} D$). We claim that $A_i \ind_{B} A_{<i} D_{\infty}$ for all $i < \alpha$. Suppose that $A_i \nind_{B} A_{<i} D_{\infty}$ for some $i < \alpha$. As $D_{\infty} \equiv_{C A_{\leq i}} D_i$, by invariance this implies $A_i \nind_{B} A_{<i} D_{i}$, contradicting the choice of $D_i$.

(2) Let $\kappa$ be a cardinal sufficiently large with respect to $|C|$. As $I$ is infinite, by Ramsey and compactness, let $\langle A'_i : i < \kappa \rangle$ be a $B$-indiscernible sequence so that $\EM(\langle A_i : i \in I \rangle / B) \subseteq \EM(\langle A'_i : i < \kappa \rangle / B)$. In particular, using $B$-indiscernibility of $\langle A_i : i \in I \rangle$, for any $n < \omega$ and $i_0 < \ldots < i_{n-1} < \kappa$ we have $A'_{i_0} \ldots A'_{i_{n-1}} \equiv_{B} A_{j_0} \ldots A_{j_{n-1}} $ for some/any $j_0 < \ldots < j_{n-1} \in I$. Then, by invariance and finite character of $\ind$, we still have that $\langle A'_i : i < \kappa \rangle$ is an $\ind_{B}$-sequence over $B$. Applying part (1), we find some $C' \equiv_{B} C$ so that $\langle A'_i : i < \kappa \rangle$ is a $\ind_{B}$-sequence over $C'$. As $\kappa \gg |C'|$, by Erd\H{o}s-Rado and compactness we find a $C'$-indiscernible sequence $\langle A''_i : i \in I \rangle$ so that for every $n < \omega$ and $i_0 < \ldots < i_{n-1} \in I$ we have $A''_{i_0} \ldots A''_{i_{n-1}} \equiv_{C'} A'_{j_0} \ldots A'_{j_{n-1}} $ for some $j_0 < \ldots < j_{n-1} < \kappa$. By invariance and finite character of $\ind$ again, we still have that $\langle A''_i : i \in I \rangle$ is an $\ind_{B}$-sequence over $C'$. We still have $\EM(\langle A''_i : i \in I \rangle / B) \subseteq \EM(\langle A_i : i \in I  \rangle / B)$, so $\langle A''_i : i \in I \rangle \equiv_{B} \langle A_i : i \in I \rangle $ (by indiscernibility of $ \langle A_i : i \in I \rangle$ over $B$). Taking $C''$ so that $\langle A''_i : i \in I \rangle C' \equiv_{B} \langle A_i : i \in I \rangle C''$, we get that $C'' \equiv_{B} C$,  $\langle A_i : i \in I \rangle$ is $C''$-indiscernible and $\langle A_i : i \in I \rangle$ is an $\ind_{B}$-sequence over $C''$ (by invariance of $\ind$).
%
%
%
\end{proof}

The following is a standard argument for the existence of Morley sequences:
\begin{lemma}\label{lem: existence of ind MS}
	If $a \ind_{B} C$, then for any small linear order $I$ there exists an $\ind_{B}$-sequence $\langle a_i : i \in I \rangle$ over $C$ with $a_i \equiv_{C} a$ which is moreover $C$-indiscernible.
\end{lemma}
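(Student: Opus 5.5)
The plan is to build a long $\ind_B$-sequence of realizations of $\tp(a/C)$ by transfinite induction using right extension. Then we extract an indiscernible sequence with Erd\H{o}s--Rado, and transfer the independence to it using invariance and finite character, as in the proof of Lemma~\ref{lem: ext base for ind seq}(2).

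\textbf{Step 1.} Fix a cardinal $\kappa$ that is large compared with $|C|$ and $|I|$. By transfinite induction on $i < \kappa$ we choose $a_i \equiv_C a$ with $a_i \ind_B a_{<i} C$. Given $a_{<i}$, we have $a \ind_B C$, so right extension yields $D \equiv_{BC} a_{<i}$ with $a \ind_B C D$. Pick an automorphism $\sigma$ fixing $C$ (which contains $B$) and sending $D$ to $a_{<i}$, and set $a_i := \sigma(a)$. By invariance, $a_i \ind_B C a_{<i}$, and clearly $a_i \equiv_C a$. This gives an $\ind_B$-sequence $\langle a_i : i<\kappa\rangle$ over $C$ all of whose members realize $\tp(a/C)$. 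No limit-stage issue arises, since at each stage we only need the single condition $a_i \ind_B a_{<i}C$.

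\textbf{Step 2.} By Erd\H{o}s--Rado and compactness, since $\kappa \gg |C|$, there is a $C$-indiscernible sequence $\langle a'_i : i \in I\rangle$ with the following property: for every $n$ and $i_0<\dots<i_{n-1}$ in $I$, there are $j_0<\dots<j_{n-1}<\kappa$ with $a'_{i_0}\dots a'_{i_{n-1}} \equiv_C a_{j_0}\dots a_{j_{n-1}}$. In particular each $a'_i \equiv_C a$.

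\textbf{Step 3.} We check that $\langle a'_i : i \in I\rangle$ is an $\ind_B$-sequence over $C$. Suppose instead that $a'_i \nind_B a'_{<i} C$ for some $i$. Finite character of $\ind$, applied to the right-hand side together with monotonicity, gives finitely many indices $i_0<\dots<i_{n-1}<i$ with $a'_i \nind_B a'_{i_0}\dots a'_{i_{n-1}} C$. (I expect this step to need the most care. If finite character is only stated on the left, we first use the formula $\varphi \in \tp(a'_i/B a'_{<i} C)$ it provides, which mentions only finitely many parameters, and then argue via invariance that the failure is witnessed by a finite piece of the right-hand side.) Now choose $j_0<\dots<j_{n-1}<j$ with $a'_{i_0}\dots a'_{i_{n-1}}a'_i \equiv_C a_{j_0}\dots a_{j_{n-1}} a_j$. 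Invariance then gives $a_j \nind_B a_{j_0}\dots a_{j_{n-1}} C$. Monotonicity turns this into $a_j \nind_B a_{<j} C$, contradicting Step 1.

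If one prefers to avoid finite character on the right, the same conclusion follows from Remark~\ref{rem: grouping ind seq}-style reasoning, in the same way the paper handles the analogous point in Lemma~\ref{lem: ext base for ind seq}(2).
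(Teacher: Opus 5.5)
Your argument is correct and follows the same route as the paper: a long $\ind_B$-sequence of realizations of $\tp(a/C)$ built by transfinite induction, then Erd\H{o}s--Rado extraction, then transfer of independence via invariance and finite character. Your Step~1 is slightly cleaner than the paper's. The paper extends from the previous element $a_{i'}$ while maintaining the coherence condition $a_i \equiv_{Ca_{<j}} a_j$, so it needs a separate limit-stage argument; applying right extension to $a \ind_B C$ with all of $a_{<i}$ as the new parameter, then pulling back by an automorphism over $C$, makes limit stages trivial.

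The paper also uses the right-hand finite character of Step~3 without comment. However, deriving it from the left-handed strong finite character needs right extension, not just invariance as your parenthetical suggests. Let $\varphi \in \tp(a'_i/Ba'_{<i}C)$ witness $a'_i \nind_B a'_{<i}C$, with parameters in a finite piece $\bar e$ of $a'_{<i}C$, and suppose $a'_i \ind_B \bar e$. Right extension followed by an automorphism over $B\bar e$ then produces a realization of $\varphi$ that is independent over $B$ from all of $a'_{<i}C$, a contradiction.
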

\begin{proof}
	Fix a cardinal $\kappa$ sufficiently large with respect to $|C|$. We can choose $\langle a_i : i < \kappa \rangle$ so that $a_0 = a$, $a_i \ind_{B} a_{<i} C$ and $a_i \equiv_{C a_{<j}} a_{j}$ for all $i < j < \kappa$. For $0 < i = i'+1 < \kappa$ a successor, as $a_{i'} \ind_{B} a_{<i'}C$ by inductive assumption, by right extension (and automorphism)  we find $a_i \equiv_{a_{<i'} C} a_{i'}$ with $a_i \ind_{B} a_{i'} a_{<i'} C$. For $0 <i<\kappa$ limit, we let $a_i \models \bigcup_{j < i} \tp(a_j/a_{<j}C)$. These types are increasing, hence the union is consistent. And if $a_i \nind_{B} a_{<i}$ then by finite character and monotonicity $a_i \nind_{B} a_{<j}$ for some $j < i$, hence $a_{j} \nind_{B} a_{<j}$ by invariance as $a_i \equiv_{C a_{<j}} a_j$, a contradiction.
	
	So $\langle a_i : i < \kappa \rangle$ is an $\ind_{B}$-sequence over $C$ with $a_i \equiv_{C} a$. By Erd\H{o}s-Rado and compactness we find a $C$-indiscernible sequence $\langle a'_i : i  \in I \rangle$ so that for every $n < \omega$ and $i_0 < \ldots < i_{n-1} \in I$ we have $a'_{i_0} \ldots a'_{i_{n-1}} \equiv_{C} a_{j_0} \ldots a_{j_{n-1}} $ for some $j_0 < \ldots < j_{n-1} < \kappa$. In particular $\langle a'_i : i  \in I \rangle$ is still an $\ind_{B}$-sequence over $C$ (by invariance and finite character of $\ind$) and $a'_i \equiv_{C} a_i$.
\end{proof}

\subsection{Full sets}
Again, we let $\ind$ be a standard preindependence relation.

\begin{definition}\label{def: full sets}
	We say that $C \supseteq A$ is \emph{$\ind$-full over $A$} if for any $D \supseteq C$ and any $a \equiv_{C} a'$ with $a \ind_{A} D$ and $a' \ind_{A} D$, we must have $a \equiv_{D} a'$ (i.e.~every type over $C$ admits a unique  global $\ind_{A}$-free extension).
\end{definition}

\begin{remark}\label{rem: full sets aut}
	Note that if $C \supseteq A$ is $\ind$-full over $A$ and $C' \equiv_{A} C$, then $C'$ is also $\ind$-full over $A$ (by invariance of $\ind$).\end{remark}

The following lemmas are straightforward generalizations of some lemmas in \cite{shelah2006monadic} from $\ind^u$ to arbitrary (and sometimes bounded) standard preindependence relations. See also the presentation in \cite{braunfeld2021characterizations}. The following lemma generalizes \cite[Part I Lemma 1.5]{shelah2006monadic}.
\begin{lemma}\label{lem: existence of full sets}
	If $\ind$ is bounded and $C \supseteq A$ contains a representative of every Lascar strong type (of a finite tuple) over $A$, then $C$ is $\ind$-full over $A$ (so, assuming NIP, $C$ is also $\ind^f$-full over $A$). In particular, this happens if there is some model $M \prec \cU$ with $A \subseteq M \subseteq C$ and every $p \in S_{<\omega}(M)$ is realized in $C$. 
\end{lemma}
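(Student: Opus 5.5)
Since $\ind$ is bounded, Fact~\ref{fac: props of forking etc in NIP}(3) gives $\ind \Rightarrow \ind^i$. So the proof reduces to showing that a global type which is Lascar-invariant over $A$ is determined by its restriction to $C$. Once that is done, the parenthetical for NIP follows because $\ind^f = \ind^i$ is bounded (Fact~\ref{fac: forking in NIP equals inv}).

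Fix $D \supseteq C$ and $a \equiv_C a'$ with $a \ind_A D$ and $a' \ind_A D$. By Fact~\ref{fac: props of forking etc in NIP}(3), $a \ind^i_A D$ and $a' \ind^i_A D$. So there are global types $p \supseteq \tp(a/D)$ and $p' \supseteq \tp(a'/D)$, both Lascar-invariant over $A$, and $p|_C = \tp(a/C) = \tp(a'/C) = p'|_C$. We must show $p|_D = p'|_D$. Take a formula $\varphi(x,d)$ with $\varphi(x,y) \in \mathcal{L}(A)$ and $d$ a finite tuple from $D$; it is enough to handle formulas of this form, since parameters from $C$ can be absorbed into $d$. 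By hypothesis there is $c \in C$ with $c \equiv^L_A d$. Lascar-invariance of $p$ gives $\varphi(x,d) \in p \iff \varphi(x,c) \in p$, and likewise for $p'$. Since $\varphi(x,c)$ is over $C$ and $p|_C = p'|_C$, we get $\varphi(x,d) \in p \iff \varphi(x,d) \in p'$. Hence $\tp(a/D) = \tp(a'/D)$, i.e.\ $a \equiv_D a'$.

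For the ``in particular'' clause, suppose $A \subseteq M \subseteq C$ with every type in $S_{<\omega}(M)$ realized in $C$, and let $d$ be a finite tuple. Choose $c \in C$ with $c \equiv_M d$. Since $M \supseteq A$ is a small model, there is $\sigma \in \Aut(\cU/M) \subseteq \Autf(\cU/A)$ with $\sigma(d) = c$, so $c \equiv^L_A d$. Thus $C$ contains a representative of every Lascar strong type over $A$, and the first part applies.

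The only step needing care is the appeal to $\ind \Rightarrow \ind^i$ for bounded $\ind$, which is quoted from Fact~\ref{fac: props of forking etc in NIP}(3). With that in hand the argument is immediate, so I expect no serious obstacle.
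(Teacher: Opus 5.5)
Your proof is correct and follows essentially the same route as the paper. Both reduce to $\ind^i$ via $\ind \Rightarrow \ind^i$ for bounded $\ind$, then transfer a formula $\varphi(x,d)$ to $\varphi(x,c)$ with $c \in C$, $c \equiv^L_A d$, using Lascar invariance of the two global extensions; the paper argues by contradiction and you argue directly. Your check of the ``in particular'' clause via $\Aut(\cU/M) \subseteq \Autf(\cU/A)$ is also correct, and it fills in a step the paper leaves implicit.
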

\begin{proof}
	 Assume $D \supseteq C$ and $a \equiv_{C} a'$ with $a \ind^i_{A} D$ and $a' \ind^i_{A} D$ are so that $a \not \equiv_{D} a'$, say $\models \varphi(a, b) \land \neg \varphi(a',b)$ for some $\varphi(x,y) \in \Lcal$ and $b$ a finite tuple from $D$. Let $p,p'$ be global types Lascar-invariant over $A$ with $p \supseteq \tp(a/D), p' \supseteq \tp(a'/D)$, so $\varphi(x,b) \in p, \neg \varphi(x,b) \in p'$. By assumption on $C$, there is some tuple $b'$ in $C$ with $b' \equiv^L_{A} b$, so by $A$-Lascar invariance of $p,p'$ we still have $\varphi(x,b') \in p, \neg \varphi(x,b') \in p'$, so $\models \varphi(a,b') \land \neg \varphi(a',b')$, contradicting $a \equiv_{C} a'$.
\end{proof}

\begin{remark}
	It follows that a standard preindependence relation $\ind$ is bounded if and only if for every small set $A$ there is a small set $C \supseteq A$ which is $\ind$-full over $A$, in which case we can choose $C$ with $|C| \leq 2^{|A| + |T|}$.
\end{remark}

The following lemma generalizes \cite[Part I Observation 1.6]{shelah2006monadic}.
\begin{lemma}\label{lem: split element in ind-seq}
	If $D \supseteq C$ is $\ind$-full and $\langle A, (B_1, B_2) \rangle $ is an $\ind_{C}$-sequence over $D$, then $\langle A, B_1, B_2 \rangle$ is an $\ind_{C}$-sequence over $D$ if and only if $\langle B_1, B_2 \rangle$ is an $\ind_{C}$-sequence over $D$.
\end{lemma}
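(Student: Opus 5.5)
Unpacking the definitions, the hypothesis that $\langle A,(B_1,B_2)\rangle$ is an $\ind_C$-sequence over $D$ means
\[
A\ind_C D \qquad\text{and}\qquad B_1B_2\ind_C AD .
\]
The claim is then
\[
\bigl(A\ind_C D,\; B_1\ind_C AD,\; B_2\ind_C AB_1D\bigr)\iff\bigl(B_1\ind_C D,\; B_2\ind_C B_1D\bigr).
\]
The plan is to get one direction from monotonicity and the other from fullness of $D$ applied to the type of $B_2$.

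The forward direction is immediate from monotonicity. From $B_1\ind_C AD$ we get $B_1\ind_C D$, and from $B_2\ind_C AB_1D$ we get $B_2\ind_C B_1D$.

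For the converse, assume $B_1\ind_C D$ and $B_2\ind_C B_1D$. We already have $A\ind_C D$, and $B_1\ind_C AD$ follows from $B_1B_2\ind_C AD$ by monotonicity. So the only thing left is $B_2\ind_C AB_1D$, and here fullness is the key tool.

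\begin{enumerate}
\item Apply right extension to $B_2\ind_C B_1D$. This gives $A'\equiv_{B_1D}A$ with $B_2\ind_C B_1DA'$.
\item Let $D_1 \eqdef DB_1$. Both $A$ and $A'$ are free from things over $C$, and we want to use fullness over the set $D_1$. The obstacle is that fullness is stated for $D$, not for $DB_1$. So instead we swap roles and work with the type of $B_2$ over $D$, where fullness is available directly.
\item Since $B_1B_2\ind_C AD$, we may take the element to be the tuple $B_1B_2$. Compare the two triples $(B_1B_2, AD)$ and $(B_1B_2', AD)$, where $B_2'$ is obtained as follows. By right extension, and noting $A'B_1\equiv_D AB_1$, there is an automorphism $\sigma$ fixing $DB_1$ with $\sigma(A')=A$. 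Set $B_2' \eqdef \sigma(B_2)$. By invariance, $B_2'\ind_C B_1DA$, and $B_2'\equiv_{B_1D}B_2$.
\item Now $B_1B_2\equiv_D B_1B_2'$. Both tuples are $\ind_C$-free over $AD\supseteq D$: the first by hypothesis, and the second by left transitivity. For the second, use $B_2'\ind_C AB_1D$, so base monotonicity gives $B_2'\ind_{CB_1}AD$, and combine with $B_1\ind_C AD$. This needs only transitivity in the form given in the paper.
\item Since $D$ is $\ind$-full over $C$, we conclude $B_1B_2\equiv_{AD}B_1B_2'$. Therefore $B_2B_1AD\equiv B_2'B_1AD$ over $C$, and invariance transfers $B_2'\ind_C AB_1D$ to $B_2\ind_C AB_1D$.
\end{enumerate}

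The main obstacle is step 4. To verify freeness of $B_1B_2'$ over $AD$, base monotonicity gives $B_2'\ind_{CB_1}AD$ from $B_2'\ind_C B_1AD$, and left transitivity over $C$ then combines this with $B_1\ind_C AD$ to give $B_1B_2'\ind_C AD$. One also needs to check that fullness applies to tuples rather than just finite tuples. This follows by finite character, applying fullness to finite subtuples, which is legitimate since types are determined by finite subtuples.
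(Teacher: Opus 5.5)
Your proof is correct and follows essentially the same route as the paper: right extension plus an automorphism over $B_1D$ to get $B_2'\equiv_{B_1D}B_2$ with $B_2'\ind_C AB_1D$, then base monotonicity and left transitivity to get $B_1B_2'\ind_C AD$, fullness of $D$ to get $B_1B_2'\equiv_{AD}B_1B_2$, and invariance to conclude $B_2\ind_C AB_1D$. Your closing worry about infinite tuples is unnecessary, since fullness is stated for arbitrary small tuples; in any case that reduction would go through monotonicity rather than finite character.
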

\begin{proof}
	Left to right is clear by monotonicity. For right to left, we only need to show $B_2 \ind_{C} A B_1 D $. As $B_2 \ind_{C} B_1 D$ by assumption, by  right extension (and automorphism) choose 
	$B'_2 \equiv_{B_1 D} B_2$ with $B'_2 \ind_{C} A B_1 D$, so $B'_2 \ind_{C B_1} A  D$ by base monotonicity. By assumption $B_1 B_2 \ind_{C} A D$ and $B_1 \ind_{C} A D$, so also $B_1 B'_2 \ind_{C} A D$ by the above and left transitivity. As we have $B_1 B_2 \equiv_{D} B_1 B'_2$ and $D$ is $\ind$-full over $C$, this implies $B_1 B'_2 \equiv_{A D} B_1 B_2$. As $B'_2 \ind _{C} A B_1 D$, also $B_2 \ind _{C} A B_1 D$ by invariance.
\end{proof}

The following lemma generalizes \cite[Part I Lemma 2.6]{shelah2006monadic}.
\begin{lemma}\label{lem: types in ind seqs}
Assume $D \supseteq C$ is $\ind$-full.
\begin{enumerate}
	\item If $B_i \ind_{C} A_i D$  for $i \in \{0,1\}$ and $A_0 \equiv_{D} A_1$, $B_0 \equiv_D B_1$, then also $A_0 B_0 \equiv_{D} A_1 B_1$.
	\item If $\langle A_i : i \in I \rangle$ is an $\ind_{C}$-sequence over $D$, then for any $n < \omega$ and $i_0 < \ldots < i_{n-1}$, $j_0 < \ldots < j_{n-1}$ in $I$, if $A_{i_t} \equiv_{D} A_{j_t}$ for all $t<n$, then $A_{i_0} \ldots A_{i_{n-1}} \equiv_{D A_{< i^{\ast}}} A_{j_0} \ldots A_{j_{n-1}}$, where $i^{\ast} := \min\{i_0, j_0\}$.
\end{enumerate}
	
\end{lemma}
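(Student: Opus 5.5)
For part (1), I will transfer the independence onto a common right-hand side and then use fullness of $D$. Since $B_1 \ind_C A_1 D$, $A_0 \equiv_D A_1$, and $\ind$ is invariant, I choose an automorphism $\sigma$ fixing $D$ with $\sigma(A_1) = A_0$, and set $B'_1 := \sigma(B_1)$. Then $B'_1 \ind_C A_0 D$ and $A_0 B'_1 \equiv_D A_1 B_1$. This reduces the claim to showing $A_0 B_0 \equiv_D A_0 B'_1$.

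For that, note that $B'_1 \equiv_D B_1 \equiv_D B_0$, so $B'_1 \equiv_D B_0$. Both are $\ind_C$-independent from $A_0 D$, and $A_0 D \supseteq D$. Since $D$ is $\ind$-full over $C$, Definition~\ref{def: full sets} (applied to finite subtuples, with the superset $A_0 D$) gives $B_0 \equiv_{A_0 D} B'_1$. Hence $A_0 B_0 \equiv_D A_0 B'_1 \equiv_D A_1 B_1$. If the $B_i$ are infinite, this is checked on finite subtuples, using monotonicity.

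For part (2), I will argue by induction on $n$, proving the stronger statement over $D A_{<i^\ast}$. The case $n=0$ is trivial. For the inductive step, write $\bar a = A_{i_0}\ldots A_{i_{n-2}}$ and $\bar a' = A_{j_0}\ldots A_{j_{n-2}}$. By induction, $\bar a \equiv_{D A_{<i^\ast}} \bar a'$. I now apply part (1) with base $D$ replaced by $D^\ast := D A_{<i^\ast}$, taking $A_0 := A_{<i^\ast}\bar a$, $A_1 := A_{<i^\ast}\bar a'$, and $B_0 := A_{i_{n-1}}$, $B_1 := A_{j_{n-1}}$.

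The hypotheses check out as follows.
\begin{itemize}
\item By the sequence property and monotonicity, $A_{i_{n-1}} \ind_C A_{<i_{n-1}} D$, and this contains $A_{<i^\ast}\bar a D$. Similarly for the $j$-side.
\item It remains to see $A_{i_{n-1}} \equiv_{D A_{<i^\ast}} A_{j_{n-1}}$.
\end{itemize}

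The main obstacle is the base: part (1) requires fullness of the base, but only $D$ is known to be full, not $D A_{<i^\ast}$. So I will instead apply part (1) literally over $D$. I take $A_0 := A_{<i^\ast}\bar a$ and $A_1 := A_{<i^\ast}\bar a'$, which satisfy $A_0 \equiv_D A_1$ by induction (the statement over $DA_{<i^\ast}$ gives exactly that $A_{<i^\ast}\bar a \equiv_D A_{<i^\ast}\bar a'$). I take $B_0 := A_{i_{n-1}}$ and $B_1 := A_{j_{n-1}}$, which satisfy $B_0 \equiv_D B_1$ by hypothesis. Independence is as above. Part (1) then yields $A_{<i^\ast}\bar a A_{i_{n-1}} \equiv_D A_{<i^\ast}\bar a' A_{j_{n-1}}$, which is exactly the claim over $DA_{<i^\ast}$.

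One point needs care: $A_{<i^\ast}$ must be matched identically on both sides, with the same enumeration, which it is. Also $i^\ast$ must not change at the inductive step, which holds since the minimum indices $i_0, j_0$ stay fixed. So the induction should be run on the last element, keeping $i_0, j_0$ fixed.
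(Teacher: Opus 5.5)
Your proposal is correct and follows the paper's proof. In (1) you move $B_1$ by an automorphism over $D$ sending $A_1$ to $A_0$ and then apply fullness of $D$ over $C$ with right-hand side $A_0D$; the paper does the mirror image, moving $B_0$ to the $A_1$ side. In (2) you apply (1) over $D$ itself, with $A_{<i^\ast}$ absorbed into the tuples $A_0,A_1$, exactly as the paper does, and you were right to discard your first attempt over $DA_{<i^\ast}$, since fullness is only known for $D$.
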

\begin{proof}
	(1) As $A_0 \equiv_{D} A_1$, let $B'_1$ be so that $A_0 B_0 \equiv_{D} A_1 B'_1$, in particular $B'_1 \equiv_{D} B_0 \equiv_{D} B_1$. As $B_0 \ind_{C} A_0 D$, by invariance $B'_1 \ind_{C} A_1 D$, and also $B_1 \ind_{C} A_1 D$ by assumption. As $D$ is full over $C$, this implies $B'_1 \equiv_{A_1 D} B_1$. Hence $B_1 A_1 \equiv_{D} B'_1 A_1 \equiv_{D} B_0 A_0$.
	
	(2) Argue by induction on $n$. By assumption we have $A_{i_{n}} \ind_{C} A_{<i^{\ast}} A_{i_0} \ldots A_{i_{n-1}} D$, $A_{j_{n}} \ind_{C} A_{<i^{\ast}} A_{j_0} \ldots A_{j_{n-1}} D$ and $A_{i_n} \equiv_{D} A_{j_n}$, and  $A_{<i^{\ast}} A_{i_0} \ldots A_{i_{n-1}} \equiv_{D} A_{<i^{\ast}} A_{j_0} \ldots A_{j_{n-1}} $ by the inductive assumption, so we conclude by (1).
\end{proof}

\section{Triviality of forking}
\subsection{BS-triviality for bounded preindependence relations}

The following property was considered for forking in monadically stable theories in \cite{baldwin1985second}, and for finite satisfiability over models in monadically NIP theories in \cite{shelah2006monadic} (it is called ``the f.s.~dichotomy'' in \cite{braunfeld2021characterizations}).

\begin{defn}
	We say that $\ind$ satisfies the \emph{Baldwin-Shelah triviality}, or \emph{BS-triviality}, over $A$, if for any tuples $\bar{a}, \bar{b}$ with $\bar{b} \ind_{A} \bar{a}$ and any singleton $c$ we have that either $\bar{b} c \ind_{A} \bar{a}$ or $\bar{b} \ind_{A} \bar{a} c$.
\end{defn}

The notion of a theory ``admitting coding''
was central in \cite{baldwin1985second} and \cite{shelah2006monadic}, and its variants play an important role in \cite{simon2021ordered, braunfeld2021characterizations, braunfeld2025indiscernibles}. We will use the following: 
\begin{fact}\cite{braunfeld2021characterizations, braunfeld2024corrigenda}
	If $T$ is monadically NIP, there do not exist a $D$-indiscernible
sequence $(d_i:i\in\mathbb Q)$ of tuples, a singleton $e$, a formula
$\varphi(u,v,z)\in\Lcal(D)$ and $s<t$ such that $
\models \varphi(d_s,d_t,e)$, $
\models \neg\varphi(d_i,d_t,e)$ for all $i<s$, and $
\models \neg\varphi(d_s,d_j,e)$ for all $j>t$.
\end{fact}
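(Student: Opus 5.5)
We argue by contraposition: assuming such a configuration $(d_i)_{i\in\mathbb Q}$, $e$, $\varphi$, $s<t$ exists, we will produce a monadic expansion with the independence property. Throughout we use indiscernibility over $D$ and compactness freely. Two parts of the configuration are clean: the ``left-minimality'' of $s$ for fixed $t$, and the ``right-minimality'' of $t$ for fixed $s$. The aim is to turn these into a definable choice of a pair $(d_s,d_t)$ from the parameter $e$, and then to code an arbitrary bipartite graph by naming parameters with unary predicates.

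\emph{Step 1 (localisation).} First we shrink and stretch the sequence. Replace $\mathbb Q$ by the convex piece $(-\infty,s]\cup[t,\infty)$ and reindex so that $s$ and $t$ are adjacent in the order. Then for the fixed $e$ the formula $\varphi(x,y,e)$ has the following property: among pairs $(i,j)$ with $i\le s<t\le j$, it holds on $(s,t)$ and fails on $(i,t)$ for $i<s$ and on $(s,j)$ for $j>t$. By $D$-indiscernibility and an automorphism, the same holds for every pair $s'<t'$ of the sequence with a suitable conjugate $e_{s',t'}$ of $e$. Using Ramsey and compactness, we then replace the sequence by one indexed by a dense order in which infinitely many pairwise ``separated'' pairs $(s_k,t_k)$ occur, with the intervals $[s_k,t_k]$ disjoint and increasing.

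\emph{Step 2 (coding a graph).} Fix a finite bipartite graph $G\subseteq m\times m$. Choose indices $u_0<\dots<u_{m-1}<v_0<\dots<v_{m-1}$ in the sequence, refining the order if needed. For each edge $(k,l)\in G$, we want a conjugate $e_{kl}$ of $e$ over $D$ satisfying $\varphi(d_{u_k},d_{v_l},e_{kl})$. We also want $e_{kl}$ to be ``minimal'' at that pair in the sense above. Name $P=\{d_{u_k}\}$, $Q=\{d_{v_l}\}$ and $R=\{e_{kl}\}$ by unary predicates. The candidate formula is
\[
\psi(x,y) := \exists z\in R\,\Bigl(\varphi(x,y,z)\land \neg\exists x'\in P\,(x'<_{\text{coded}} x\land \varphi(x',y,z))\land \neg\exists y'\in Q\,(y'>_{\text{coded}}y\land\varphi(x,y',z))\Bigr).
\]
Here the order on $P$ and on $Q$ is made definable by adding further unary predicates, for instance by using a copy of the configuration to define initial segments, or simply by naming each $d_{u_k}$ and $d_{v_l}$ separately and working with a finite Boolean combination. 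We then check that $\psi$ defines exactly $G$ on $P\times Q$. Since $G$ is arbitrary, compactness gives IP in a unary expansion, which contradicts monadic NIP.

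\emph{Main obstacle.} The hard step is the consistency in Step 2. The configuration controls $\varphi(-,-,e)$ only along the row through $s$ and the column through $t$. It says nothing about pairs $(i,j)$ with $i\neq s$ and $j\neq t$. So a single $e_{kl}$ could also make $\varphi$ true at some unintended pair $(u_{k'},v_{l'})$, producing spurious edges. My first attempt would be to choose the $e_{kl}$ successively by an $\ind$-type or indiscernibility argument over the previously chosen ones, so that the truth values of $\varphi(d_i,d_j,e_{kl})$ away from $(u_k,v_l)$ become uniform. Failing that, I would fall back on the Braunfeld--Laskowski route. There one first shows that monadic NIP implies that every $D$-indiscernible sequence, over a singleton $e$, is indiscernible after partitioning into finitely many convex pieces (a form of ``endpoint triviality''). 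The configuration then forces both $s$ and $t$ to be isolated cut points whose interaction is witnessed by a ternary formula. One then shows directly, by the coding argument of \cite{baldwin1985second,shelah2006monadic}, that such a cross-cut dependence yields a coding configuration.
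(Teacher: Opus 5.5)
The paper gives no proof of this Fact; it is quoted from Braunfeld--Laskowski (with the corrigendum), so your sketch must stand on its own, and it does not. Step~2 is the only substantive step, and by your own account it is left open. As formulated, it also fails for two reasons.

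First, $\psi$ quantifies along the order of the sequence on $P$ and $Q$, and that order is in general not definable. In a stable theory the sequence is an indiscernible set, yet the configuration still occurs: take a set with its $2$-element subsets and membership $E$, with $e=\{d_s,d_t\}$ and $\varphi(x,y,z)=E(x,z)\wedge E(y,z)$. Defining initial segments from a copy of the configuration is only gestured at, not carried out. Naming each $d_{u_k},d_{v_l}$ separately makes the formula depend on $m$, so compactness produces no single formula with IP.

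Second, even with the order available, the \emph{global} extremality clauses do not isolate $(u_k,v_l)$. Suppose $\varphi(d_i,d_j,e)$ held exactly for $(i,j)=(s,t)$ and for $s<i<j<t$; this is consistent with the hypotheses. Then $e_{kl}$ also witnesses $\psi$ at $(d_{u_{k+1}},d_{v_{l-1}})$, a spurious edge.

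The fallback does not close the gap either. A finite convex partition over $e$ is compatible with the configuration (take $\{s\}$ and $\{t\}$ as pieces, as you yourself observe). And ``cross-cut dependence yields coding'' is exactly the statement to be proved. A minor point: after your reindexing in Step~1, $s$ and $t$ are adjacent, so non-adjacent pairs are no longer conjugate to $(s,t)$; over $\mathbb{Q}$ no localisation is needed.

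Two ideas repair Step~2.

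(i) Uniformise first. By Ramsey and compactness (generalised indiscernibles indexed by $\mathbb{Q}$ with $s,t$ named), keeping $e$ and replacing the sequence, you may assume the following: for $i<j$, the truth value of $\varphi(d_i,d_j,e)$ depends only on which of $(-\infty,s),\{s\},(s,t),\{t\},(t,\infty)$ contain $i$ and $j$. Each hypothesis is uniform in these positions, so all of them survive, as does $D$-indiscernibility. After this, a single point below $s$ and a single point above $t$ already witness the two extremality conditions.

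(ii) Carry these witnesses inside the tuples instead of quantifying along an order. Choose $s_0^-<s_0<s_1^-<s_1<\dots<s_{m-1}<t_0<t_0^+<t_1<\dots<t_{m-1}<t_{m-1}^+$. Put $\bar a_k=(d_{s_k^-},d_{s_k})$ and $\bar b_l=(d_{t_l},d_{t_l^+})$. Let $e_{kl}$ be the image of $e$ under an automorphism over $D$ that shifts the sequence by an order-automorphism of $\mathbb{Q}$ with $s\mapsto s_k$, $t\mapsto t_l$. Set
\[
\chi(x^-x,\,yy^+,\,z):=\varphi(x,y,z)\wedge\neg\varphi(x^-,y,z)\wedge\neg\varphi(x,y^+,z).
\]
Work relative to $(s_k,t_l)$:
\begin{itemize}
\item If $k'\neq k$, both entries of $\bar a_{k'}$ lie in the same piece, so the first two conjuncts cannot both hold.
\item If $k'=k$ and $l'<l$, both entries of $\bar b_{l'}$ lie in $(s_k,t_l)$, so the first and third conjuncts cannot both hold.
\item If $k'=k$ and $l'>l$, the first conjunct fails by hypothesis.
\item At $(k,l)$ all three conjuncts hold.
\end{itemize}
Hence $\chi(\bar a_{k'},\bar b_{l'},e_{kl})$ holds iff $(k',l')=(k,l)$. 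So for any bipartite $G$, naming $R=\{e_{kl}:(k,l)\in G\}$ makes $\exists z\,(R(z)\wedge\chi)$ define $G$ exactly. Taking $G$ to be membership between $n$ and $\mathcal P(n)$ and applying compactness gives a unary expansion with IP. This is precisely where the hypothesis that $e$ is a singleton is used.
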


The following is the key result for forking. The proof is based on \cite[II, Lemma 2.3]{shelah2006monadic}, and its presentation in \cite[Proposition 3.11(4) implies (1)]{braunfeld2021characterizations}, but with some variations. 
\begin{theorem}\label{thm: dich for ind rel mon NIP}
	Assume $T$ is monadically NIP, $\ind$ is a  bounded standard preindependence relation, $A$ is an extension base for $\ind$, and $\ind$ satisfies left extension over $A$. Let $\bar{a}, \bar{b}$ arbitrary tuples with $\bar{b} \ind_{A} \bar{a}$. Then $\ind$ is BS-trivial over $A$.
\end{theorem}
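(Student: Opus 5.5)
We argue by contradiction, following the shape of Shelah's proof. Suppose $\bar b \ind_A \bar a$, $c$ is a singleton, and both $\bar b c \nind_A \bar a$ and $\bar b \nind_A \bar a c$ hold. The goal is to build a $D$-indiscernible sequence, a singleton $e$ and a formula witnessing coding, which contradicts the Fact from \cite{braunfeld2021characterizations, braunfeld2024corrigenda}.

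First fix a set $D \supseteq A$ that is $\ind$-full over $A$. Since $\ind$ is bounded, Lemma \ref{lem: existence of full sets} provides one: take $D$ containing a representative of every Lascar strong type over $A$. Using right extension, applied to $\bar b \ind_A \bar a$ and to $\bar a \ind_A A$ (as $A$ is an extension base), I would move $D$ by an automorphism over $A$ so that the configuration becomes independent from $D$ in a controlled way. By Remark \ref{rem: full sets aut}, fullness is preserved under such moves. Concretely, I would aim for $\bar a \ind_A D$ and $\bar b \ind_A \bar a D$, while keeping the two failures $\bar b c \nind_A \bar a D$ and $\bar b \nind_A \bar a c D$ (both follow by monotonicity from the original failures).

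Next, build an $\ind_A$-sequence over $D$ that is $D$-indiscernible and indexed by $\mathbb Q$. By Lemma \ref{lem: existence of ind MS} and Lemma \ref{lem: ext base for ind seq}(2), take a $D$-indiscernible $\ind_A$-sequence $\langle \bar a_i \bar b_i : i\in\mathbb Q\rangle$ over $D$ realizing $\tp(\bar a\bar b/D)$, or better, one in which the roles of $\bar a$ and $\bar b$ alternate. Lemma \ref{lem: types in ind seqs}(2) then makes the types of subsequences over $D$ depend only on order type. Pick $s<t$ and arrange $d_s \approx \bar b$ and $d_t \approx \bar a$. Using left extension over $A$, applied to $\bar b \ind_A \bar a D$ with $c$ added, find a copy of $c$ placed at the pair $(s,t)$ so that $\bar b_s \bar a_t c$ realizes the original type over $D$. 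Call this element $e$.

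The two non-independences then translate into formulas. The failure $\bar b\nind_A\bar a c$ yields, by strong finite character, a formula $\varphi_1(\bar b_s,\bar a_t,e)$. The failure $\bar b c\nind_A \bar a$ yields a formula $\varphi_2$. Take $\varphi$ to be (a combination of) these. On the other side, $\bar b_i$ for $i<s$ should satisfy $\neg\varphi$. The reason is that we want $e$ "attached" only near $s$. Lemma \ref{lem: split element in ind-seq}, together with fullness, says that $\bar b_i$ for $i<s$ stays independent over $A$ from $\bar a_t e D$ of the right type, hence cannot satisfy the forking formula. Symmetrically, $\bar a_j$ for $j>t$ fails $\varphi$ with $\bar b_s$ and $e$, because $\bar b_s e \ind_A \bar a_j D$ can be arranged by left extension.

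The main obstacle is this step: choosing $e$ so that both one-sided independences hold simultaneously, namely $e$ independent enough from the left tail and from the right tail. I would first try to place $e$ via left extension over the sequence split as $\langle \bar b_{<s}, (\bar b_s \bar a_t e), \bar a_{>t}\rangle$. Lemma \ref{lem: ext base for ind seq}(1) and Lemma \ref{lem: split element in ind-seq} would then preserve the $\ind_A$-sequence property. Lemma \ref{lem: types in ind seqs}(1) and fullness would convert "independent and same type over $D$" into "same type over the relevant parameters", and this forces $\neg\varphi$ at the other indices. The resulting configuration contradicts the monadic NIP Fact.
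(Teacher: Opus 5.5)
Your outline has the right ingredients (a full $D$, a $D$-indiscernible $\ind_A$-sequence over $D$, strong finite character, the coding Fact). However, the two steps that carry the proof are not carried out, and as written they fail.

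First, the coding pair is the wrong one. In the sequence you describe, $\langle\bar a_i\bar b_i : i\in\mathbb Q\rangle$ is an $\ind_A$-sequence over $D$, so later terms are independent over earlier ones. For $s<t$ this gives $\bar a_t\ind_A\bar b_sD$, which is the reverse of $\bar b\ind_A\bar aD$. Nothing then gives $\bar b_s\bar a_t\equiv_D\bar b\bar a$, which you need in order to place $e$ with $\bar b_s\bar a_te\equiv_D\bar b\bar ac$. It can indeed fail. Take DLO with $\ind^u$ over a model $M$: if $a\ind^u_MD$ lies just above the lower half of a cut of $M$, and $b\ind^u_M aD$ has the same type over $D$, then $b<a$; but the Morley sequence is decreasing, $b_t<a_t<b_s<a_s$, so $b_s>a_t$. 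For the same reason, the tail independences you want, $\bar b_i\ind_A\bar a_teD$ for $i<s$ and $\bar b_se\ind_A\bar a_jD$ for $j>t$, run against the sequence.

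The usable pair is $(\bar a_s,\bar b_t)$. Both $\bar b_s$ and $\bar b_t$ are $\ind_A$-independent from $\bar a_sD$, and $\bar b_s\equiv_D\bar b_t$, so fullness gives $\bar b_t\equiv_{D\bar a_s}\bar b_s$. This is where fullness really enters; the paper proves the stronger $\bar b_s\equiv_{D\bar a_{\leq s}\bar b_{>t}}\bar b_t$ via Lemmas \ref{lem: split element in ind-seq} and \ref{lem: types in ind seqs}(2).

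Second, you flag the choice of $e$ as the main obstacle but leave it unresolved, and the one concrete move you give is wrong. Left extension applied to $\bar b\ind_A\bar aD$ and $c$ yields $c''\equiv_{A\bar b}c$ with $\bar bc''\ind_A\bar aD$. That kills exactly the non-independence $\bar bc\nind_A\bar a$ you need. Also, Lemma \ref{lem: split element in ind-seq} concerns members of the sequence, so it says nothing about an element added afterwards. Since $e$ must sit on the left of $\ind$ against one tail and on the right against the other, you need an actual mechanism.

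The paper's mechanism is to use left extension \emph{before} building the sequence:
\begin{itemize}
\item take $c'\equiv_{A\bar a\bar b}c$ with $\bar a\bar bc'\ind_AD$;
\item build the Morley sequence of triples $\bar a_i\bar b_ic_i$;
\item choose $c^*$ with $\bar b_sc_s\equiv_{D\bar a_{\leq s}\bar b_{>t}}\bar b_tc^*$.
\end{itemize}
Then $\bar b_j\ind_A\bar a_sc^*D$ for $j>t$ and $\bar b_tc^*\ind_A\bar a_iD$ for $i<s$ are inherited from the sequence by invariance.

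A repair closer to your plan also works. Keep the sequence of pairs and pick $e$ with $\bar a_s\bar b_te\equiv_D\bar a\bar bc$. Apply left extension to $\bar a_s\bar b_t\ind_A\bar a_{<s}D$, keeping the type over $A\bar a_s\bar b_t$; that is all the two non-independences depend on. Then apply right extension to $\bar b_{>t}\ind_A\bar a_{\leq s}\bar b_tD$ to move $e$ over $D\bar a_{\leq s}\bar b_t$. This second move preserves what the first step achieved.
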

\begin{proof}
	Suppose $A$, $\bar{a}, \bar{b}, c$ are such that $\bar{b} \ind_{A} \bar{a}$, but $\bar{b}c \nind_{A} \bar{a}$ and $\bar{b} \nind_{A} \bar{a} c$.

	As $\ind$ is bounded, by Lemma \ref{lem: existence of full sets} there is a small set $D' \supseteq A$ so that $D'$ is $\ind$-full over $A$.
	Note that $\langle \bar{a}, \bar{b} \rangle$ is an $\ind_{A}$-sequence over $A$ (we have $\bar{a} \ind_{A} A$ as $A$ is an $\ind$-extension base). By Lemma \ref{lem: ext base for ind seq} (and Remark \ref{rem: full sets aut}) we can choose a small  $\ind$-full set $D \supseteq A$ so that $\langle \bar{a}, \bar{b} \rangle$ is an $\ind_{A}$-sequence over $D$. Then by base monotonicity and left transitivity we have in particular $\bar{a} \bar{b} \ind_{A} D$.

	By left extension over $A$ there is $c' \equiv_{A \bar{a} \bar{b}} c$ such that $\bar{a}\bar{b} c' \ind_{A} D$. By Lemma \ref{lem: existence of ind MS} we choose an $\ind_{A}$-sequence $\langle \bar{a}_i \bar{b}_i c_i : i \in \mathbb{Q} \rangle$ over $D$ which is $D$-indiscernible and with $\bar{a}_i \bar{b}_i c_i \equiv_{D} \bar{a} \bar{b} c'$. 
	
	Fix any $s<t \in \mathbb{Q}$. By invariance, we still have $\bar{b}_s c_s \nind_{A} \bar{a}_s$ and $\bar{b}_s \nind_{A} \bar{a}_s c_s$.
	
	By Remark \ref{rem: grouping ind seq}, $\langle \bar{a}_{<s}, (\bar{a}_s, \bar{b}_s), \bar{b}_t, \bar{b}_{>t} \rangle$ is an $\ind_{A}$-sequence over $D$ (of length $4$).
	In particular $\langle \bar{a}_{<s}, (\bar{a}_s, \bar{b}_s) \rangle$ is an $\ind_{A}$-sequence over $D$,  $\bar{b}_s \ind_{A} \bar{a}_s D$ (by the choice of $D$ and invariance) and $D$ is $\ind$-full over $A$, so by Lemma \ref{lem: split element in ind-seq} we have $\langle \bar{a}_{<s}, \bar{a}_s, \bar{b}_s \rangle$ is an $\ind_{A}$-sequence over $D$. So, by Remark \ref{rem: grouping ind seq}(3), $\langle \bar{a}_{<s}, \bar{a}_s, \bar{b}_s, \bar{b}_t, \bar{b}_{>t} \rangle$ is an $\ind_{A}$-sequence over $D$.
	
	Then, as $\bar{b}_s \equiv_{D} \bar{b}_t$ by indiscernibility, by Lemma \ref{lem: types in ind seqs} we get $\bar{b}_s \equiv_{D \bar{a}_{<s} \bar{a}_s \bar{b}_{>t}} \bar{b}_t$. Take $c^{\ast}$ so that $\bar{b}_s c_s \equiv_{D \bar{a}_{<s} \bar{a}_s \bar{b}_{>t}} \bar{b}_t c^{\ast}$.
	
	Then, as $\bar{b}_s \nind_{A} \bar{a}_s c_s$,  also $\bar{b}_t \nind_{A} \bar{a}_s c^{\ast }$ (by assumption, monotonicity and invariance). But for any $j > t$ we have $\bar{b}_j \ind_{A} \bar{a}_s c_s D$ (as $\langle \bar{a}_i \bar{b}_i c_i : i \in \mathbb{Q} \rangle$ is a  $\ind_{A}$-sequence over $D$, by monotonicity), hence also $\bar{b}_j \ind_{A} \bar{a}_s c^{\ast} D$ (as in particular $ c_s \equiv_{D  \bar{a}_s \bar{b}_{j}}  c^{\ast}$, by invariance).

	
	And as $\bar{b}_s c_s \nind_{A} \bar{a}_s$, also $\bar{b}_t c^{\ast} \nind_{A} \bar{a}_s$ (by assumption and invariance). But for any $i < s$
	we have $\bar{b}_s c_s \ind_{A}  \bar{a}_i D$ (as $\langle \bar{a}_i \bar{b}_i c_i : i \in \mathbb{Q} \rangle$ is a  $\ind_{A}$-sequence over $D$, by monotonicity), hence also $\bar{b}_t c^{\ast} \ind_{A} \bar{a}_i D$ (as in particular $ \bar{b}_s c_s \equiv_{D  \bar{a}_i} \bar{b}_t c^{\ast}$, by invariance).
	
%
%

	By strong finite character of $\ind$, there exist some formulas $\rho_1(\bar{x}, \bar{y},z), \rho_2(\bar{x}, \bar{y},z) \in \tp(\bar{a}_s, \bar{b}_t, c^{\ast}/A)$ so that:
	\begin{itemize}
		\item for any $\bar{b}^{+}$, if $\models \rho_1(\bar{a}_s, \bar{b}^{+}, c^{\ast})$, then  $\bar{b}^{+} \nind_{A} \bar{a}_s c^{\ast}$;
		\item for any $\bar{b}^+$ and $c^+$, if $\models \rho_2(\bar{a}_s, \bar{b}^{+}, c^{+})$  then $\bar{b}^+ c^+ \nind_{A} \bar{a}_s$; hence, by indiscernibility and invariance of $\ind$,  for any $i \in \mathbb{Q}$ we have: if $\models \rho_2(\bar{a}_i, \bar{b}^{+}, c^{+})$  then $\bar{b}^+ c^+ \nind_{A} \bar{a}_i$.
	\end{itemize} 
	
	Letting $\bar{d}_i := \bar{a}_i \bar{b}_i$, $\bar{w}_i := \bar{x}_i \bar{y}_i$ for $i \in \{1,2\}$ and $\varphi(\bar{w}_1, \bar{w}_2, z) := \rho_1( \bar{x}_1 , \bar{y}_2, z) \land \rho_2(\bar{x}_1 , \bar{y}_2, z) \in \Lcal(D)$, we have that 
	\begin{itemize}
		\item $\langle \bar{d}_i : i \in \mathbb{Q} \rangle$ is $D$-indiscernible,
		\item  $\models \varphi(\bar{d}_s, \bar{d}_t, c^{\ast})$,
		\item  $\models \neg \varphi(\bar{d}_s, \bar{d}_j, c^{\ast})$ for all $j > t$ (as $\bar{b}_j \ind_{A} \bar{a}_s c^{\ast}$, so necessarily $\models \neg  \rho_1(\bar{a}_s , \bar{b}_j, c^{\ast})$),
		\item $\models \neg \varphi(\bar{d}_i, \bar{d}_t, c^{\ast})$ for all $i<s$ (as $\bar{b}_t c^{\ast} \ind_{A} \bar{a}_i$, so necessarily $\models \neg  \rho_2(\bar{a}_i , \bar{b}_t, c^{\ast})$). \qedhere
	\end{itemize}
\end{proof}

This immediately gives the following: 
\begin{cor}\label{cor: mon NIP BS-triv fork}
	\begin{enumerate}
		\item If $T$ is monadically NIP and $A$ is an $\ind^f$-extension base (e.g.~$A \prec \cU$ is a model), then $\ind^f$ satisfies BS-triviality over $A$. 
		\item \cite{baldwin1985second} If $T$ is monadically stable, then $\ind^f$ satisfies BS-triviality over arbitrary sets.
				\item  \cite{shelah2006monadic} If $T$ is monadically NIP, then $\ind^u$ satisfies BS-triviality over models.  
	\end{enumerate}
\end{cor}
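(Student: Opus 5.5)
Each of the three items follows from Theorem~\ref{thm: dich for ind rel mon NIP} once we check its hypotheses for the relevant relation and base: $\ind$ is a bounded standard preindependence relation, $A$ is an extension base for $\ind$, and $\ind$ has left extension over $A$. The monadic hypothesis is always either monadic NIP or monadic stability, and the latter implies the former.

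For (1), $T$ is monadically NIP, hence NIP. By Fact~\ref{fac: props of forking etc in NIP}(1), $\ind^f$ is a standard preindependence relation. By Fact~\ref{fac: forking in NIP equals inv}(1), $\ind^f=\ind^i$ and it is bounded. The base $A$ is an $\ind^f$-extension base by assumption; for a model this is Fact~\ref{fac: ext bases stuff}(1). Since NIP theories are $\NTP_2$, Fact~\ref{fac: forking = div NTP2} gives that $\ind^f$ satisfies left extension over $A$ (equivalence (1)$\Leftrightarrow$(2)). All hypotheses of the theorem hold, so $\ind^f$ is BS-trivial over $A$.

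For (2), monadic stability implies monadic NIP, and $T$ is in particular stable. By Fact~\ref{fac: ext bases stuff}(2), every set is an $\ind^f$-extension base. The argument of (1) then applies over every $A$.

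For (3), $T$ is monadically NIP and $A=M$ is a model. By Fact~\ref{fac: props of forking etc in NIP}(2), $\ind^u$ is a bounded standard preindependence relation with left extension over models. By Fact~\ref{fac: ext bases stuff}(1), $M$ is an $\ind^u$-extension base. The theorem applies directly.

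No step is a real obstacle. The only points needing care are that left extension for $\ind^f$ over an arbitrary extension base requires the $\NTP_2$ equivalence in Fact~\ref{fac: forking = div NTP2}, and that boundedness of $\ind^f$ uses NIP.
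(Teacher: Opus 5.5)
Your proposal is correct and follows the paper's approach: its proof is the one-line ``combine Theorem~\ref{thm: dich for ind rel mon NIP} with Facts \ref{fac: props of forking etc in NIP}, \ref{fac: forking in NIP equals inv} and \ref{fac: ext bases stuff}(2),'' and you carry out exactly that hypothesis check for each item. You also make explicit something the paper's proof leaves implicit: left extension of $\ind^f$ over an arbitrary extension base comes from the $\NTP_2$ equivalence in Fact~\ref{fac: forking = div NTP2}.
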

\begin{proof}
	Combining Theorem \ref{thm: dich for ind rel mon NIP} with Facts \ref{fac: props of forking etc in NIP}, \ref{fac: forking in NIP equals inv} and \ref{fac: ext bases stuff}(2).
\end{proof}

\subsection{BS-triviality for forking over arbitrary sets}
 
The proof of Theorem~\ref{thm: dich for ind rel mon NIP} applies to a general bounded
preindependence relation, working over an extension base. While we show in Section \ref{sec: Trees with circularly ordered open cones}
 that there exist monadically NIP theories in which no finite set is an $\ind^f$-extension base, we can give an alternative argument specifically  for forking that has the advantage of working over arbitrary sets.


\begin{lemma}\label{arb:mutual-morley}
Let $A\subseteq M\preccurlyeq\cU$, let $p(y)$ be a global type
Lascar-invariant over $A$, and let $J\subseteq M$ be an endless
$A$-indiscernible sequence. If $I$ is a Morley sequence of $p$ over
$M$, then $I,J$ are mutually indiscernible over $A$.
\end{lemma}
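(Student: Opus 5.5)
We need to show two things: $I$ is indiscernible over $AJ$, and $J$ is indiscernible over $AI$. Write $I=\langle b_i : i \in K\rangle$, where $b_i \models p|_{M b_{<i}}$.

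\emph{$I$ is indiscernible over $AJ$.} A Morley sequence of $p$ over $M$ is indiscernible over $M$. Since $J \subseteq M$, it is in particular indiscernible over $AJ$. This direction is immediate.

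\emph{$J$ is indiscernible over $AI$.} It suffices to show that $J$ is indiscernible over $A b_{i_1}\ldots b_{i_n}$ for every $i_1<\dots<i_n$ in $K$. Since $I$ is indiscernible over $M \supseteq J$, the type of $(b_{i_1},\dots,b_{i_n})$ over $M$ equals $p^{(n)}|_M$. Here $p^{(n)}(y_1,\dots,y_n)$ is the global type of a Morley sequence of $p$, that is, of $(d_1,\dots,d_n)$ with $d_k \models p|_{\cU d_{<k}}$, each taken in a bigger monster model.

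The key step is that $p^{(n)}$ is again Lascar-invariant over $A$. This is standard: an automorphism fixing Lascar strong types over $A$ (in $\Autf(\cU/A)$, extended to the larger monster) preserves $p$, and hence preserves the inductively defined $p^{(n)}$. Since the paper works with Lascar invariance in the sense of the definition of $\ind^i$, I would verify this step carefully by induction on $n$. Given $\varphi(y_1,\dots,y_n,c)$ and $c\equiv^L_A c'$, membership in $p^{(n)}$ reduces to $\varphi(y_1,\ldots,y_{n-1},d_n,c)$ with $d_n \models p|_{\cU}$. Then one uses that, for $c \equiv^L_A c'$, we have $c d_n \equiv^L_A c' d_n$ whenever $d_n \models p$ over $\cU$, because $d_n$ realizes an $A$-Lascar-invariant type. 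This is the main (though routine) obstacle.

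With this in hand, put $\bar b := b_{i_1}\ldots b_{i_n}$, so that $\bar b \models q|_M$ where $q=p^{(n)}$ is Lascar-invariant over $A$. Let $j_1<\dots<j_m$ and $j'_1<\dots<j'_m$ be increasing tuples of indices of $J$, and let $\varphi\in\Lcal(A)$. We must show
\[
\models \varphi(\bar b, c_{j_1},\dots,c_{j_m}) \leftrightarrow \varphi(\bar b, c_{j'_1},\dots,c_{j'_m}),
\]
where $J=\langle c_j\rangle$. Both tuples $\bar c=(c_{j_1},\ldots,c_{j_m})$ and $\bar c'=(c_{j'_1},\ldots,c_{j'_m})$ lie in $M$, so each side holds if and only if $\varphi(y,\bar c)\in q$ (respectively $\varphi(y,\bar c')\in q$). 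By Lascar invariance it is enough to show $\bar c \equiv^L_A \bar c'$.

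This is where endlessness of $J$ is used. Choose $\bar c''$ consisting of $m$ elements of $J$ beyond all of $j_m$ and $j'_m$. Then $\bar c,\bar c'',\bar c''',\dots$ (taking further blocks beyond) form an $A$-indiscernible sequence, since blocks of an indiscernible sequence placed in increasing order are indiscernible; the infinitely many blocks are available precisely because $J$ is endless. Hence $\bar c \mathrel{E} \bar c''$, and similarly $\bar c' \mathrel{E} \bar c''$, so $\bar c \equiv^L_A \bar c'$ by Fact~\ref{fac: Lasc str type}. This gives indiscernibility of $J$ over $A\bar b$, hence over $AI$, and therefore mutual indiscernibility over $A$.
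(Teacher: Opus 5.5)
Your proof is correct, but it takes a different route from the paper's.

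\textbf{The paper's route.} The paper never forms Morley powers. It inducts on the length of the tuple $b_1\ldots b_n$ from $I$, with the inductive hypothesis that $J$ is indiscernible over $Ab_1\ldots b_{n-1}$. It applies endlessness over the \emph{growing} base $A\bar b$ to get $\bar e\equiv^L_{A\bar b}\bar e'$ for increasing tuples $\bar e,\bar e'$ from $J$. Since $\Autf(\cU/A\bar b)\subseteq\Autf(\cU/A)$, this gives $(\bar b,\bar e)\equiv^L_A(\bar b,\bar e')$. It then uses only the Lascar invariance of $p$ itself, together with $b_n\models p|_{M\bar b}$.

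\textbf{Your route.} You apply endlessness only once, over $A$, and move the induction into the general fact that $p^{(n)}$ is again Lascar-invariant over $A$, combined with $\bar b\models p^{(n)}|_M$. This is more modular: $\bar b$ realizes over $M$ an $A$-Lascar-invariant type, and $\bar c,\bar c'\in M$ have the same Lascar strong type over $A$. The paper's version stays entirely inside $\cU$ and needs nothing about invariant types beyond the Lascar invariance of $p$ and the $M$-indiscernibility of $I$.

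\textbf{Two points to fix if you keep your version.}

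First, your sketched verification of the key step has the indices reversed. With your convention $d_k\models p|_{\cU d_{<k}}$, it is $d_1$, not $d_n$, that realizes $p|_{\cU}$. Either correct reduction, to $\varphi(d_1,y_2,\ldots,y_n,c)$ or to $\varphi(d_1,\ldots,d_{n-1},y_n,c)$, also needs the canonical extension of $p$ to the bigger monster to be Lascar-invariant over $A$. A cleaner argument avoids the bigger monster altogether. Every $\Aut(\cU/N)$ with $N\supseteq A$ a small model lies in $\Autf(\cU/A)$, so $p$ is $N$-invariant for every such $N$. Hence $p^{(n)}$ is $N$-invariant for every such $N$, since Morley powers of $N$-invariant types are $N$-invariant. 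As $\Autf(\cU/A)$ is generated by these groups, $p^{(n)}$ is Lascar-invariant over $A$.

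Second, $\bar b\models p^{(n)}|_M$ does not follow from indiscernibility of $I$ alone. It follows from the Morley-sequence condition $b_i\models p|_{Mb_{<i}}$ together with $M$-invariance of $p$. The same $M$-invariance is what makes $I$ indiscernible over $M$ in your first step, and you should state it explicitly.
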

\begin{proof}
Since $\Aut(\cU/M)\subseteq\Autf(\cU/A)$, the type $p$ is
$M$-invariant. Thus $I$ is indiscernible over $M$, and hence over
$AJ$. For the other direction, let $(b_1,\ldots,b_n)$ be any
finite increasing tuple from $I$. We prove by induction on $n$ that
$J$ is indiscernible over $Ab_1\ldots b_n$ (which is sufficient).

Suppose this  holds for $\bar b=(b_1,\ldots,b_{n-1})$, with
$\bar b$ empty at the first step. As $J$ is endless and $A \bar b$-indiscernible, two increasing tuples $\bar e,\bar e'$
of the same length from $J$ have the same Lascar strong type over
$A\bar b$. Thus  $
 (\bar b,\bar e)\equiv_A^L(\bar b,\bar e')$. 
For every formula $\rho(y;\bar v,\bar w)\in L(A)$,
Lascar invariance of $p$ over $A$ then gives $
 \rho(y;\bar b,\bar e)\in p \ \Leftrightarrow \ 
 \rho(y;\bar b,\bar e')\in p$. 
Both parameter tuples belong to $M\bar b$, and
$b_n\models p|_{M\bar b}$, so $
 \models \rho(b_n;\bar b,\bar e) \ \Leftrightarrow \ 
 \models \rho(b_n;\bar b,\bar e')$. This proves the induction step. 
 \end{proof}

\begin{theorem}\label{arb:forking-bs}
If $T$ is monadically NIP, then  $\ind^f$ satisfies BS-triviality 
over every small set $A$. 
\end{theorem}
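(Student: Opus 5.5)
The plan is to reduce to the configuration forbidden by the Fact from \cite{braunfeld2021characterizations, braunfeld2024corrigenda}, as in the proof of Theorem~\ref{thm: dich for ind rel mon NIP}. Since $A$ need not be an extension base, I will not use full sets or Morley sequences in $\ind^f_A$ over a base $D$. Instead I will use global Lascar-invariant types and Lemma~\ref{arb:mutual-morley}. By Fact~\ref{fac: forking in NIP equals inv}, $\ind^f=\ind^i$, and I will use this together with the indiscernible-sequence characterization of $\ind^i$ in Fact~\ref{fac: indisc pres for lasc inv}.

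Assume towards a contradiction that $\bar b\ind^i_A\bar a$, but $\bar bc\nind^i_A\bar a$ and $\bar b\nind^i_A\bar ac$. The steps, in order, are as follows.
\begin{enumerate}
\item Fix a global type $p(\bar y)$ that is Lascar-invariant over $A$ and extends $\tp(\bar b/A\bar a)$.
\item By Fact~\ref{fac: indisc pres for lasc inv}, the failure $\bar b\nind^i_A\bar ac$ is witnessed by finitely many $A$-indiscernible sequences in parameters from $\bar ac$. Concatenating them, I get one sequence $J$ that is $A$-indiscernible, which I may take endless after stretching. No $A\bar ac$-conjugate of $J$ is indiscernible over $A\bar b$. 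Similarly, $\bar bc\nind^i_A\bar a$ yields an $A$-indiscernible sequence $J'$ with no $A\bar a$-conjugate indiscernible over $A\bar bc$.
\item Choose a model $M\supseteq A\bar a cJJ'$, and let $I=\langle \bar b_i:i\in\mathbb Q\rangle$ be a Morley sequence of $p$ over $M$, with $\bar b_0\equiv_{M}\bar b$ after moving by an automorphism over $A\bar a$.
\item By Lemma~\ref{arb:mutual-morley}, $I$ and $J$ (and likewise $I$ and $J'$) are mutually indiscernible over $A$.
\item Extract an $A$-indiscernible sequence $\bar d_i=(\bar e_i,\bar b_i)$, $i\in\mathbb Q$, by zipping elements $\bar e_i$ of $J$ (or of a joint sequence built from $J$ and $J'$) with elements of $I$. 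Mutual indiscernibility over $A$ makes the zipped sequence $A$-indiscernible.
\end{enumerate}

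Next I need a singleton $c^\ast$ and indices $s<t$ with the coding pattern. At the pair $(s,t)$, the copy of $\bar b$ fails to be independent from the copy of $\bar ac$, and $\bar b_tc^\ast$ depends on the copy of $\bar a$. For $j>t$, each $\bar b_j$ realizes $p$ over everything earlier, so $\bar b_j\ind^i_A \bar e_s c^\ast$ automatically. For $i<s$, independence $\bar b_tc^\ast\ind^i_A \bar e_i$ should come from Lascar invariance of $p$ together with invariance of a type of $c$ chosen as in step 3. Strong finite character of $\ind^f$, which holds because forking is witnessed by a formula, then gives formulas $\rho_1,\rho_2$. Their conjunction $\varphi(\bar d_s,\bar d_t,c^\ast)$ holds. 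It fails for pairs $(\bar d_s,\bar d_j)$ with $j>t$ and for pairs $(\bar d_i,\bar d_t)$ with $i<s$. This contradicts monadic NIP.

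The main obstacle is the left-hand side of the pattern, namely getting $\bar b_tc^\ast\ind_A\bar e_i$ for $i<s$. In Theorem~\ref{thm: dich for ind rel mon NIP} this came from left extension, which is unavailable here. What I would try first is to realize the triple $\bar a\bar bc$ itself by a global type that is Lascar-invariant over $A$ only in the $\bar b$-coordinate relative to $M$. I would order the construction so that the $\bar e_i$ with $i<s$ come from $J$, which lies inside $M$. Then independence of $\bar b_tc^\ast$ from them would reduce, via Lemma~\ref{arb:mutual-morley}, to indiscernibility of $J$ over $A\bar b_t c^\ast$. If this fails directly, the fallback is to use a two-sided version in which $J$ is replaced by a sequence in the type of $\bar a$ that is itself Morley over $M$ in a Lascar-invariant type. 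This exploits that the witnessing sequences may be chosen freely up to $A$-conjugacy.
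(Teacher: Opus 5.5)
There is a genuine gap, and it is the step you yourself flag: nothing in the proposal produces the left half of the coding pattern, $\bar b_t c^\ast \ind_A \bar e_i$ for $i<s$. Neither suggested repair can work.

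\begin{itemize}
\item If the $\bar e_i$ come from $J$ and you derive independence from indiscernibility of $J$ over $A\bar b_t c^\ast$, then $\tp(\bar b_t c^\ast \bar e_i/A)$ does not depend on $i$. That also destroys the dependence $\bar b_t c^\ast \nind_A \bar e_s$ that the pattern needs at $s$.
\item The fallback makes the copies of $\bar a$ a Morley sequence in a global $A$-Lascar-invariant extension of $\tp(\bar a/A)$. This presupposes $\bar a \ind^f_A A$, which is exactly the extension-base hypothesis (used through left extension in Theorem~\ref{thm: dich for ind rel mon NIP}) that this theorem is meant to avoid. It can fail: by Section~\ref{sec: Trees with circularly ordered open cones}, there are monadically NIP theories in which no finite set is an extension base.
\end{itemize}

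The setup also contains two errors.
\begin{itemize}
\item In step 3 you put $c$ into $M$ and ask for $\bar b_0\models p|_M$ with $\bar b_0\equiv_M\bar b$. Then $\tp(\bar b/A\bar a c)$ extends to the $A$-Lascar-invariant type $p$, so $\bar b\ind^i_A\bar a c$, contrary to your assumption.
\item In step 2, finitely many $A$-indiscernible witnesses cannot be concatenated (or zipped) into a single $A$-indiscernible sequence.
\end{itemize}

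The paper does not go through the coding configuration at all. It proceeds as follows.
\begin{itemize}
\item Take $M\supseteq A\bar a$ containing the dividing witnesses $J_\ell$ for the disjuncts $\psi_\ell$ that cover a forking formula $\theta(y,z;\bar a)\in\tp(\bar bc/A\bar a)$. Crucially, $c$ is not put into $M$.
\item Pick $\bar b_0\models p|_M$, and only afterwards choose $c_0$ with $\bar b_0c_0\equiv_{A\bar a}\bar bc$.
\item Fix the single $\ell$ with $\models\psi_\ell(\bar b_0,c_0;\bar e_{\ell,0})$. This replaces your concatenation.
\item Extend $\bar b_0$ and $\bar b_1\models p|_{M\bar b_0c_0}$ to a Morley sequence $I$ of $p$ over $M$.
\end{itemize}
Then $I$ is indiscernible over $A\bar a$ but not over $A\bar a c_0$. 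Likewise $J=J_\ell$ is indiscernible over $A\bar b_0$ (by Lemma~\ref{arb:mutual-morley}) but not over $A\bar b_0 c_0$. Endless indiscernible triviality of monadically NIP theories turns both failures into failures of indiscernibility over $Ac_0$. Since $I,J$ are mutually indiscernible over $A$, the singleton $c_0$ contradicts dp-minimality.

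So the missing ingredients are two: these two consequences of monadic NIP, used in place of the coding fact, and an order of choices that keeps $c$ outside $M$.
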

\begin{proof}
Suppose, towards a contradiction, that $
 \bar b\ind^f_A \bar a$ but $\bar b c\nind^f_A \bar a$ and $\bar b\nind^f_A \bar a c$, where $c$ is a singleton. All formulas below are allowed parameters in $A$.

Choose $\theta(y,z; \bar a)\in\tp(\bar b c/A \bar a)$ which forks over $A$. There are finitely many formulas
$\psi_\ell(y,z;\bar e_\ell)$ with $\bar e_{\ell}$ in $\cU$, each dividing over $A$, such that 
\begin{equation}\label{arb:dividing-cover}
 \theta(y,z;\bar a)\ \longrightarrow\
 \bigvee_{\ell<m}\psi_\ell(y,z; \bar e_\ell).
\end{equation}
For every $\ell<m$, choose an $A$-indiscernible sequence $
 J_\ell=(\bar e_{\ell,t}:t\in\Q)$, with $\bar e_{\ell,0}= \bar e_\ell$, 
such that $\{\psi_\ell(y,z; \bar e_{\ell,t}):t\in\Q\}$ is inconsistent.
Let $M$ be a small model containing $A \bar a$ and all the sequences $J_\ell$.

As $
 \bar b\ind^f_A \bar a$, let $p(y)$ be a global  extension of
$\tp(\bar b/A \bar a)$ non-forking over $A$, and let  $\bar b_0\models p|_M$. Choose $c_0$ with  $
 \bar b_0c_0\equiv_{A \bar a} \bar bc$. We still have $\models \theta(\bar b_0,c_0;\bar a)$ and $\bar b_0\nind^f_A \bar a c_0$. By \eqref{arb:dividing-cover}, for some $\ell<m$ we have 
\begin{equation}\label{arb:selected-disjunct}
\models  \psi_\ell(\bar b_0,c_0; \bar e_{\ell,0}).
\end{equation}
Fix that $\ell$ and let $J := J_\ell$.

Choose $\varphi(y;\bar a,c_0)\in\tp(\bar b_0/A \bar a c_0)$ which forks over $A$,
and let $\bar b_1\models p|_{M \bar b_0 c_0}$. Since $p$ does not fork over
$A$, it contains $\neg\varphi(y;\bar a,c_0)$. Thus
\begin{equation}\label{arb:two-truth-values}
 \models \varphi(\bar b_0; \bar a,c_0) \land \neg\varphi(\bar b_1; \bar a,c_0).
\end{equation}

We can extend the pair $(\bar b_0, \bar b_1)$ 
to a Morley sequence $I=(\bar b_t:t\in\Q)$ of $p$ over $M$ (extend $(\bar b_0, \bar b_1)$ to an infinite Morley sequence $(\bar b_t : t \in \omega)$ in $p$ over $M$; any increasing
pair in that sequence has the type of $(\bar b_0, \bar b_1)$ over $M$ and can
be carried to this pair by an automorphism over $M$, so we can find the desired sequence realizing $p^{\otimes I}$ with $(\bar b_0, \bar b_1)$ in the corresponding positions by compactness). The type $p$ is Lascar-invariant over
$A$ (by Fact \ref{fac: forking in NIP equals inv}(1)), hence $I,J$ are mutually
indiscernible over $A$  by Lemma~\ref{arb:mutual-morley}.

The sequence $I$ is indiscernible over $A \bar a$, because $A \bar a\subseteq M$,
but \eqref{arb:two-truth-values} shows that it is not indiscernible
over $A \bar a c_0$. As monadically NIP theories satisfy endless indiscernible triviality \cite[Theorem~2.13]{braunfeld2025indiscernibles}, $I$ is not 
indiscernible over $Ac_0$.

Also $J$ is indiscernible over $A \bar b_0$ (as $J$ is indiscernible over $A I$). As $\{\psi_\ell(y,z; \bar e_{\ell,t}):t\in\Q\}$ is inconsistent, 
$\models \neg\psi_\ell(\bar b_0,c_0; \bar e_{\ell,t})$ for some $t\in\Q$. But also  $\models  \psi_\ell(\bar b_0,c_0; \bar e_{\ell,0})$ by \eqref{arb:selected-disjunct}. Thus $J$ is not indiscernible over $A \bar b_0c_0$. By endless indiscernible triviality again,  $J$ is not indiscernible
over $Ac_0$. 

Thus 
$I,J$ are mutually indiscernible over $A$, $c_0$ is a singleton and neither of $I,J$ is indiscernible over $A c_0$ --- contradicting dp-minimality of monadically NIP theories \cite{blumensath2011simple, blumensath2011simplea}.
\end{proof}

Finally for this section, we observe that BS-triviality of forking also gives a characterization of monadic NIP.
\begin{proposition}\label{conv:fu}
For any $T$, the following are equivalent:
\begin{enumerate}
\item $T$ is monadically NIP.
\item for every $M\models T$, finite tuples $a,b$, and singleton $c$, if $
 b\ind^u_M a$ then either $
 bc\ind^i_M a$ or $\ b\ind^i_M ac$.
\item $\ind^u$ satisfies BS-triviality over models. 
\end{enumerate}
\end{proposition}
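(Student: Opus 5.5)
We prove the cycle $(1)\Rightarrow(3)\Rightarrow(2)\Rightarrow(1)$. The implication $(1)\Rightarrow(3)$ is exactly Corollary~\ref{cor: mon NIP BS-triv fork}(3), i.e.\ Theorem~\ref{thm: dich for ind rel mon NIP} applied to $\ind^u$. That theorem applies because, by Fact~\ref{fac: props of forking etc in NIP}(2) and Fact~\ref{fac: ext bases stuff}(1), $\ind^u$ is a bounded standard preindependence relation with left extension over models, and every model is an $\ind^u$-extension base.

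For $(3)\Rightarrow(2)$, suppose $b\ind^u_M a$. By (3), either $bc\ind^u_M a$ or $b\ind^u_M ac$. Since $\ind^u\Rightarrow\ind^i$ in any theory (Fact~\ref{fac: props of forking etc in NIP}(4)), this gives $bc\ind^i_M a$ or $b\ind^i_M ac$.

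The substantial direction is $(2)\Rightarrow(1)$, which we prove by contraposition. Assume $T$ is not monadically NIP. We must produce $M$, $a$, $b$ and a singleton $c$ with $b\ind^u_M a$ but $bc\nind^i_M a$ and $b\nind^i_M ac$. Over a model $M$, $\ind^i_M$ means there is an $M$-invariant global extension, and $\ind^u_M$ implies it. So a failure of $\ind^i$ can be witnessed as follows: exhibit $a'\equiv_M a$ (or, in the second clause, $a'c'\equiv_M ac$) and a formula whose truth value flips when $a$ is replaced by $a'$, in a way that no $M$-invariant extension of the relevant type can accommodate.

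We use the coding configuration from the characterization in \cite{braunfeld2021characterizations, braunfeld2024corrigenda}. Failure of monadic NIP gives a $D$-indiscernible sequence $(d_i:i\in\mathbb Q)$, a singleton $e$, a formula $\varphi(u,v,z)\in\Lcal(D)$ and $s<t$ with the three properties in the Fact before Theorem~\ref{thm: dich for ind rel mon NIP}: $\models\varphi(d_s,d_t,e)$, $\models\neg\varphi(d_i,d_t,e)$ for $i<s$, and $\models\neg\varphi(d_s,d_j,e)$ for $j>t$. We use the converse direction of that Fact, which is part of the cited characterization. If $T$ has IP we first treat it directly, using a formula $\varphi(x;y)$ with $|x|=1$ having IP together with an indiscernible sequence inside $M$ shattered by it; otherwise we run the coding argument.

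The construction, following the proof in \cite[Proposition 3.11]{braunfeld2021characterizations}, goes in four steps.
\begin{enumerate}
\item Stretch the sequence to a long index set and let $M\supseteq D$ be a model containing a cofinal-from-the-left segment $d_{<s}$ and a coinitial-from-the-right segment $d_{>t}$. Arrange that $\tp(d_s/M)$ is finitely satisfied in $d_{<s}$ and $\tp(d_t/Md_s)$ is finitely satisfied in $d_{>t}$; for example, take $M$ to be an Ehrenfeucht--Mostowski-type model over the sequence, or use that the limit types of the segments are finitely satisfiable.
\item Set $a:=d_s$, $b:=d_t$ and $c:=e$. Then $b\ind^u_M a$.
\item To see $bc\nind^i_M a$: an $M$-invariant extension of $\tp(bc/Ma)$ would have to decide $\varphi(a,y,z)$ consistently with its values on the elements $d_i$ ($i<s$), which lie in $M$ and satisfy $d_i\equiv^L_M$-type conditions matching $a$ in the limit. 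But $\varphi(d_i,b,c)$ fails for these $i$, while $\varphi(a,b,c)$ holds.
\item Symmetrically, to see $b\nind^i_M ac$: use the elements $d_j$ ($j>t$) from $M$ and the fact that $\neg\varphi(d_s,d_j,e)$ holds for them.
\end{enumerate}

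The main obstacle is making steps 3 and 4 rigorous. Plain non-finite-satisfiability, which is what \cite{braunfeld2021characterizations} proves, is not enough; we need genuine non-invariance over $M$. The plan for this is to choose the model $M$ so that $d_{<s}$ and $d_{>t}$ realize the $M$-invariant limit types in the correct positions. Then, using Fact~\ref{fac: indisc pres for lasc inv} (invariant types preserve indiscernibility of sequences), an $M$-invariant extension would force the sequence $(d_i)$, extended by $d_s$ or by $d_t$, to remain indiscernible over $bc$ (respectively over $ac$). That contradicts the flip in the truth value of $\varphi$.
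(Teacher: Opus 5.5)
Your directions $(1)\Rightarrow(3)$ and $(3)\Rightarrow(2)$ are correct and match the paper. The direction $(2)\Rightarrow(1)$, however, has a genuine gap, and it sits exactly where you located the obstacle.

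\textbf{Step 3 does not produce non-invariance.} $M$-invariance of a global $q\supseteq\tp(bc/Ma)$ only relates parameters that have the same type over $M$. The witnesses $d_i$ ($i<s$) lie in $M$, so none of them is $M$-conjugate to $a=d_s\notin M$. Hence $\varphi(a,y,z)\in q$ and $\neg\varphi(d_i,y,z)\in q$ are perfectly compatible with invariance. Finite satisfiability of $\tp(a/M)$ in $d_{<s}$ is a limit statement, not a conjugacy statement, so it does not transfer truth values.

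\textbf{Your proposed repair also fails.} Fact~\ref{fac: indisc pres for lasc inv} needs a sequence indiscernible over $M$, but $d_{<s}{}^\frown d_s$ is not, since $d_{<s}\subseteq M$. Elements of $M$ also cannot ``realize $M$-invariant limit types'' in any useful sense.

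\textbf{The coheir route fails too.} You could take a coheir $r$ of $\tp(a/M)$ finitely satisfiable in $d_{<s}$ and $a'\models r|_{Mabc}$. This gives $a'\equiv_M a$ and $\neg\varphi(a',b,c)$. But that constrains $bc$ itself, while $q$ only needs to be realized over $Maa'$ by some $b'c'\equiv_{Ma}bc$ with $\varphi(a',b',c')$. Nothing rules this out unless $q$ and $r$ commute, which you do not control.

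So your construction only recovers the failure of $\ind^u$ already obtained in \cite{braunfeld2021characterizations}, not the failure of $\ind^i$ that (2) requires. The IP case is not argued at all.

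\textbf{The missing idea.} Do not build a counterexample directly. Prove $(2)\Rightarrow(3)$ and then cite $(3)\Rightarrow(1)$ from \cite{braunfeld2021characterizations,braunfeld2024corrigenda}. The paper does this as follows.
\begin{enumerate}
\item Given $b\ind^u_M a$ and $c$, take $D\supseteq M$ that is $\ind^i$-full over $M$. This exists by Lemma~\ref{lem: existence of full sets}, since $\ind^i$ is bounded in any theory.
\item Using right extension and left extension of $\ind^u$ over models, conjugate $D$ so that $b\ind^u_M aD$ and $abc\ind^u_M D$.
\item Apply (2) with $aD$ in place of $a$, via finite character and monotonicity. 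This yields $bc\ind^i_M aD$ or $b\ind^i_M acD$.
\item Since $\tp(bc/D)$ and $\tp(b/D)$ are finitely satisfiable in $M$, they have $\ind^u$-free extensions to $aD$, respectively $acD$, and these are in particular $\ind^i$-free. Fullness of $D$ makes the $\ind^i$-free extension unique, so it coincides with the finitely satisfiable one. This gives $bc\ind^u_M aD$ or $b\ind^u_M acD$.
\item Monotonicity then gives (3).
\end{enumerate}
The uniqueness of $\ind^i$-free extensions over a full set is what converts $\ind^i$ back to $\ind^u$, and it is exactly the mechanism your argument lacks.
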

\begin{proof}
(1) implies (3) by \cite{shelah2006monadic}, and (3) implies (1) by \cite{braunfeld2021characterizations,braunfeld2024corrigenda}. We have (3) implies (2) as 
$\ind^u\Rightarrow\ind^i$. So assume (2), and fix $b\ind^u_M a$ and $c$. Let a small set 
$D_0\supseteq M$ be $\ind^i$-full over $M$ (Lemma \ref{lem: existence of full sets}), replacing it by a conjugate  $D$ over $M$ we may assume  $
 a\ind^u_M D$, $b\ind^u_M aD$. Indeed, we can arrange the first condition by right extension, and then
move $D$ over $Ma$ using $b\ind^u_M a$ to arrange the second.
By base monotonicity and left transitivity, $ab\ind^u_M D$.
Left extension gives $c'\equiv_{Mab}c$ with
$abc'\ind^u_M D$. Applying an automorphism over $Mab$ sending $c'$
to $c$, and replacing $D$ by its image, we obtain $
 b\ind^u_M aD$ and $abc\ind^u_M D$. 
 By (2)  we obtain either $
 bc\ind^i_M aD$ or  $b\ind^i_M acD$.  Both $\tp(bc/D)$ and $\tp(b/D)$ are finitely satisfiable in $M$, hence admit finitely satisfiable extensions to any bigger sets of parameters; as $\ind^u$ implies $\ind^i$ and the set $D$ is still $\ind^i$-full, we get that either  $bc\ind^u_M aD$ or $b\ind^u_M acD$. Monotonicity gives (3).
\end{proof}

\begin{cor}\label{conv:forking-characterization}
Suppose $T$ is NIP. The following are equivalent:
\begin{enumerate}
\item $T$ is monadically NIP;
\item $\ind^f$ is BS-trivial over every small set;
\item $\ind^f$ is BS-trivial over every $\ind^f$-extension base;
\item $\ind^f$  is BS-trivial over every model.
\end{enumerate}
\end{cor}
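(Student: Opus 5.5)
The cycle of implications I would prove is (1)$\Rightarrow$(2)$\Rightarrow$(3)$\Rightarrow$(4)$\Rightarrow$(1). The implication (1)$\Rightarrow$(2) is exactly Theorem~\ref{arb:forking-bs}. The implications (2)$\Rightarrow$(3)$\Rightarrow$(4) are immediate specializations, since every model is an $\ind^f$-extension base by Fact~\ref{fac: ext bases stuff}(1). So the only real content is (4)$\Rightarrow$(1). For it I would use Proposition~\ref{conv:fu}, specifically the implication (2)$\Rightarrow$(1) there: it suffices to check that for every $M\models T$, finite tuples $a,b$ and singleton $c$ with $b\ind^u_M a$, either $bc\ind^i_M a$ or $b\ind^i_M ac$.

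Fix such $M,a,b,c$. By Fact~\ref{fac: props of forking etc in NIP}(4), $b\ind^u_M a$ implies $b\ind^f_M a$. Applying BS-triviality of $\ind^f$ over the model $M$ (assumption (4)), we get either $bc\ind^f_M a$ or $b\ind^f_M ac$. Since $T$ is NIP, Fact~\ref{fac: forking in NIP equals inv}(1) gives $\ind^f=\ind^i$. So we have either $bc\ind^i_M a$ or $b\ind^i_M ac$, which is condition (2) of Proposition~\ref{conv:fu}. That proposition then yields that $T$ is monadically NIP.

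I do not expect a genuine obstacle. The point needing care is that the NIP hypothesis is used precisely to identify $\ind^f$ with $\ind^i$. This is why Proposition~\ref{conv:fu} was formulated with the weaker $\ind^i$ conclusion rather than an $\ind^u$ conclusion: BS-triviality of $\ind^f$ only yields $\ind^i$, not finite satisfiability. The upgrade from $\ind^i$ to $\ind^u$ via full sets has already been carried out inside that proposition.
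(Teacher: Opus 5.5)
Your proof is correct and is essentially the paper's argument. You get (1)$\Rightarrow$(2) from Theorem~\ref{arb:forking-bs}, and the middle implications are trivial. For (4)$\Rightarrow$(1) you pass from $\ind^u$ to $\ind^f$, apply BS-triviality over $M$, return to $\ind^i$ via $\ind^f=\ind^i$ under NIP, and invoke Proposition~\ref{conv:fu}. The paper's text says ``(3) gives'' at that step, but it is really (4) being used, as you do.
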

\begin{proof}
Theorem~\ref{arb:forking-bs} gives (1)$\Rightarrow$(2), and
(2)$\Rightarrow$(3)$\Rightarrow$(4) is immediate. For (4)$\Rightarrow$(1), assume we have tuples 
$b\ind^u_M a$, and let $c$ be a singleton. Then $b\ind^f_M a$,
so (3) gives $bc\ind^f_M a$ or $b\ind^f_M ac$. By Fact \ref{fac: forking in NIP equals inv}(1), this gives either $bc\ind^i_M a$ or $b\ind^i_M ac$. Hence $T$ is monadically NIP by Proposition \ref{conv:fu}.
\end{proof}

\subsection{Left total triviality and weak binarity}

In \cite{goode1991some}, Poizat considers \emph{total triviality} of forking in stable theories. In \cite{chernikov2026n}, we considered a version of this property beyond stability:
\begin{defn}
	We say that $\ind$ satisfies the \emph{left total triviality} over $A$, if for any tuples $\bar{a},\bar{b}, \bar{c}$, if $\bar{a} \ind_{A} \bar{c}$ and $\bar{b} \ind_{A} \bar{c}$, then $\bar{a} \bar{b} \ind_{A} \bar{c}$.
\end{defn}

\begin{cor}\label{cor: bs-left-total}
	Every standard preindependence relation satisfying BS-triviality over $A$ satisfies left total triviality over $A$.
\end{cor}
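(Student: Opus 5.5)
We argue by induction on the length of $\bar{b}$, adding one coordinate at a time and using BS-triviality with the roles arranged so that its second alternative gives a contradiction. Fix $\bar a \ind_A \bar c$ and $\bar b \ind_A \bar c$, and write $\bar b = (b_1,\ldots,b_n)$ with the $b_k$ singletons. We show by induction on $k \le n$ that $\bar a b_1 \ldots b_k \ind_A \bar c$. The case $k=0$ is the hypothesis $\bar a \ind_A \bar c$.

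For the inductive step, suppose $\bar a b_{<k} \ind_A \bar c$. Apply BS-triviality over $A$ with the tuple $\bar a b_{<k}$ in the role of ``$\bar b$'', the tuple $\bar c$ in the role of ``$\bar a$'', and the singleton $b_k$ in the role of ``$c$''. This gives one of two alternatives.
\begin{itemize}
\item Either $\bar a b_{<k} b_k \ind_A \bar c$, which is what we want.
\item Or $\bar a b_{<k} \ind_A \bar c\, b_k$.
\end{itemize}
The first alternative does not yet use the hypothesis on $\bar b$, so the work is in the second. There, the information we have about $b_k$ is on the right-hand side, and we need to move it to the left.

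To do so, we strengthen the induction hypothesis. We prove the statement for all pairs of tuples simultaneously, by induction on $|\bar a|+|\bar b|$, and use the symmetric form of the step: first apply BS-triviality to $\bar b' \ind_A \bar c$ with the singleton being the last coordinate of $\bar a$.

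A cleaner route is to use left transitivity. In the second alternative, monotonicity of $\bar a b_{<k} \ind_A \bar c b_k$ and base monotonicity give $\bar a b_{<k} \ind_{A b_k} \bar c$. If we also have $b_k \ind_A \bar c$, which follows from $\bar b \ind_A \bar c$ by monotonicity, then left transitivity with $b_k$ in the base yields $b_k\, \bar a b_{<k} \ind_A \bar c$. This is exactly $\bar a b_{\le k} \ind_A \bar c$ up to reordering the tuple, and reordering is harmless by invariance and monotonicity.

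On reflection, the base-monotonicity step needs care. Base monotonicity as stated in the paper is ``$a \ind_A bc \Rightarrow a \ind_{Ab} c$''. Applying it to $\bar a b_{<k} \ind_A b_k \bar c$ gives $\bar a b_{<k} \ind_{A b_k} \bar c$ directly. Left transitivity, in the form ``$a \ind_{Ab} c$ and $b \ind_A c$ implies $ab \ind_A c$'', then applies with $a = \bar a b_{<k}$, $b = b_k$ and $c = \bar c$. So the second alternative closes using only the axioms (2), (4), (5) of a standard preindependence relation, together with $b_k \ind_A \bar c$. In both cases we obtain $\bar a b_{\le k} \ind_A \bar c$, completing the induction; at $k=n$ this gives $\bar a \bar b \ind_A \bar c$.

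The step I expected to be the main obstacle is handling the ``wrong'' BS alternative, where $b_k$ ends up on the right. As shown above, it is resolved by base monotonicity followed by left transitivity. The only point to check is that the tuples used (possibly infinite, viewed as sets) fit the axioms as formulated, which they do.
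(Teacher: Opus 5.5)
Your argument is correct and is essentially the paper's proof. You add one singleton $b_k$ at a time and apply BS-triviality to $\bar a b_{<k} \ind_A \bar c$ with $b_k$ as the singleton; in the bad alternative you use base monotonicity to get $\bar a b_{<k} \ind_{A b_k} \bar c$, then left transitivity with $b_k \ind_A \bar c$. The paper also reduces first to finite $\bar b$ via finite character, which you take for granted, and your abandoned paragraph about inducting on $|\bar a|+|\bar b|$ can simply be deleted.
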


\begin{proof}

By finite character, we may assume that $\bar b$ is finite. We argue by induction on $|\bar{b}|$. Assume $\bar{a} \ind_{A} \bar{c}$, $\bar{b} \ind_{A} \bar{c}$ and $\bar{b} = (b_0, \ldots, b_{n})$ with $b_i$ singletons.
  By monotonicity in particular $b_0 \ldots b_{n-1} \ind_{A} \bar{c}$, hence by the inductive assumption $\bar{a} b_0 \ldots b_{n-1} \ind_{A} \bar{c}$. 
  By BS-triviality over $A$ we then have that either $\bar{a}  b_0 \ldots b_{n-1} b_n \ind_{A} \bar{c}$ and we are done, or $\bar{a}  b_0 \ldots b_{n-1} \ind_{A} \bar{c}  b_n$, hence $\bar{a}  b_0 \ldots b_{n-1} \ind_{A b_n} \bar{c}$ by base monotonicity. As also $b_n \ind_{A} \bar{c}$ by assumption and monotonicity, by left transitivity we again get $\bar{a}  b_0 \ldots b_{n-1} b_n \ind_{A} \bar{c}$.
\end{proof}

\begin{cor}\label{cor: arbitrary-left-total}
	If $T$ is monadically NIP and $A$ is any small set, then $\ind^f$ satisfies left total triviality over $A$.
\end{cor}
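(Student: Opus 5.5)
This is an immediate combination of two earlier results: Theorem~\ref{arb:forking-bs} and Corollary~\ref{cor: bs-left-total}. I would check the hypotheses of Corollary~\ref{cor: bs-left-total} for $\ind = \ind^f$ over an arbitrary small set $A$.

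First, $\ind^f$ is a standard preindependence relation in any theory, by Fact~\ref{fac: props of forking etc in NIP}(1). Second, $\ind^f$ satisfies BS-triviality over every small set $A$ once $T$ is monadically NIP, by Theorem~\ref{arb:forking-bs}. Corollary~\ref{cor: bs-left-total} then yields left total triviality of $\ind^f$ over $A$.

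The one point to watch is which properties the proof of Corollary~\ref{cor: bs-left-total} actually uses. It uses finite character to reduce to finite $\bar b$, together with monotonicity, base monotonicity and left transitivity over $A$. All of these belong to the definition of a standard preindependence relation and hold for $\ind^f$ over arbitrary sets.

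Crucially, that proof does not use right or left extension, nor does it require $A$ to be an extension base. This matters, because the paper notes that there are monadically NIP theories in which not every set is an extension base. Since Theorem~\ref{arb:forking-bs} also needs no extension-base hypothesis, the conclusion holds over every small $A$.
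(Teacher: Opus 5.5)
Your proposal is correct and is exactly the paper's argument: the corollary is obtained by combining Theorem~\ref{arb:forking-bs} (BS-triviality of $\ind^f$ over every small set in a monadically NIP theory) with Corollary~\ref{cor: bs-left-total}, applied to $\ind^f$, which is a standard preindependence relation by Fact~\ref{fac: props of forking etc in NIP}(1). You are also right that Corollary~\ref{cor: bs-left-total} uses only finite character, monotonicity, base monotonicity and left transitivity, and no extension or extension-base hypothesis, so the conclusion holds over arbitrary $A$.
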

\begin{proof}
By Theorem~\ref{arb:forking-bs} and Corollary~\ref{cor: bs-left-total}.
\end{proof}

This quickly implies some results in the literature:
\begin{defn}\cite{mennuni2020product} 
A theory $T$ is \emph{weakly binary} if for any tuples $a,b$ (in a bigger monster model $\cU' \succ \cU$) so that $\tp(a/\cU)$ and  $\tp(b/\cU)$ are invariant (over some small subsets of $\cU)$, there is a small set $A \subseteq \cU$ so that $\tp(a/\cU) \cup \tp(b/\cU) \cup \tp(a,b/A) \vdash \tp(a,b/\cU)$.	
\end{defn}

\begin{proposition}
	Every NIP theory with left totally trivial forking is weakly binary. In particular, every monadically NIP theory is weakly binary.
\end{proposition}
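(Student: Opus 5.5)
Fix $a,b$ in $\cU'\succ\cU$ with $\tp(a/\cU)$, $\tp(b/\cU)$ invariant over small sets, and choose a single small $A\subseteq\cU$ over which both are invariant. In an NIP theory, a global type invariant over $A$ does not fork over $A$ (invariance implies Lascar invariance, hence $\ind^i$, hence $\ind^f$ by Fact \ref{fac: props of forking etc in NIP}(4)). To make left total triviality usable I will enlarge $A$ to a small model $M\supseteq A$; invariance over $A$ gives invariance over $M$. I then interpret everything in the bigger monster $\cU'$: $a\ind^f_M \cU$ and $b\ind^f_M\cU$. Since $\cU$ is not small in $\cU'$, I will work with an arbitrary small $B\subseteq\cU$, $B\supseteq M$, and reduce $\vdash\tp(a,b/\cU)$ to checking that $\tp(a/\cU)\cup\tp(b/\cU)\cup\tp(a,b/M)$ determines $\tp(a,b/B)$ for each such $B$, using compactness.

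By left total triviality over $M$ (the hypothesis; in the monadically NIP case Corollary \ref{cor: arbitrary-left-total}), $ab\ind^f_M B$ for every small $B\subseteq \cU$. Hence $\tp(ab/\cU)$ does not fork over $M$, so by Fact \ref{fac: forking in NIP equals inv}(1) it is Lascar-invariant over $M$. Over a model, Lascar strong type equals type, so $\tp(ab/\cU)$ is $M$-invariant.

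For uniqueness, suppose $(a',b')$ realizes $\tp(a/\cU)\cup\tp(b/\cU)\cup\tp(a,b/M)$. Then $a'$, $b'$ have the same $M$-invariant types as $a$, $b$, so left total triviality again gives that $\tp(a'b'/\cU)$ is $M$-invariant. The two global $M$-invariant types $q=\tp(ab/\cU)$ and $q'=\tp(a'b'/\cU)$ agree on $M$ and have the same restrictions to the $x$- and $y$-variables. I must show $q=q'$.

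This is the main obstacle, since two $M$-invariant extensions of the same type over $M$ need not coincide. The plan is to use $M$-invariance together with a full-set argument as in Lemma \ref{lem: existence of full sets}/Lemma \ref{lem: types in ind seqs}(1). Choose a small $D\subseteq\cU$ that is $\ind^f$-full over $M$ (available since $\ind^f$ is bounded under NIP), containing a realization of every type over $M$. Since $\tp(ab/\cU)$ is determined by $\tp(ab/D)$ via invariance plus fullness, it is enough to match $\tp(a'b'/D)$ with $\tp(ab/D)$. I would take $A'=D$ (small) as the witness set instead of $M$. The weak binarity definition allows any small $A$, so $\tp(a,b/D)$ is part of the hypothesis, and then fullness of $D$ over $M$ gives $\tp(a'b'/\cU)=\tp(ab/\cU)$ directly. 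So the final argument is: $ab\ind^f_M\cU$ by left total triviality, likewise for $a'b'$; they agree over the full set $D$; hence they agree over $\cU$. The only routine checks remaining are the transfer of fullness to the large set $\cU$ via compactness and the choice of $D$ inside $\cU$.
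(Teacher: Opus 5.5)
Your final argument is correct and is essentially the paper's proof: left total triviality gives $ab\ind^f_M\cU$ and $a'b'\ind^f_M\cU$, and $ab\equiv_D a'b'$ over a set $D$ that is full over $M$ (the paper takes an $|M|^+$-saturated $N\succ M$, which plays exactly the role of your $D$), so fullness yields $ab\equiv_{\cU}a'b'$. In a write-up, drop the initial attempt to use $\tp(a,b/M)$ as the witness set; as you noticed, it does not suffice on its own.
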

\begin{proof}
	Let $M \prec \cU$ be small so that $\tp(a/\cU)$ and  $\tp(b/\cU)$ are $M$-invariant. Let $M \prec N \prec \cU$ be small and $|M|^{+}$-saturated, in particular $N$ is $M$-full (Definition \ref{def: full sets}). As $a \ind^i_{M}\cU$ and $b \ind^i_{M} \cU$,  by left total triviality of $\ind^i = \ind^f$ we get $a b \ind^i_{M} \cU$.

	Let $a',b'$ be arbitrary so that $a' \equiv_{\cU} a$, $b' \equiv_{\cU} b$ and $a' b' \equiv_{N} a b$.  In particular $a' \ind^{i}_{M} \cU$ and $b' \ind^{i}_{M} \cU$, hence by left total triviality  $a' b' \ind^i_{M} \cU$. But as $N$ is $\ind^i$-full over $M$ and $a b \equiv_{N} a' b'$, we get $ab \equiv_{\cU} a' b'$.
\end{proof}

\begin{cor}
	\cite{mennuni2022weakly} The theory of dense meet trees is weakly binary.
\end{cor}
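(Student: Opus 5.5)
The corollary should follow at once from the Proposition just proved. The theory of dense meet trees is monadically NIP, so it is weakly binary by that Proposition. The only real content is to cite or verify that monadic NIP holds.

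Step one is to recall why the theory of dense meet trees is monadically NIP. It is a standard example: every colored expansion of a meet tree by unary predicates is still NIP. The cleanest route is through the characterization of monadic NIP by treelike decompositions, or equivalently by bounded ``tree-rank''-type arguments. These are in \cite{baldwin1985second} and \cite{shelah2006monadic}, and meet trees are given there as the prototypical monadically NIP, non-monadically-stable structures. More concretely, one can observe that dense meet trees are dp-minimal \cite{simon2011dp}. Dp-minimality alone does not suffice, however. Instead I would use the result promised in the introduction, namely dp-minimality of arbitrary colored meet trees (Theorem~\ref{thm: dp-min of conv ord trees} specializes to trees without extra cone orderings). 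Any unary expansion of a dense meet tree is a colored meet tree, so it is dp-minimal and in particular NIP. This gives monadic NIP directly.

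Step two is to apply the preceding Proposition. By Corollary~\ref{cor: arbitrary-left-total}, forking in this theory is left totally trivial over every set. Since the theory is NIP, the Proposition yields weak binarity.

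The only possible obstacle is justifying monadic NIP without circularity. Theorem~\ref{thm: dp-min of conv ord trees} appears later in the paper, so if a forward reference is undesirable I would instead cite the literature. Simon's classification of dp-minimal theories covers trees, and Shelah's list of monadically NIP examples covers them as well. The rest is immediate.
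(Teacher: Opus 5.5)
Your proposal is correct and follows the paper's route: the corollary is immediate from the preceding Proposition (monadic NIP gives left total triviality of forking by Corollary~\ref{cor: arbitrary-left-total}, hence weak binarity), with monadic NIP of meet trees taken from the literature; the paper relies on Parigot \cite{parigot1982theories}. One small imprecision: Theorem~\ref{thm: dp-min of conv ord trees} does not literally specialize to meet trees without a cone ordering, so you would need to expand by an arbitrary ordering $\preceq_z$ of the cones and use that NIP passes to reducts, or simply cite the dp-minimality of colored trees from \cite{simon2011dp} directly.
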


We note that \emph{right} total triviality need not hold over models in monadically NIP theories (for $\ind^u$ or for $\ind^f$):
\begin{example}
	Let $T = \DLO$. Let $M$ be an arbitrary small model, $M = C \sqcup D$ for some cut $C < D$ with infinite cofinality on both sides, $a, b_1, b_2$ arbitrary with $C < b_1 < a < b_2 < D$. Then $a \ind^u_{M} b_1, a \ind^u_{M} b_2$ (and $b_1 \ind^u_{M} b_2, b_2 \ind^u_{M} b_1$) but $a \nind^f_{M} b_1 b_2$ (since the formula $b_1 < x < b_2$ divides over $M$). 
\end{example}
\begin{remark}
	However, something weaker holds:  if $\ind$ is BS-trivial over $A$, $C \supseteq A$ is $\ind$-full over $A$, and $\bar b$ is nonempty, then $\bar{a} \ind_{A} \bar{b} C$ if and only if $\bar{a} \ind_{A} b C$ for all singletons $b \in \bar{b}$ (this easily follows from BS-triviality in the same way as in \cite[Lemma 3.3(3)]{braunfeld2021characterizations} for $\ind^u$).
\end{remark}

\section{Strict non-forking and strict independence}\label{sec: strict nf}

\begin{definition}
	We say that $\tp(a/Ab)$ \emph{strictly does not fork} over $A$, and write $a \ind^{\st}_{A} b$, if there is a global type $p$ extending $\tp(a/Ab)$ which does not fork over $A$ (so in particular $a \ind^f_{A} b$) and for any $B \supseteq Ab$, if $c \models p|_{B}$ then $\tp(B/Ac)$ does not fork over $A$ (so $B \ind^f_{A} c$).
\end{definition}

Strict non-forking was introduced by Shelah \cite{shelah2009dependent}, and studied further in \cite{chernikov2012forking, chernikov2014theories, kaplan2014strict}.

\begin{definition}
\begin{enumerate}
	\item Given a set $S$, the sequences $(I_{i} : i \in S)$ are \emph{mutually indiscernible} over $A$ if $I_i$ is indiscernible over $I_{\neq i} A$ for all $i \in S$.
	\item Given a linear order $S$, the sequences $(I_{i} : i \in S)$ are \emph{almost mutually indiscernible} over $A$ if  there exists $a_i \in I_i$ so that $I_i$ is indiscernible over $I_{<i} a_{>i} A$ for all $i \in S$.
\end{enumerate}
\end{definition}

\begin{fact}\cite[Lemma 1.3]{chernikov2014theories} \label{fac: almost mut ind rotates mut ind}(Any $T$)
	Assume $(I_i : i \in S)$ are almost mutually indiscernible over $A$, witnessed by $a_i \in I_i$. Then there exist sequences $(I'_i : i \in S)$ mutually indiscernible over $A$ and with $I'_i \equiv_{A a_i} I_i$ for all $i \in S$.
\end{fact}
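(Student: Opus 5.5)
Fix the witnesses $a_i\in I_i$ ($i\in S$) for almost mutual indiscernibility over $A$. The plan is to reduce to finite $S$ by compactness, and then, for finite $S$, to make the sequences mutually indiscernible one at a time by going down $S$.

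\emph{Reduction to finite $S$.} Let $x_i$ be variables for the sequences $I_i$. Consider the type $\Sigma$ saying:
\begin{itemize}
\item $x_i \equiv_{Aa_i} I_i$ for each $i\in S$;
\item each $x_i$ is indiscernible over $A\cup\bigcup_{j\neq i}x_j$.
\end{itemize}
Any finite part of $\Sigma$ mentions only finitely many indices $i$ and finitely many elements of each sequence. So it suffices to realize the corresponding statement for a finite subset $S_0\subseteq S$. The restriction $(I_i:i\in S_0)$ is still almost mutually indiscernible over $A$ with the same witnesses. This follows from monotonicity of indiscernibility: for $i\in S_0$, the set $\{I_j : j<i, j\in S_0\}\cup\{a_j : j>i, j\in S_0\}$ is contained in the set over which $I_i$ is indiscernible.

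\emph{Finite case, by induction on $|S|$.} Write $S=\{1<\dots<n\}$. First, $I_n$ is indiscernible over $I_{<n}A$. Apply the induction hypothesis to $(I_1,\dots,I_{n-1})$ over $Aa_n$. This is legitimate: $I_i$ is indiscernible over $I_{<i}\,a_{>i}\,A$, and that set already includes $a_n$. We obtain $(I'_1,\dots,I'_{n-1})$ that are mutually indiscernible over $Aa_n$ with $I'_i\equiv_{Aa_na_i} I_i$.

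It remains to re-choose $I_n$ so that:
\begin{enumerate}
\item $I'_n$ is indiscernible over $I'_{<n}A$;
\item each $I'_i$ with $i<n$ stays indiscernible over $A I'_{<n,\neq i} I'_n$.
\end{enumerate}
For (1), pick $I''_n$ with $I''_n I'_{<n}\equiv_{A a_n} I_n J$, where $J$ realizes the type of $I'_{<n}$ over $Aa_n$ transported to the original situation.

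For (2), stretch $I''_n$ to a long sequence and extract an indiscernible sequence over $AI'_{<n}$ by Ramsey and compactness. The extracted sequence keeps $a_n$'s position up to type. Every element of it has the same type as $a_n$ over $AI'_{<n}$, and the $I'_i$ are indiscernible over $Aa_n$ together with the other $I'_j$. So, symmetrically, stretch each $I'_i$ to get indiscernibility over all of the new $I'_n$, using that each $I'_i$ is indiscernible over $A I'_{\neq i} e$ for every $e\in I''_n$.

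\emph{Main obstacle.} The hard step is this final one: upgrading indiscernibility over the single element $a_n$ to indiscernibility over the whole $I'_n$. The simplest route is the standard one, which is why I expect this fact to be cited rather than reproved. Alternate, by compactness and Ramsey, the extraction of each $I_i$ as indiscernible over the current others, and iterate $\omega$ many times. Then take the limit: each round preserves the types over $Aa_i$, and in the limit every sequence is indiscernible over the others. The key invariant to check is $I_i\equiv_{Aa_i}$ (original $I_i$). This is preserved because each extraction is done over a set containing $a_i$, or containing a conjugate of $a_i$ matched via an automorphism.
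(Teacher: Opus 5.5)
There is a genuine gap: the finite case is never actually established. For $|S|=2$ your induction hypothesis is trivial, and step (1) is trivial too (take $I'_1=I_1$, $I''_2=I_2$). So the whole content of the statement sits in step (2) and its fallback, and that is where the argument breaks.

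\begin{itemize}
\item Your justification of the ``key invariant'' is that each extraction is done over a set containing $a_i$. This cannot work: a sequence that contains $a_i$ and is indiscernible over a set containing $a_i$ is constant.
\item If you instead extract $I_i$ over the other current rows, the new element in $a_i$'s position need not have the type of $a_i$ over those rows. So it cannot be moved back onto $a_i$ by an automorphism fixing them.
\item ``Iterate $\omega$ times and take the limit'' is not a construction. The stages are unrelated tuples of $\cU$. Unless each extraction is controlled relative to the previous row, re-extracting $I_i$ destroys the indiscernibility of the other rows already obtained, so there is nothing to converge.
\item The fallback never uses almost mutual indiscernibility, yet the conclusion fails without it (e.g.\ $I_1=I_2$ non-constant with $a_1=a_2$). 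So it cannot be a proof as stated.
\item Step (1) for $n>2$ presupposes some $J\equiv_{Aa_n}I'_{<n}$ over which $I_n$ is indiscernible. You only know this for the original $I_{<n}$, to which $I'_{<n}$ is conjugate only row by row. ``Transported to the original situation'' does not produce such a $J$.
\end{itemize}

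The missing idea is to restore the witnesses only once, at the end, by a single automorphism over $A$. This is the standard argument for the cited \cite[Lemma 1.3]{chernikov2014theories}, which the paper quotes without proof.

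First, for finite $S$, almost mutual indiscernibility gives that every transversal $(e_i)_{i\in S}$ with $e_i\in I_i$ realizes $\tp((a_i)_{i\in S}/A)$. To see this, replace $a_i$ by $e_i$ in increasing order of $i$, using that $I_i$ is indiscernible over $AI_{<i}a_{>i}$.

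Second, by Ramsey and compactness, extract rows $J_i$, indexed like $I_i$ and mutually indiscernible over $A$, such that any $\Lcal(A)$-formula satisfied by finitely many increasing tuples from the rows $J_i$ is satisfied by corresponding increasing tuples from the rows $I_i$. Doing this row by row also works: replace each row by one based on it over $A$ and the current other rows. Basedness preserves the indiscernibility already achieved, so one pass suffices and no limit is needed.

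Then $J_i\equiv_A I_i$, because $I_i$ is $A$-indiscernible. The transversal of $(J_i)$ at the positions of the $a_i$ realizes $\tp((a_i)_i/A)$. Take an automorphism $\sigma$ over $A$ sending this transversal onto $(a_i)_i$, and set $I'_i:=\sigma(J_i)$. These are still mutually indiscernible over $A$, have $a_i$ in the same position as in $I_i$, and satisfy $I'_i\equiv_A I_i$; hence $I'_i\equiv_{Aa_i}I_i$. Your compactness reduction to finite $S$ is fine and then finishes the proof.
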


\begin{definition}
	A set of tuples $\{b_i : i \in S \}$ is \emph{strictly independent} over $A$ if for any $A$-indiscernible sequences $I_i$ so that $b_i$ is an element of $I_i$ for all $i \in S$, there exist some sequences $I'_i \equiv_{A b_i} I_i$ so that $(I'_i : i \in S)$ are \emph{mutually indiscernible} over $A$.
\end{definition}

\begin{fact}\cite[Theorem 6.8]{kaplan2014strict}
	Let $T$ be NIP and $A$ an $\ind^f$-extension base. The following are equivalent:
	\begin{enumerate}
		\item $a \ind^{\st}_{A} b$,
		\item $\{a,b\}$ is strictly independent over $A$.
	\end{enumerate}
	In particular, $a \ind^{\st}_{A} b \Leftrightarrow b \ind^{\st}_{A} a$.
\end{fact}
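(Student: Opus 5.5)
We prove the two implications separately, using throughout that in an NIP theory $\ind^f=\ind^i$ (Fact \ref{fac: forking in NIP equals inv}). Over the extension base $A$ we also use that forking equals dividing (Fact \ref{fac: forking = div NTP2}).

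\emph{(1)$\Rightarrow$(2).} Let $p\supseteq\tp(a/Ab)$ witness $a\ind^{\st}_A b$, and fix $A$-indiscernible sequences $I\ni a$ and $J\ni b$.
\begin{enumerate}
\item Since $p$ does not fork over $A$, it is Lascar-invariant over $A$. Moving $J$ by an automorphism over $Ab$, we get $J'\equiv_{Ab}J$ with $a\models p|_{AJ'}$. By Fact \ref{fac: indisc pres for lasc inv}, $J'$ is then indiscernible over $Aa$.
\item By strictness of $p$ we have $J'\ind^f_A a$, hence $J'\ind^i_A a$. Fact \ref{fac: indisc pres for lasc inv} then gives $I'\equiv_{Aa}I$ that is indiscernible over $AJ'$.
\item Order the index set as $0<1$, with $I_0=J'$ (marked element $b$) and $I_1=I'$ (marked element $a$). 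Then $I_0$ is indiscernible over $Aa$ and $I_1$ is indiscernible over $AI_0$, so the pair is almost mutually indiscernible over $A$.
\item Fact \ref{fac: almost mut ind rotates mut ind} yields mutually indiscernible sequences $J''\equiv_{Ab}J'\equiv_{Ab}J$ and $I''\equiv_{Aa}I'\equiv_{Aa}I$, which is exactly (2).
\end{enumerate}

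\emph{(2)$\Rightarrow$(1).} By compactness it suffices to show that the following partial type is consistent:
$$\tp(a/Ab)\cup\{\neg\varphi(x,d):\varphi(x,d)\text{ forks over }A\}\cup\{\neg\psi(x,d): \psi(x,d)\in L(A), \text{ and } \psi(a',y) \text{ for any/some } a'\models \tp(a/A) \text{ implies } y \text{ forks with } a' \text{ over } A\}.$$
Since $A$ is an extension base, both kinds of forking can be replaced by dividing, witnessed by $A$-indiscernible sequences.

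The plan for consistency is as follows. For a finite portion of the type, involving parameters $d$, consider an $A$-indiscernible sequence $J$ through $bd$ that witnesses the dividing of the excluded formulas of the first kind. Consider also a sequence $I$ through $a$, chosen as a Morley sequence of a global nonforking extension of $\tp(a/A)$, which exists since $A$ is an extension base. Strict independence of $\{a,b\}$ lets us make $I$ and $J$ mutually indiscernible over $A$ while keeping $\tp(a,b/A)$. Mutual indiscernibility then gives two things. First, $J$ is indiscernible over $Aa$, so $a$ satisfies no formula over $bd$ that divides along $J$. Second, $I$ is indiscernible over $Abd$, so $bd$ does not divide against $a$ along $I$. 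Since $I$ is a Morley sequence, Kim's lemma for NIP over extension bases shows that dividing along $I$ is the only obstruction to $bd\ind^d_A a$.

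The main obstacle is handling all finite parameter sets $d$ and both directions simultaneously, with a single realization of $\tp(a/Ab)$. I would first handle this by replacing $b$ with $bd$ and checking that strict independence of $\{a,bd\}$ over $A$ holds for suitably chosen $d$. This can be arranged by choosing $d$ realizing a nonforking extension and using the full-set and left-extension machinery of Lemmas \ref{lem: ext base for ind seq}--\ref{lem: types in ind seqs}. After that, compactness closes the argument. The resulting symmetry $a\ind^{\st}_A b\Leftrightarrow b\ind^{\st}_A a$ is immediate from the symmetric form of (2).
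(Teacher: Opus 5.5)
Your proof of (1)$\Rightarrow$(2) is correct, and it does not even use that $A$ is an extension base. There is one harmless slip: after moving $J$ by some $\sigma\in\Aut(\cU/Ab)$ you get $a\models\sigma(p)|_{AJ'}$ rather than $p|_{AJ'}$. This costs nothing, since $\sigma(p)$ is again a strictly non-forking global extension of $\tp(a/Ab)$. (The paper gives no proof of this statement; it imports it from \cite{kaplan2014strict}.)

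The genuine gap is in (2)$\Rightarrow$(1). Your compactness reduction is fine, and it shows what has to be proved: for every $d$ there is $d'\equiv_{Ab}d$ with $a\ind^f_A bd'$ and $bd'\ind^f_A a$. Strict independence of $\{a,b\}$ only lets you move sequences through $b$. Once $J$ runs through $bd$, you need $\{a,bd'\}$ to be strictly independent for some $d'\equiv_{Ab}d$, i.e.\ right extension for strict independence. Given (1)$\Rightarrow$(2), and the fact that over an extension base strict independence of a pair yields non-forking in both directions, this extension property is \emph{equivalent} to the implication you are proving. So deferring it to ``the full-set and left-extension machinery'' leaves the direction unproved.

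That machinery cannot supply it either. Lemmas~\ref{lem: ext base for ind seq}--\ref{lem: types in ind seqs} presuppose a standard preindependence relation with right extension, and for strict independence that is exactly what is at stake. Applied to $\ind^f$, they only give one-sided statements:
right extension gives $d''\equiv_{Ab}d$ with $a\ind^f_A bd''$, and left extension over $A$ gives $d'''\equiv_{Ab}d$ with $bd'''\ind^f_A a$. Neither gives a single $d'$ doing both. Getting both at once from two-sided non-forking of $a,b$ is precisely what the paper obtains only under the extra hypothesis of left total triviality (Corollary~\ref{cor: strict f iff both dirs}, answering questions of \cite{kaplan2014strict}). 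So a proof must use strict independence against suitably chosen sequences in an essential way, and your sketch contains no such argument.

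There is also a concrete error in the auxiliary step. A Morley sequence of an arbitrary global non-forking extension of $\tp(a/A)$ need not witness dividing, even in NIP over an extension base. Hence ``$I$ is indiscernible over $Abd$'' does not yield $bd\ind^f_A a$. For instance, in $\DLO$ with $A=\emptyset$:
\begin{itemize}
\item let $a=(a^1,a^2)$ with $a^1<a^2$, and let $s(y^1,y^2)$ be the $\emptyset$-invariant global type saying $y^1<\cU<y^2$;
\item its Morley sequence starting at $a$ consists of nested intervals, hence is indiscernible over any $e$ with $a^1<e<a^2$;
\item yet $a^1<z<a^2$ divides over $\emptyset$.
\end{itemize}
Kim's lemma is available for Morley sequences of \emph{strictly} non-forking global types, which exist over extension bases in NTP$_2$ theories by \cite{chernikov2012forking}, so $I$ should be chosen that way. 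Even so, once the extension property above were available, neither Kim's lemma nor Morley sequences would be needed. Strict independence of $\{a,bd'\}$ over an extension base directly gives non-forking in both directions, as in the proof of (1)$\Rightarrow$(2) of Proposition~\ref{prop: strict indep vs pairwise}.
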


\begin{proposition}\label{prop: strict indep vs pairwise}
	Assume $T$ is NTP$_2$, $A$ is an $\ind^f$-extension base and $\ind^f$ is left totally trivial over $A$. Then the following are equivalent for any set of tuples $\{a_i : i \in S \}$:
	\begin{enumerate}
		\item $\{a_i : i \in S \}$ is strictly independent over $A$;
		\item $a_i \ind^f_{A} a_j$ for all $i \neq j \in S$.
	\end{enumerate}
\end{proposition}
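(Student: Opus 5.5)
The direction (1)$\Rightarrow$(2) should be routine, and the work lies in (2)$\Rightarrow$(1). For (1)$\Rightarrow$(2), fix $i\neq j$ and an $A$-indiscernible sequence $I$ containing $a_j$. Let every other $a_k$ (including $a_i$) sit in the constant sequence, which is $A$-indiscernible. Strict independence then yields $I'\equiv_{Aa_j} I$ which is indiscernible over $Aa_i$. By the Remark after the definition of dividing, $\tp(a_i/Aa_j)$ does not divide over $A$. Since $T$ is $\NTP_2$ and $A$ is an $\ind^f$-extension base, dividing equals forking over $A$ (Fact \ref{fac: forking = div NTP2}), so $a_i\ind^f_A a_j$.

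For (2)$\Rightarrow$(1), mutual indiscernibility is a type-definable condition, so by compactness we may assume $S=\{0,\dots,n-1\}$ is finite. The plan is to build sequences $I^*_i\equiv_{Aa_i} I_i$ that are almost mutually indiscernible over $A$, witnessed by $a_i\in I^*_i$, meaning that $I^*_i$ is indiscernible over $AI^*_{<i}a_{>i}$. Fact \ref{fac: almost mut ind rotates mut ind} then rotates them into mutually indiscernible $I'_i\equiv_{Aa_i}I^*_i\equiv_{Aa_i}I_i$.

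The construction is by induction on $i$, with the inductive hypothesis that $J:=I^*_{<i}$ satisfies
\[ J\ind^f_A a_{\geq i} \quad\text{and}\quad a_i \ind^f_A J a_{>i}. \]
Left total triviality over $A$ is the main tool for combining the pairwise hypotheses. From $a_k\ind^f_A a_i$ for all $k>i$ it gives $a_{>i}\ind^f_A a_i$. Combined with $J\ind^f_A a_i$, it gives $Ja_{>i}\ind^f_A a_i$.

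At step $i$ we proceed as follows. Since $a_i\ind^f_A Ja_{>i}$, left extension over $A$ (available by Fact \ref{fac: forking = div NTP2}) gives $I^\dagger\equiv_{Aa_i}I_i$ with $I^\dagger\ind^f_A Ja_{>i}$. We then extract from $I^\dagger$, by Ramsey and compactness, a sequence $I^*_i$ indiscernible over $AJa_{>i}$ which still contains $a_i$. To keep $a_i$, extract the two halves around $a_i$ over $AJa_{>i}a_i$, in the usual way. Since $I^*_i$ realizes only finite-subtuple types occurring in $I^\dagger$ over $AJa_{>i}$, finite character and invariance give $I^*_i\ind^f_A Ja_{>i}$, and also $I^*_i\equiv_{Aa_i}I_i$ by $A$-indiscernibility of $I_i$. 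To propagate the hypothesis we need $JI^*_i\ind^f_A a_{>i}$; left total triviality reduces this to $J\ind^f_A a_{>i}$ and $I^*_i\ind^f_A a_{>i}$, both of which we have.

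The main obstacle I expect is the second half of the invariant, $a_{i+1}\ind^f_A I^*_{\le i}a_{>i+1}$. What we have naturally goes the other way, with the sequences on the left. Since $\ind^f$ is not symmetric, this must be arranged separately. My first attempt would be to strengthen the construction so that each $I^*_i$ is chosen with $a_{>i}\ind^f_A I^*_i$ as well. One way is to replace the left-extension step by a Morley-sequence style construction, in the spirit of Lemma \ref{lem: existence of ind MS}, using that over an extension base in $\NTP_2$ nonforking sequences are strict Morley enough to preserve indiscernibility. Failing that, I would fall back on the \cite{kaplan2014strict} characterization for a pair: first treat $\{a_i, a_{>i}\}$ via pairwise strict independence, and then iterate using left total triviality.
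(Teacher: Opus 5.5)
Your (1)$\Rightarrow$(2) is correct and is the paper's argument. For (2)$\Rightarrow$(1), however, your inductive hypothesis is not merely hard to propagate: it is false in general, so the plan cannot be completed.

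At the stage $i$ where a given $a_i$ is treated, every other $a_k$ lies either in $J=I^*_{<i}$ or in $a_{>i}$. So $a_i\ind^f_A Ja_{>i}$ implies $a_i\ind^f_A a_{\neq i}$ by monotonicity. In effect you are combining pairwise independences on the \emph{right}, which is right total triviality, and that fails.

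Take the paper's $\DLO$ example: $M=C\sqcup D$ is a cut of infinite cofinality on both sides, and $C<b_1<a<b_2<D$.
\begin{itemize}
\item All six pairwise relations among $a,b_1,b_2$ hold, already for $\ind^u_M$.
\item $M$ is an extension base, and $\ind^f$ is left totally trivial over $M$ by Corollary \ref{cor: arbitrary-left-total}.
\item Nevertheless $a\nind^f_M b_1b_2$, since $b_1<x<b_2$ divides over $M$.
\end{itemize}
So in any enumeration the second half of your invariant breaks when $a$ is treated. The first half, $J\ind^f_A a_{\geq i}$ with the whole tuple $a_{\ge i}$ on the right, breaks the same way once $a\in J$ and $b_1,b_2\in a_{\ge i}$.

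Your fallback via \cite{kaplan2014strict} has the same defect: strict independence of the pair $\{a_i,a_{>i}\}$ would force $a_i\ind^f_A a_{>i}$. Also, that theorem assumes NIP, not just $\NTP_2$.

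The step-$i$ construction is also unjustified on its own terms. Write $B=Ja_{>i}$. Making the two halves of $I^\dagger$ indiscernible over $ABa_i$ does not make the sequence with $a_i$ reinserted indiscernible over $AB$, and the condition $I^\dagger\ind^f_A B$ gives no control over this. For example, in $\DLO$ with $A=\emptyset$, $B=\{b\}$: an increasing $I^\dagger$ through $a_i$ with $a_i<b<e$ for every later element $e$ still satisfies $I^\dagger\ind^f b$. But after extraction the right half stays above $b$, while $a_i$ and the left half stay below it.

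The missing point is that you had the right input but used the wrong tool. You correctly derived $Ja_{>i}\ind^f_A a_i$ from left total triviality. Hence $\tp(Ja_{>i}/Aa_i)$ does not divide over $A$. The characterization of non-dividing by indiscernible sequences (the Remark after the definition of dividing) then directly yields $I^*_i\equiv_{Aa_i}I_i$ indiscernible over $AI^*_{<i}a_{>i}$. No left extension or extraction is needed.

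The only auxiliary facts needed at stage $i$ are $I^*_j\ind^f_A a_i$ for $j<i$, and these come for free:
\begin{itemize}
\item $I^*_j$ is indiscernible over $Aa_i$, since $a_i$ is among $a_{>j}$.
\item So each element $a'\in I^*_j$ has $a'\equiv_{Aa_i}a_j$, hence $a'\ind^f_A a_i$ by (2) and invariance.
\item Left total triviality together with finite character then gives $I^*_j\ind^f_A a_i$.
\end{itemize}
Fact \ref{fac: almost mut ind rotates mut ind} finishes as you planned; this is exactly the paper's proof.
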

\begin{proof}
	(1)$\Rightarrow$(2). Suppose $a_i \nind^f_{A} a_j$ for some $i \neq j \in S$, hence there is some formula $\varphi(x,y) \in \Lcal(A)$ so that $\models \varphi(a_i, a_j)$ and $\varphi(x, a_j)$ divides over $A$. Let $I_j \ni a_j$ be an $A$-indiscernible sequence witnessing this (i.e.~$\{\varphi(x,a') : a' \in I_j\}$ is inconsistent). Then no $I'_j \equiv_{A a_j} I_j$ can be $a_i$-indiscernible.

	(2)$\Rightarrow$(1). By compactness, it suffices to show this for finite $S$, so let $\{a_i : 0 \leq i \leq n\}$ with $a_i \ind^f_{A} a_j$ for all $i \neq j$ and $I_i \ni a_i$ indiscernible over $A$ be given. 
	
	By induction on $i$ we choose sequences $(I'_i : i < n)$ so that $I'_i \equiv_{A a_i} I_i$ and $I'_i$ is indiscernible over $A I'_{<i}a_{>i}$ for all $i < n$ (which is sufficient by Fact \ref{fac: almost mut ind rotates mut ind}).
	
	Let $0 \leq i < n$ and assume we have already chosen $(I'_j : 0 \leq j < i)$.
	
%
%
	
	By assumption and left total triviality  we have $a_{>i} \ind^{f}_{A} a_i$. Given $j < i$, we have $a_j \ind^f_{A} a_i$ by assumption. As $I'_j$ is $A a_i$-indiscernible by the inductive assumption, by invariance we have $a' \ind^{f}_{A} a_i$ for all elements $a' \in I'_{j}$, so $I'_j \ind^f_{A} a_i$ by left total triviality. Then, by left total triviality again, we have $I'_{<i} a_{>i} \ind^{f}_{A} a_i$. 
	 Hence there exists $I'_i \equiv_{Aa_i} I_i$ so that $I'_i$ is $AI'_{<i}a_{>i}$-indiscernible.
\end{proof}

In particular, \cite[Problems 10.1 and 10.2]{kaplan2014strict} have positive answer for NIP theories with left totally trivial forking (in particular for monadically NIP theories):
\begin{cor}\label{cor: strict f iff both dirs}
	Let $T$ be NIP, $A$ an $\ind^f$-extension base, and $\ind^{f}$ is left totally trivial over $A$. Then the following are equivalent:
	\begin{enumerate}
		\item $a \ind^{\st}_{A} b$,
		\item $a \ind^f_{A} b$ and $b \ind^f_{A} a$,
		\item the set $\{a,b\}$ is strictly independent over $A$.
	\end{enumerate}
\end{cor}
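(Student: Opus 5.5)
The plan is to chain the two results already available over an $\ind^f$-extension base in an NIP theory: Proposition \ref{prop: strict indep vs pairwise} (NIP implies NTP$_2$, so it applies) and the Kaplan--Usvyatsov theorem \cite[Theorem 6.8]{kaplan2014strict} recalled above. We will derive (1)$\Leftrightarrow$(3) from the latter, (2)$\Leftrightarrow$(3) from the former, and conclude.

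For (1)$\Leftrightarrow$(3) nothing new is needed. Since $T$ is NIP and $A$ is an $\ind^f$-extension base, \cite[Theorem 6.8]{kaplan2014strict} says exactly that $a \ind^{\st}_{A} b$ holds iff $\{a,b\}$ is strictly independent over $A$.

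For (2)$\Leftrightarrow$(3) we apply Proposition \ref{prop: strict indep vs pairwise} to the two-element family $\{a_0,a_1\}$ with $a_0 := a$ and $a_1 := b$. Its hypotheses hold: $T$ is NTP$_2$, $A$ is an $\ind^f$-extension base, and $\ind^f$ is left totally trivial over $A$ by assumption. For $S=\{0,1\}$, condition (2) of the proposition reads $a \ind^f_A b$ and $b \ind^f_A a$, which is our (2). Condition (1) of the proposition is our (3).

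The only point needing care is the degenerate case where $a$ and $b$ coincide as tuples, so that the set $\{a,b\}$ is a singleton. We will handle it by reading strict independence as a statement about the indexed family $(a_i : i \in S)$, as in the definition, where each index gets its own sequence $I_i$. This is also how Proposition \ref{prop: strict indep vs pairwise} and \cite{kaplan2014strict} use the notion. Read this way, both equivalences hold verbatim and no step is a real obstacle.
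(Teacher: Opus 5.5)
Your proposal is correct and is exactly the paper's route: the corollary is stated without a separate proof because it follows from Proposition~\ref{prop: strict indep vs pairwise} with $|S|=2$ (NIP implies NTP$_2$) for (2)$\Leftrightarrow$(3), and from the recalled Kaplan--Usvyatsov Theorem 6.8 for (1)$\Leftrightarrow$(3). Your reading of strict independence as a property of the indexed family, which removes the degenerate case $a=b\notin\acl(A)$, is a sensible clarification and agrees with how the paper's definition indexes the sequences $I_i$ by $S$.
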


\begin{cor}
	Assume $T$ is NIP, $A$ is an $\ind^f$-extension base, and $\ind^{f}$ is left totally trivial over $A$. Then the following are equivalent for an $A$-indiscernible sequence $\langle a_i : i \in I\rangle$:
	\begin{enumerate}
		\item $\langle a_i : i \in I\rangle$  is a \emph{witness} over $A$ (i.e.~whenever $\varphi(x,a_i)\in L(Aa_i)$ divides
over $A$, the set $\{\varphi(x,a_j):j\in J\}$ is inconsistent),
		\item $a_{\neq i} \ind^{f}_{A} a_i$ for all $i \in I$,
		\item $a_i \ind^f_{A} a_j$ for all $i \neq j \in I$,
		\item $\{a_i : i \in I\}$ is strictly independent over $A$.
	\end{enumerate}
	 \end{cor}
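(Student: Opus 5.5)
The plan is to prove the cycle (1)$\Rightarrow$(2)$\Rightarrow$(3)$\Rightarrow$(4)$\Rightarrow$(1). Throughout I use that $A$ is an $\ind^f$-extension base, so forking equals dividing over $A$ (Fact \ref{fac: forking = div NTP2}). I also use left total triviality of $\ind^f$ over $A$, together with Proposition \ref{prop: strict indep vs pairwise}.

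For (1)$\Rightarrow$(2), fix $i$. The plan is to show that $\tp(a_{\neq i}/Aa_i)$ does not divide over $A$; by finite character it suffices to treat a formula $\varphi(\bar x, a_i)\in\tp(a_{j_1}\ldots a_{j_k}/Aa_i)$ with finitely many indices. Suppose $\varphi(\bar x,a_i)$ divides over $A$. The witness property only addresses formulas in the variables of a single element, so I first need a reduction.

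\begin{enumerate}
\item By indiscernibility, replace the index set $I$ by a dense order.
\item Group the sequence into blocks of length $k+1$ that contain $a_i$ in a fixed position and the $a_{j}$'s in the remaining positions. This grouped sequence is again $A$-indiscernible.
\item Check that the grouped sequence is still a witness. I plan to do this with the standard argument that being a witness is preserved under such blockings in NIP; if that needs justification, I would instead argue via (4) directly.
\end{enumerate}

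The cleaner route is to prove (1)$\Rightarrow$(3) and then apply left total triviality to get (2). For (1)$\Rightarrow$(3), take $i\ne j$ and $\varphi(x,a_j)\in\tp(a_i/Aa_j)$. If it divided, then $\{\varphi(x,a_k):k\in I\}$ would be inconsistent. But $a_i$ satisfies $\varphi(x,a_j)$, and by indiscernibility $a_i$ satisfies $\varphi(x,a_k)$ for all $k$ on the same side of $i$ as $j$. That set is infinite, and any finite subset of it, after re-indexing by indiscernibility, witnesses consistency of a finite subset of the whole family. This is a contradiction, so $a_i\ind^f_A a_j$ (dividing equals forking over $A$).

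For (3)$\Rightarrow$(2), fix $i$. Each $a_j\ind^f_A a_i$, so by left total triviality (iterated, with finite character) $a_{\neq i}\ind^f_A a_i$. The implication (2)$\Rightarrow$(3) is just monotonicity. For (3)$\Leftrightarrow$(4), NIP implies NTP$_2$, so Proposition \ref{prop: strict indep vs pairwise} applies verbatim to the set $\{a_i : i\in I\}$.

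For (4)$\Rightarrow$(1), suppose $\varphi(x,a_i)$ divides over $A$ and $\{\varphi(x,a_j):j\in I\}$ is consistent, realized by some $e$. Choose an $A$-indiscernible sequence $J\ni a_i$ witnessing dividing. Strict independence of $\{a_i\}\cup\{a_j:j\ne i\}$ gives $J'\equiv_{Aa_i}J$ that is indiscernible over $A a_{\neq i}$. I expect the main obstacle to be transferring this to indiscernibility over $Ae$ to reach a contradiction. The plan here is to use (2), so that $\tp(a_{\neq i}/Aa_i)$ does not fork. I would then use NIP with the sequence $I$ itself: the type of $e$ over the indiscernible sequence $I$, together with $\varphi(e,a_j)$ holding for all $j$, shows that $\varphi(x,a_i)$ is implied modulo consistency by the types over $a_{\neq i}$. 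Concretely, I would replace $e$ by a realization of the average type, so that $e\ind^i_{A a_{\neq i}}$-style invariance gives $J'$ indiscernible over $Ae$ after extension. That contradicts $\{\varphi(x,b):b\in J'\}$ being inconsistent while $\models\varphi(e,a_i)$. If this step resists, I would fall back on the known equivalence in NIP over extension bases between ``witness'' and ``strict Morley''-type sequences from \cite{kaplan2014strict}, combined with Corollary \ref{cor: strict f iff both dirs}.
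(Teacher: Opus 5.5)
Your steps (1)$\Rightarrow$(3), (3)$\Leftrightarrow$(2) and (3)$\Leftrightarrow$(4) are fine. The last two coincide with the paper's use of monotonicity, left total triviality and Proposition~\ref{prop: strict indep vs pairwise}. Your direct argument for (1)$\Rightarrow$(3), combined with left total triviality, replaces the direction (1)$\Rightarrow$(2) of \cite[Theorem 5.2]{kaplan2014strict}, which the paper simply quotes.

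One small fix is needed there. The set of $k$ on the same side of $i$ as $j$ need not be infinite (e.g.\ $I=\omega$, $j<i$). However, $\models\varphi(a_{i'},a_{j'})$ holds for every pair ordered like $(i,j)$. So any finite increasing tuple of indices can be moved by indiscernibility to the correct side of some $i'$, using only that $I$ is infinite.

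The genuine gap is (4)$\Rightarrow$(1). With a single row $J'\ni a_i$ indiscernible over $Aa_{\neq i}$, you need a conjugate of $J'$ over $Aa_{\neq i}a_i$ to be indiscernible over $Ae$. Via Fact~\ref{fac: indisc pres for lasc inv}, this would follow from having some $e\models\{\varphi(x,a_k):k\in I\}$ with $e\ind^i_{Aa_{\neq i}}a_i$, i.e.\ from $\{\varphi(x,a_k):k\in I\}$ not forking over $Aa_{\neq i}$. Nothing in your sketch provides this:
\begin{itemize}
\item condition (2) says nothing about $e$;
\item an average type of $I$ is a type in the variables of the $a_k$, not of $e$;
\item making $I$ indiscernible over $Ae$ only yields $\tp(a_i/Aa_{<i}e)$ finitely satisfiable in $a_{<i}$, which is invariance in the wrong direction.
\end{itemize}
Your fallback is not pinned down either. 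Obtaining a strict Morley sequence through Corollary~\ref{cor: strict f iff both dirs} would need $a_i\ind^f_A a_{<i}$, a union on the right, which left total triviality does not supply. Only the exact citation \cite[Theorem 5.2]{kaplan2014strict} for (1)$\Leftrightarrow$(2), as in the paper, closes the argument that way.

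The missing idea is to apply strict independence to all the $a_k$ at once and argue via NTP$_2$, not through $e$.
\begin{itemize}
\item Let $J=(b_l)_{l<\omega}$ be $A$-indiscernible with $b_0=a_i$ and $\{\varphi(x,b_l):l<\omega\}$ $m$-inconsistent.
\item For each $k$, move $J$ by an automorphism over $A$ sending $a_i$ to $a_k$, giving $J_k$ starting with $a_k$.
\item By (4), get $J'_k\equiv_{Aa_k}J_k$ mutually indiscernible over $A$. Each row $J'_k$ is $m$-inconsistent for $\varphi$.
\item By mutual indiscernibility, every path $(b'_{k_t,f(t)})_{t<n}$ has the same type over $A$ as the column $(a_{k_0},\ldots,a_{k_{n-1}})$. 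Hence every path is consistent, because $\{\varphi(x,a_k):k\in I\}$ is.
\item Since $I$ is infinite, compactness gives TP$_2$ for $\varphi$, contradicting NIP.
\end{itemize}
This proves (4)$\Rightarrow$(1) in any NTP$_2$ theory, with no extension base needed. Combined with your other steps, it would make the corollary self-contained, without appeal to \cite{kaplan2014strict}.
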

\begin{proof}
	The equivalence $(1)\Leftrightarrow(2)$ is
\cite[Theorem 5.2]{kaplan2014strict} (NIP theories are resilient by \cite{yaacov2014independence}).
Monotonicity gives $(2)\Rightarrow(3)$, and left total triviality gives the converse. Proposition~\ref{prop: strict indep vs pairwise}
gives $(3)\Leftrightarrow(4)$.
\end{proof}

For the following consequence of the characterization of dp-rank, see \cite{chernikov2014theories} or \cite{kaplan2014strict}:
\begin{fact}\label{fac: dp-rk vis st}
Let $T$ be NIP and $A$ an $\ind^{f}$-extension base. If $\dprk(a/A) < n$, then for any set of $n$ finite tuples $\{b_i : i < n\}$ strictly independent over $A$, we have $a \ind^{f}_{A} b_i$ for some $i<n$.
\end{fact}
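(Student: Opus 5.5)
We argue by contraposition: we assume $a \nind^f_{A} b_i$ for every $i<n$ and produce $n$ sequences that are mutually indiscernible over $A$, none of which remains indiscernible over $Aa$. By the characterization of dp-rank via mutually indiscernible sequences, this gives $\dprk(a/A) \geq n$. We use the standard version of that characterization (over an arbitrary base set $A$): $\dprk(a/A) \geq n$ if and only if there are $n$ sequences mutually indiscernible over $A$ none of which is indiscernible over $Aa$. In NIP this is equivalent to the ict-pattern definition, and we take it as known.

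First, we replace forking by dividing. Since $T$ is NIP, hence $\NTP_2$, and $A$ is an $\ind^f$-extension base, Fact~\ref{fac: forking = div NTP2} says that a formula forks over $A$ if and only if it divides over $A$. So for each $i<n$ there is a formula $\varphi_i(x,b_i) \in \tp(a/Ab_i)$, with parameters from $A$ besides $b_i$, which divides over $A$. Choose an $A$-indiscernible sequence $I_i$ containing $b_i$ such that $\{\varphi_i(x,b') : b' \in I_i\}$ is inconsistent. We may take $I_i$ of any fixed infinite order type, with $b_i$ placed at a prescribed position.

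Next, we apply strict independence of $\{b_i : i<n\}$ over $A$ to the sequences $I_i$. This gives $I'_i \equiv_{Ab_i} I_i$ with $(I'_i : i<n)$ mutually indiscernible over $A$. The automorphism over $Ab_i$ carrying $I_i$ to $I'_i$ shows three things:
\begin{itemize}
\item $I'_i$ is $A$-indiscernible;
\item $b_i \in I'_i$;
\item $\{\varphi_i(x,b') : b' \in I'_i\}$ is still inconsistent.
\end{itemize}

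Now we check that no $I'_i$ is indiscernible over $Aa$. If $I'_i$ were $Aa$-indiscernible, then all its elements would have the same type over $Aa$ as $b_i$. Since $\models \varphi_i(a,b_i)$, we would get $\models \varphi_i(a,b')$ for every $b' \in I'_i$. This contradicts the inconsistency of $\{\varphi_i(x,b') : b' \in I'_i\}$. Hence $(I'_i : i<n)$ are $n$ mutually indiscernible sequences over $A$, each of which fails to be indiscernible over $Aa$, so $\dprk(a/A) \geq n$, contradicting the hypothesis.

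The main point requiring care is the reduction from forking to dividing over $A$. Forking alone yields only a finite disjunction of dividing formulas, each with its own parameters, and those parameters need not be $b_i$. That would prevent us from applying strict independence to sequences through the $b_i$ themselves. This is exactly where the assumption that $A$ is an extension base is used. The rest is bookkeeping: the conjugation preserves the dividing witness, and the dp-rank characterization is applied over the base $A$.
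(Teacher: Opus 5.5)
Your argument is correct. Over an $\ind^f$-extension base in an NIP (hence $\NTP_2$) theory, forking equals dividing, so each $\tp(a/Ab_i)$ contains a dividing formula. Strict independence then lets you conjugate the dividing witnesses $I_i \ni b_i$ over $Ab_i$ into sequences mutually indiscernible over $A$. None of these can be indiscernible over $Aa$, so $\dprk(a/A)\geq n$.

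The paper gives no proof of this Fact; it only cites \cite{chernikov2014theories} and \cite{kaplan2014strict}, describing it as a consequence of the mutually-indiscernible-sequences characterization of dp-rank. Your proof is essentially that standard argument.
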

\noindent Combining Fact \ref{fac: dp-rk vis st} with Corollary \ref{cor: strict f iff both dirs} and the fact that monadically NIP theories are dp-minimal \cite{blumensath2011simple,  blumensath2011simplea}, we thus note the following immediate corollary which motivates the next section:
\begin{cor}\label{cor: key semilin fork}
	Let $T$ be monadically NIP and $A$ an $\ind^f$-extension base. Let $\bar{a}, \bar{b}$ be any finite tuples with $\bar{a} \ind^f_{A} \bar{b}$ and $\bar{b} \ind^{f}_{A} \bar{a}$. Then for any singleton $c$, either $c \ind^f_{A} \bar{a}$ or $c \ind^{f}_{A} \bar{b}$.
\end{cor}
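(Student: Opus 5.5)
The plan is to combine Corollary \ref{cor: strict f iff both dirs} with Fact \ref{fac: dp-rk vis st} for $n=2$, using dp-minimality of monadically NIP theories.

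First, we obtain strict independence of the pair. Since $T$ is monadically NIP, Corollary \ref{cor: arbitrary-left-total} gives that $\ind^f$ is left totally trivial over every small set, in particular over $A$. Also $T$ is NIP, and $A$ is an $\ind^f$-extension base by hypothesis. So Corollary \ref{cor: strict f iff both dirs} applies. From $\bar a \ind^f_A \bar b$ and $\bar b \ind^f_A \bar a$ (its condition (2)), we get that the set $\{\bar a, \bar b\}$ is strictly independent over $A$.

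Second, we bound the dp-rank of $c$. Monadically NIP theories are dp-minimal \cite{blumensath2011simple, blumensath2011simplea}, so $\dprk(c/A) \leq 1 < 2$ for the singleton $c$. This holds over an arbitrary small base $A$, since dp-minimality is a statement about all parameter sets.

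Third, we conclude. We apply Fact \ref{fac: dp-rk vis st} with $n=2$, $b_0 = \bar a$ and $b_1 = \bar b$. These are finite tuples forming a strictly independent set over the extension base $A$. The Fact then gives $c \ind^f_A \bar a$ or $c \ind^f_A \bar b$.

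The main obstacle is only bookkeeping. First, the set $\{\bar a, \bar b\}$ must be read as a two-element family indexed by $\{0,1\}$ even if $\bar a = \bar b$. In that degenerate case we have $\bar a \ind^f_A \bar a$, so $\bar a \subseteq \acl(A)$ in NIP, and hence every $c$ is $\ind^f_A$-independent from $\bar a$ anyway. Second, we must check that the dp-rank convention in Fact \ref{fac: dp-rk vis st} makes dp-minimality mean $\dprk < 2$ for singletons. Everything else is direct invocation of the earlier results.
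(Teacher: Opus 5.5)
Your proposal is correct and is exactly the paper's argument. The paper also combines Corollary~\ref{cor: strict f iff both dirs} (applicable since $\ind^f$ is left totally trivial over every small set by Corollary~\ref{cor: arbitrary-left-total}), dp-minimality of monadically NIP theories, and Fact~\ref{fac: dp-rk vis st} with $n=2$; one small quibble concerns your degenerate case $\bar a=\bar b$, where $\bar a\ind^f_A \bar a$ gives $\bar a\subseteq\acl(A)$ in any theory, and the conclusion $c\ind^f_A \bar a$ then follows because $A$ is an extension base (so $c\ind^f_A A$, which implies $c\ind^f_A \acl(A)$), not because of NIP.
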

\begin{example}
\begin{enumerate}
	\item We need to require both $\bar{a} \ind^f_{A} \bar{b}$ and $\bar{b} \ind^{f}_{A} \bar{a}$ here. Indeed, in $T = \DLO$, let $a_1 < b_1 < c < b_2 < a_2$. Then $\emptyset$ is an $\ind^f$-extension base (all sets are) and $a_1 a_2 \ind^{f} b_1 b_2$ (but $b_1 b_2 \nind^{f} a_1 a_2$), $c \nind^f a_1 a_2$ and $c \nind^{f} b_1 b_2$.
	\item It is necessary to assume that $A$ is an $\ind^f$-extension base. Consider the theory of two disjoint circular orders, let $a$ be a single point in one of them, and let $b, c$ be points in the same circle. Then $b \ind^f_{a} c$ and $c \ind^f_a b$. But for any point $e$ in the other circle, $e \nind^f_a b, e \nind^f_a c$ (because $e \nind^f_a a$, witnessed by the formula over $a$ saying ``I am in the other circle'').
\end{enumerate}
\end{example}

\section{The forking forest}\label{sec: fork forest}

In this section we consider a common generalization of the fundamental equivalence relation in monadically stable theories defined in terms of forking by Baldwin and Shelah \cite{baldwin1985second} and Shelah's partial order defined via $\ind^u$ over models in monadically NIP theories \cite{shelah2006monadic} (studied further in \cite{blumensath2011simple}). As the main result, we show that this partial order is \emph{semilinear}, after possibly removing one greatest equivalence class (i.e.~the set of predecessors of any element is linearly ordered).

\subsection{Semilinearity}

\begin{remark}\label{rem:invariant-transfer}
Suppose
$C\subseteq D\subseteq M \prec \cU$, $ u\equiv_M v$ and $e\ind^i_C Duv$. Then $u\equiv_{De}v$.
\end{remark}
\begin{proof}
By definition $\tp(e/Duv)$ extends to a global type
$p(x)$  Lascar-invariant over $C$. Choose
$\sigma\in\Aut(\cU/M)$ with $\sigma(u)=v$, then $\sigma \in \Autf(\cU/C)$ and 
it fixes $D$ pointwise. So $\varphi(x;d,u)\in p \Leftrightarrow 
 \varphi(x;d,v)\in p$ for any tuple $d$
from $D$ and formula $\varphi$.  We conclude as $e \models p|Duv$.\end{proof}

We have the following  generalization of Corollary \ref{cor: key semilin fork}:
\begin{theorem}\label{strict:relative-semilinearity}
Let $T$ be arbitrary, and let $\ind$ be a standard bounded 
preindependence relation satisfying BS-triviality over $C$ (in a monadically NIP theory, this applies to $\ind=\ind^f$ over every small set $C$, by Theorem~\ref{arb:forking-bs}, and to $\ind=\ind^u$ over models, by Corollary~\ref{cor: mon NIP BS-triv fork}(3)). For every small set $D$, singleton $d$ and tuples $\bar b,\bar c$, we have 
\begin{gather*}
	\left( d\ind_C D,  \ \bar{b}\ind_C D \bar c \textrm{ and } \bar c\ind_C D \bar b \right)   \ \Longrightarrow  \ d\ind_C D\bar b\ \text{ or }\ d\ind_C D \bar c.
\end{gather*}
\end{theorem}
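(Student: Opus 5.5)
The plan is to use BS-triviality twice to dispose of the easy cases, and then derive a contradiction in the remaining ``doubly independent'' configuration using boundedness, via full sets and $\ind$-sequences.

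\textbf{Step 1: reduction by BS-triviality.} Suppose towards a contradiction that $d\ind_C D$, $\bar b\ind_C D\bar c$ and $\bar c\ind_C D\bar b$, but $d\nind_C D\bar b$ and $d\nind_C D\bar c$. Apply BS-triviality over $C$ to $\bar b\ind_C D\bar c$ with the singleton $d$. Either $\bar b d\ind_C D\bar c$, which gives $d\ind_C D\bar c$ by monotonicity (excluded), or $\bar b\ind_C D\bar c d$. Symmetrically, from $\bar c\ind_C D\bar b$ we get $\bar c\ind_C D\bar b d$. So we may assume
\[
\bar b\ind_C D\bar c d \quad\text{and}\quad \bar c\ind_C D\bar b d.
\]
From the second, base monotonicity gives $\bar c\ind_{C\bar b} Dd$. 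Together with $\bar b\ind_C Dd$ (monotonicity of the first), left transitivity gives $\bar b\bar c\ind_C Dd$. We are therefore in the situation $d\ind_C D$ and $\bar b\bar c\ind_C Dd$, with $d$ ``dependent'' on both $\bar b$ and $\bar c$ from the side of $d$.

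\textbf{Step 2: passing to a full set.} By boundedness and Lemma~\ref{lem: existence of full sets}, fix a set $E\supseteq C$ which is $\ind$-full over $C$. Using Lemma~\ref{lem: ext base for ind seq}(1) applied to the $\ind_C$-sequence $\langle d,\bar b,\bar c\rangle$ over $D$, replace $E$ by a conjugate over $CD$ so that this becomes an $\ind_C$-sequence over $DE$; this is possible thanks to Step 1, since $\bar c\ind_C D d\bar b$. By Remark~\ref{rem: full sets aut}, $DE$ is still full over $C$. Then, as in the proof of Theorem~\ref{thm: dich for ind rel mon NIP}, build $DE$-indiscernible $\ind_C$-sequences of conjugates of $\bar b$ and of $\bar c$ using Lemma~\ref{lem: existence of ind MS}. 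Lemma~\ref{lem: types in ind seqs} then tells us that the types of the relevant finite configurations over $DE$ are determined by the types of their entries together with their order in the sequence.

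\textbf{Step 3: the contradiction (main obstacle).} The task is to turn the two dependencies $d\nind_C D\bar b$ and $d\nind_C D\bar c$, witnessed by formulas via strong finite character, into a contradiction with the freeness $\bar b\bar c\ind_C Dd$. The first approach I would try is to use fullness of $DE$. Since $\bar b\ind_C DEd$ and a copy $\bar b'\equiv_{DE}\bar b$ chosen with $\bar b'\ind_C DE d\bar c$ must then satisfy $\bar b'\equiv_{DEd}\bar b$, we can move $\bar b$ freely over $d$; similarly for $\bar c$. Now exploit the asymmetry. In the sequence, $d$ precedes $\bar b,\bar c$, so $d\ind_C D$ holds but $d$'s type over $D\bar b$ forks. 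The plan is to show, via Remark~\ref{rem:invariant-transfer}-style invariance and Lemma~\ref{lem: types in ind seqs}(1), that $\tp(d\bar b/DE)$ is the ``free'' amalgam of $\tp(d/DE)$ and $\tp(\bar b/DE)$ in \emph{both} orders. That would force $d\ind_C DE\bar b$ by invariance, which is a contradiction.

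This symmetry-transfer step is where I expect the real difficulty. If the direct argument fails, I would fall back to a second round of BS-triviality, applied coordinatewise along $\bar b$ to $d\ind_C D$ and inducting on the length of $\bar b$. The left-extension alternative at each stage would be controlled by the independence $\bar c\ind_C D\bar b d$, using Corollary~\ref{cor: bs-left-total} to combine the pieces.
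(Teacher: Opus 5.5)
\textbf{There is a genuine gap in Step 3.} Your Step 1 is the paper's opening and is correct: two applications of BS-triviality give $\bar b\ind_C D\bar c d$ and $\bar c\ind_C D\bar b d$. But Step 3, where the contradiction has to be produced, is not an argument, and the direct route you sketch cannot work. That plan uses $\bar c$ only to reposition $\bar b$. It never invokes $d\nind_C D\bar c$ or a further application of BS-triviality. Without $d\nind_C D\bar c$, the target $d\ind_C DE\bar b$ is false even with full sets available.

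Here is a counterexample in $\DLO$. Take $C=D=M$ a model with a cut $(L,R)$ of infinite cofinality on both sides, $\bar c$ empty, and $b_1<d<b_2$ all realizing the cut. Then $d\ind^f_M M$, and $\tp(b_1b_2/Md)$ is finitely satisfiable in $M$, so $b_1b_2\ind^u_M d$. Every other hypothesis you use holds, including BS-triviality and the fact that, by Lemma~\ref{lem: ext base for ind seq}(1), $\langle d,b_1b_2\rangle$ is an $\ind_M$-sequence over a conjugate of any full set. Yet $d\nind^f_M b_1b_2$, since $b_1<x<b_2$ divides over $M$.

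So no ``free amalgam in both orders'' statement for $\tp(d\bar b/DE)$ can come from fullness, Lemma~\ref{lem: types in ind seqs}(1) and invariance alone. Your fallback also gives no mechanism by which $\bar c\ind_C D\bar b d$ would exclude the alternative $db_0\ind_C D$ at each stage. The full-set and indiscernible-sequence machinery of Step 2 is a detour here.

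\textbf{The missing idea is to compare $d$ with a second copy $d'$ of it.} The paper proceeds as follows.
\begin{enumerate}
\item Using $d\ind_C D$, choose a model $M\supseteq D$ with $d\ind_C M$, and then $d'\equiv_M d$ with $d'\ind_C M\bar c$. In particular $d'\ind_C D\bar c$.
\item Choose $\bar b'\equiv_{D\bar cd}\bar b$ with $\bar b'\ind_C D\bar cdd'$. By invariance, still $d\nind_C D\bar b'$ and $\bar c\ind_C D\bar b'd$.
\item A bounded standard $\ind$ implies $\ind^i$, and $\Aut(\cU/M)\subseteq\Autf(\cU/C)$. So Remark~\ref{rem:invariant-transfer} gives $d\equiv_{D\bar b'}d'$, hence $d'\nind_C D\bar b'$.
\item Apply BS-triviality a third time, to $\bar c\ind_C D\bar b'd$ and the singleton $d'$. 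The alternative $\bar cd'\ind_C D\bar b'd$ contradicts $d'\nind_C D\bar b'$ by monotonicity, so $\bar c\ind_C D\bar b'dd'$.
\item The same Remark with $e=\bar c$ gives $d\equiv_{D\bar c}d'$. This contradicts $d\nind_C D\bar c$ together with $d'\ind_C D\bar c$.
\end{enumerate}
Step 5 is exactly where the hypothesis $d\nind_C D\bar c$ enters. Boundedness is used only through $\ind\Rightarrow\ind^i$, not through full sets.
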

\begin{proof}
Replacing $D$ by $CD$, we may assume $C\subseteq D$.
Suppose towards a contradiction that the three hypotheses hold, but $
 d\nind_C D \bar b$ and $d\nind_C D \bar c$. Applying BS-triviality to $\bar b\ind_C D \bar c$ and the
singleton $d$, we get that either $ \bar b d\ind_C D \bar c$ or $\bar b\ind_C D \bar c d$. The first case  contradicts $d\nind_C D \bar c$, so we must have  $\bar b\ind_C D \bar c d$. Similarly, interchanging $\bar{b}, \bar{c}$, we thus have
\begin{equation}\label{strict:relative-moved-right}
 \bar b\ind_C D \bar c d \  \textrm{ and } \  \bar c\ind_C D \bar b d.
\end{equation}

As $d\ind_C D$, by right extension we can choose a small model
$M\supseteq D$ with $d\ind_C M$. By right extension again we find $d'$ so that
\begin{equation}\label{strict:relative-dprime}
 d'\equiv_M d \ \textrm{ and } \ d'\ind_C M \bar c.
\end{equation}
In particular, $d'\ind_C D \bar c$. 
Using \eqref{strict:relative-moved-right}, we 
choose $\bar b'$ so that 
\begin{equation}\label{strict:relative-bprime-prime}
 \bar b'\equiv_{D \bar c d} \bar b \ \textrm{ and } \ \bar b'\ind_C D \bar c dd'.
\end{equation}
By \eqref{strict:relative-moved-right}, \eqref{strict:relative-bprime-prime}, the assumption $d\nind_C D\bar b$ and invariance, we have 
\begin{equation}\label{strict:relative-bprime}
 d\nind_C D \bar b' \ \textrm{ and } \ \bar c\ind_C D \bar b'd.
\end{equation}

By the first part of \eqref{strict:relative-dprime}, the second part of \eqref{strict:relative-bprime-prime} and Remark \ref{rem:invariant-transfer} (using Fact \ref{fac: props of forking etc in NIP}(3)) we have  $d\equiv_{D \bar b'}d'$. Thus, by the first part of \eqref{strict:relative-bprime} and invariance, 
\begin{equation}\label{strict:relative-dprime-forks}
 d'\nind_C D \bar b'.
\end{equation}
We apply BS-triviality over $C$ once more, now to
$\bar c\ind_C D \bar b'd$ and the singleton $d'$. The case 
$\bar cd'\ind_C D \bar b'd$ would contradict
\eqref{strict:relative-dprime-forks}. Hence we have $
 \bar c\ind_C D \bar b'dd'$, so in particular $\bar c \ind_C Ddd'$. 
By Remark \ref{rem:invariant-transfer} again (with $e:=\bar c$) we get 
$d\equiv_{D \bar c}d'$. By invariance, this gives a contradiction: $d\nind_C D \bar c$ by assumption, but $d'\ind_C D \bar c$ (by \eqref{strict:relative-dprime} and monotonicity).
\end{proof}

From now on we work with $\ind^f$ over arbitrary small sets, using Theorem~\ref{arb:forking-bs}, but the analogous results below also hold if we use $\ind^u$ over models instead (in view of the assumption of Theorem \ref{strict:relative-semilinearity} and earlier results).

\begin{defn}
	Given small sets $C, D$ and singletons $a,b$, we define $a \trianglerighteq^f_{C,D} b$ if $a \nind^f_{C} b D$. If $C = D$, we simply write $a \trianglerighteq^f_{C} b$ for $\trianglerighteq^f_{C,C}$, and just $a \trianglerighteq^f b$ when $C = \emptyset$.
\end{defn}

\begin{proposition}\label{strict:preorder}
Assume $T$ is monadically NIP and $C,D$ are arbitrary small sets. 
\begin{enumerate}
	\item The relation $\trianglerighteq^f_{C,D}$ is transitive on singletons and reflexive on $X_C := \cU \setminus \acl(C)$. Hence the relation $
 a\sim^f_{C,D}b
 : \Leftrightarrow 
\left( a\trianglerighteq^f_{C,D}b\ \text{and}\
 b\trianglerighteq^f_{C,D}a \right)$ 
is an equivalence relation on $X_C$, and $\trianglerighteq^f_{C,D}$
induces a partial order on $X_C/\sim^f_{C,D}$.

\item Let $ Y_{C,D}:=\{a\in X_C:a\ind^f_C D\}$ and 
 $Z_{C,D}:=X_C\setminus Y_{C,D}$. Then $Y_{C,D}$ is a union of $\sim^f_{C,D}$-classes, and the partial order $
 \bigl(Y_{C,D}/\sim^f_{C,D},\trianglerighteq^f_{C,D}\bigr)$  is \emph{semilinear} (i.e.~the set of predecessors of every element is linearly ordered:   for $a,b,c\in Y_{C,D}$, if $a\trianglerighteq^f_{C,D} b$ and $a\trianglerighteq^f_{C,D} c$, then $b\trianglerighteq^f_{C,D} c$ or $c\trianglerighteq^f_{C,D} b$). If $Z_{C,D}$ is nonempty, it is one equivalence class,
strictly above every class represented in $Y_{C,D}$.
\end{enumerate}

\end{proposition}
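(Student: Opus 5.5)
The plan is to derive everything directly from BS-triviality of $\ind^f$ over $C$ (Theorem~\ref{arb:forking-bs}) and from Theorem~\ref{strict:relative-semilinearity}. That theorem applies because monadically NIP theories are NIP, so $\ind^f=\ind^i$ is a standard bounded preindependence relation (Facts~\ref{fac: props of forking etc in NIP} and \ref{fac: forking in NIP equals inv}). Throughout, "$\bar b \ind^f_C \bar a$" with $\bar a$ containing $D$ is handled by finite character: BS-triviality for finite tuples transfers to small sets on the right. This is because a forking formula only mentions finitely many parameters, so any failure for a small set is witnessed by a finite subtuple.

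For (1), reflexivity comes first. If $a\notin\acl(C)$, then the formula $x=a$ divides over $C$: take a $C$-indiscernible sequence of pairwise distinct realizations of $\tp(a/C)$, which exists since $\tp(a/C)$ is non-algebraic. Hence $a\nind^f_C aD$, i.e.\ $a\trianglerighteq^f_{C,D}a$. For transitivity, suppose $a\trianglerighteq b$ and $b\trianglerighteq c$ but $a\ind^f_C cD$. Apply BS-triviality to $a\ind^f_C cD$ and the singleton $b$. In the first case $ab\ind^f_C cD$, and monotonicity gives $b\ind^f_C cD$, contradicting $b\trianglerighteq c$. In the second case $a\ind^f_C cDb$, and monotonicity gives $a\ind^f_C bD$, contradicting $a\trianglerighteq b$. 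The statements about $\sim^f_{C,D}$ and the induced partial order are then formal.

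For (2), compare $Y_{C,D}$ with $Z_{C,D}$. If $b\in Z_{C,D}$, then $b\nind^f_C D$, so by monotonicity $b\nind^f_C aD$ for every $a$, i.e.\ $b\trianglerighteq a$ for all $a\in X_C$. In particular any two elements of $Z_{C,D}$ are $\sim$-equivalent, so $Z_{C,D}$ is contained in a single class. Conversely, let $a\in Y_{C,D}$ and $b\in Z_{C,D}$, and apply BS-triviality to $a\ind^f_C D$ and the singleton $b$. The case $ab\ind^f_C D$ is impossible, since it would give $b\ind^f_C D$. Hence $a\ind^f_C Db$, i.e.\ $a\not\trianglerighteq b$. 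This shows simultaneously that $Y_{C,D}$ is a union of classes and that the $Z$-class lies strictly above every class meeting $Y_{C,D}$.

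Semilinearity is where the real work lies, and it is already packaged in Theorem~\ref{strict:relative-semilinearity}. Take $a,b,c\in Y_{C,D}$ with $a\trianglerighteq b$ and $a\trianglerighteq c$, and suppose $b\not\trianglerighteq c$ and $c\not\trianglerighteq b$, i.e.\ $b\ind^f_C Dc$ and $c\ind^f_C Db$. Together with $a\ind^f_C D$ (as $a\in Y_{C,D}$), Theorem~\ref{strict:relative-semilinearity} with $d:=a$, $\bar b:=b$, $\bar c:=c$ yields $a\ind^f_C Db$ or $a\ind^f_C Dc$, contradicting one of the assumptions. The only point needing care is checking the hypotheses of that theorem (boundedness and BS-triviality over the arbitrary set $C$). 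Since these follow from NIP and Theorem~\ref{arb:forking-bs}, I expect no genuine obstacle beyond the reflexivity/dividing argument in (1).
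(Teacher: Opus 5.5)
Your proposal is correct and follows the paper's proof essentially step for step. Reflexivity comes from $x=a$ dividing over $C$ for non-algebraic $a$. Transitivity and the $Y_{C,D}$/$Z_{C,D}$ comparison each use one application of BS-triviality (Theorem~\ref{arb:forking-bs}) plus monotonicity. Semilinearity comes from Theorem~\ref{strict:relative-semilinearity} with $d:=a$. Your finite-character reduction for sets on the right is valid but not strictly needed, since the paper's tuples are already allowed to be small, possibly infinite, tuples.
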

\begin{proof}
(1) Reflexivity on $X_C$: if $a \ind^f_{C} a$, in particular the formula $x = a$ does not divide over $C$, hence necessarily $a \in \acl(C)$. Transitivity: assume $a \trianglerighteq^f_{C,D} b$ and $b \trianglerighteq^f_{C,D} c$, but not $a \trianglerighteq^f_{C,D} c$. That is $a \nind^f_{C} bD$ and $b \nind^f_{C} cD$, but $a \ind^f_{C} cD$. By BS-triviality (Theorem~\ref{arb:forking-bs}), either $a b \ind^f_{C} cD$ or $a \ind^f_{C} bcD$. In either case, we get a contradiction by monotonicity. 

(2) For $a,b,c\in Y_{C,D}$, suppose $a\trianglerighteq^f_{C,D}b,c$.
If $b,c$ are incomparable, then $b\ind^f_C Dc$ and $c\ind^f_C Db$ --- contradicting 
Theorem~\ref{strict:relative-semilinearity} (with $d:=a$). This proves semilinearity  
once we verify that $Y_{C,D}$ is a union of classes.

Every $z\in Z_{C,D}$ satisfies $z\nind^f_C Da$ for every singleton
$a$, by monotonicity. Thus all elements of $Z_{C,D}$ lie in one class,
above every class of the full quotient.
For $a\in Y_{C,D}$ and $z\in Z_{C,D}$, apply the BS-triviality to
$a\ind^f_C D$ and $z$. The alternative $az\ind^f_C D$ is impossible,
so $a\ind^f_C Dz$. Hence $a$ is neither equivalent to nor above $z$.
It follows that $Y_{C,D}$ is a union of classes, and the class of
$Z_{C,D}$ is strictly greatest when it exists (in particular, the
restricted quotient is downward closed in the full quotient).
\end{proof}

\begin{remark}
	For $D=C$, the domain $Y_{C,C}$ consists of the nonalgebraic
singletons $a$ with $a\ind^f_C C$. It equals $X_C$ when $C$ is an
extension base.
\end{remark}

\subsection{Infinite chains and stability}

We show that (monadic) stability is characterized by the absence of infinite chains in the forking order. 

\begin{lemma}\label{strict:unstable-nip-chain}
Suppose $T$ is unstable and NIP. There are a small model $M$, a
singleton $d$, and an $Md$-indiscernible sequence of singletons
$(a_i:i\in\Q)$ with $a_i \notin M$, $ a_i\ind^u_M d a_{<i}$ and $a_i\nind^f_M da_j$ for all $i<j \in \Q$. In particular,  $a_i \in Y_{M,\{d\}}$, $
 a_i\trianglerighteq^f_{M,\{d\}}a_j$ and 
 $\neg\bigl(a_j\trianglerighteq^f_{M,\{d\}}a_i\bigr)$ 
 for all $i<j$.
\end{lemma}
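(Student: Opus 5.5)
The plan is to build the sequence as a coheir Morley sequence over a model, using a formula in one variable that witnesses instability. The extra singleton $d$ is then chosen to turn the order relation along the sequence into a genuine dividing formula.

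\emph{Step 1: a one-variable order formula.} Since $T$ is unstable, some formula $\varphi(x,y)$ with $x$ a single variable is unstable: if all formulas in one free variable are stable, then $T$ is stable. Since $T$ is NIP, this gives a small model $M$ and a global type $p(x)$ in one variable, finitely satisfiable in $M$, together with a Morley sequence along which $\varphi$ behaves like a strict order. I would obtain this from a non-constant indiscernible sequence of singletons that is not totally indiscernible. Concretely, after adding parameters from $M$ to $\varphi$, the target is $\models\varphi(a_i,a_j)\Leftrightarrow i<j$. Here $a_i\models p|_{Ma_{<i}}$; in the final step this restriction will be enlarged to $p|_{Mda_{<i}}$.

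\emph{Step 2: choose $d$ and build the sequence.} I would pick $d$ as one further realization of a suitable coheir over $M$ that sits at the far end of the order. The sequence is then built by induction on $\mathbb{Q}$, with compactness, by taking $a_i\models p|_{Mda_{<i}}$. This gives the first two requirements of the statement:
\begin{itemize}
\item $a_i\ind^u_M da_{<i}$ and $a_i\notin M$, because $p$ is finitely satisfiable in $M$ and non-realized;
\item $(a_i:i\in\mathbb{Q})$ is $Md$-indiscernible, because $p$ is $M$-invariant.
\end{itemize}
The requirement $a_i\in Y_{M,\{d\}}$ then follows, since $\ind^u\Rightarrow\ind^f$ by Fact~\ref{fac: props of forking etc in NIP}. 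Likewise $\neg(a_j\trianglerighteq^f_{M,\{d\}} a_i)$ follows from $a_j\ind^u_M da_i$ for $i<j$.

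\emph{Step 3: the dividing formula (main obstacle).} It remains to show $a_i\nind^f_M da_j$ for $i<j$. Here I would take the formula $\chi(x;a_j,d)$ stating that $x$ lies strictly between $a_j$ and $d$ in the $\varphi$-order, suitably oriented. The aim is to show that $\chi(x;a_j,d)$ divides over $M$. For this I need an $M$-indiscernible sequence of conjugates $(a_j^k,d^k)_k$ whose intervals are pairwise disjoint. Such a sequence can come from a Morley sequence in the type of the pair $(a_j,d)$ over $M$, arranged so that $d^k$ lies below $a_j^{k+1}$. Producing this sequence, and orienting $\varphi$ correctly so that both $\chi(a_i;a_j,d)$ holds and the intervals are disjoint rather than nested, is the step I expect to be the main difficulty. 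If it fails as stated, my fallback is the characterization from Fact~\ref{fac: forking in NIP equals inv}(1) that forking in NIP theories equals failure of Lascar invariance. With that, it would suffice to show that $\tp(a_i/Mda_j)$ has no $M$-invariant global extension, using that $a_j$ and its images under $\Aut(\cU/M)$ fixing $d$ can be placed on both sides of $a_i$.

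Once $a_i\nind^f_M da_j$ is established, the relation $a_i\trianglerighteq^f_{M,\{d\}} a_j$ holds by definition, and the statement follows.
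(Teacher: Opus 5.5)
\textbf{Gap.} Your proposal leaves unproved the one step that carries the content of the lemma, namely $a_i\nind^f_M da_j$ for $i<j$.

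Your primary route is to show that an ``interval'' formula $\chi(x;a_j,d)$ divides over $M$, and you leave it open. It is not clear how to do it directly. The formula coming from instability is only asymmetric along the Morley sequence; it is not a transitive order. So there is no a priori reason why $M$-conjugate ``intervals'' should be pairwise disjoint rather than nested or overlapping, and you do not construct the required $M$-indiscernible sequence.

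Your fallback does not work as stated either. Let $q$ be a global $M$-invariant type extending $\tp(a_i/Mda_j)$. A conjugate $a_j'\equiv_{Md}a_j$ is not among the parameters of $\tp(a_i/Mda_j)$, and $a_i$ need not realize $q$ over $a_j'$. So the side of $a_j'$ on which $a_i$ lies imposes no condition on $q$. Invariance only gives $\varphi(x,a_j)\in q\Leftrightarrow\varphi(x,a_j')\in q$, which is no contradiction. Relatedly, your $d$ is only described as a realization of ``a suitable coheir'' at ``the far end'', and you never use that $d\equiv_M a_j$.

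\textbf{Missing idea.} Make $d$ itself the needed $M$-conjugate of $a_j$. Index a Morley sequence of the same coheir $p$ over $M$ by $\{*\}+\Q$, and let $d$ be its $*$-term. Then $d\models p|_M$, so $d\equiv_M a_j$. Take $\theta\in L(M)$ with $p^{\otimes 2}(x_0,x_1)\vdash\theta(x_0,x_1)\wedge\neg\theta(x_1,x_0)$. Since $(d,a_i)$ and $(a_i,a_j)$ are Morley pairs, you get $\models\neg\theta(a_i,d)\wedge\theta(a_i,a_j)$.

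No global $M$-invariant type can contain both $\neg\theta(x,d)$ and $\theta(x,a_j)$. Over a model, Lascar invariance is just $M$-invariance, so Fact~\ref{fac: forking in NIP equals inv}(1) gives $a_i\nind^f_M da_j$ at once. No dividing sequence needs to be built.

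\textbf{Step~1 also needs justification.} The same route supplies it. An unstable theory has a non-definable $1$-type over some model $M_0$. Its global coheir is then non-definable, since by invariance it would otherwise be definable over $M_0$. Hence it is not generically stable, and under NIP it does not commute with itself. That non-commutation produces $\theta$ over some model $M\supseteq M_0$.

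With $d$ chosen this way, the rest of your Step~2 goes through as written: finite satisfiability, $Md$-indiscernibility, $a_i\in Y_{M,\{d\}}$, and $\neg(a_j\trianglerighteq^f_{M,\{d\}}a_i)$.
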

\begin{proof}
As $T$ is unstable, there exist a model $M_0$ and a nondefinable type $q(x)\in S_1(M_0)$ (this is standard, see e.g.~\cite[Theorem~2.60]{simon2015guide}). 
Let $p(x)$ be a global extension of $q$
finitely satisfiable in $M_0$. Then $p$ is $M_0$-invariant and not
definable (indeed, if $p$ were definable, 
it would be definable over $M_0$ by $M_0$-invariance, so its restriction $q$ would be definable, a contradiction). Consider the Morley product $p^{\otimes 2}(x_0,x_1) = p(x_1) \otimes p(x_0)$ (i.e.~$p^{\otimes 2} = \tp(a,b/\cU)$ for some/any $a \models p$ and $b \models p|_{\cU,a}$, see e.g.~\cite[Section 2.2.1]{simon2015guide}; note that $\otimes$ is well-defined on invariant types, and it is not symmetric in general). Recall that a global type is \emph{generically stable} if it is both definable and finitely satisfiable (over some small model). Assuming NIP, a global invariant type $p$ is generically stable if and only if it $\otimes$-commutes with itself, i.e.~if $p(x_0) \otimes p(x_1) = p(x_1) \otimes p(x_0)$ (see e.g.~\cite[Theorem~2.29]{simon2015guide}).
Since $p$ is not definable, we can thus choose a small model $M \supseteq M_0$
and a formula $\theta(x_0,x_1)\in L(M)$ such that
\begin{equation}\label{strict:asymmetric-morley-pair}
 p^{\otimes 2}(x_0,x_1)\vdash
 \theta(x_0,x_1)\wedge\neg\theta(x_1,x_0).
\end{equation}
\noindent Take the linear order  $I := \{*\}+\Q$, and let $
 (d)+(a_i:i\in\Q)$ be a \emph{Morley sequence} of $p$ over $M$ in $\cU$ indexed by
$I$. Since $p$ is finitely satisfiable in $M_0\subseteq M$, we have $a_i\ind^u_M d a_{<i}$ for all $i$ (see e.g.~\cite[Section 2.2.1]{simon2015guide}). 
The sequence is $M$-indiscernible, so $(a_i:i\in\Q)$ is
$Md$-indiscernible. All elements of the sequence are pairwise distinct (by \eqref{strict:asymmetric-morley-pair}) and have the same type over $M$, so in particular $a_i \notin M$. Fix $i<j \in \Q$. Using \eqref{strict:asymmetric-morley-pair} we have $\models 
 \neg\theta(a_i,d) \land \theta(a_i,a_j)$, and $d\equiv_M a_j$. If $a_i\ind^f_M da_j$, then $\tp(a_i/Mda_j)$ would have a global
extension invariant over $M$ by Fact \ref{fac: forking in NIP equals inv}(1). 
 Such an extension cannot contain
both $\neg\theta(x,d)$ and $\theta(x,a_j)$, as $d\equiv_M a_j$.
Therefore $a_i\nind^f_M da_j$.
Conversely, $a_j\ind^u_M da_i$ by the above, so in particular $a_j\ind^f_M da_i$.
\end{proof}

\begin{theorem}\label{strict:instability-characterization}
For a monadically NIP theory $T$, the following are equivalent:
\begin{enumerate}
\item $T$ is unstable.
\item For some small sets $C,D$, the partial order 
$Y_{C,D}/\sim^f_{C,D}$ is not an antichain.
\item For some small sets $C,D$, the partial order $Y_{C,D}/\sim^f_{C,D}$  has an infinite chain of distinct elements.
\item For some small model $M$ and singleton $d$, the partial order 
$Y_{M,\{d\}}/\sim^f_{M,\{d\}}$ contains a strict chain
$([a_i]:i\in\Q)$, with $[a_i]\trianglerighteq^f_{M,\{d\}}[a_j]$
for all $i<j$.
\end{enumerate}
\end{theorem}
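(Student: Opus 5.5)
We prove the cycle of implications (1)$\Rightarrow$(4)$\Rightarrow$(3)$\Rightarrow$(2)$\Rightarrow$(1).

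For (1)$\Rightarrow$(4), note that monadically NIP implies NIP. Apply Lemma~\ref{strict:unstable-nip-chain} to get $M$, $d$ and $(a_i:i\in\Q)$. The lemma already gives $a_i\in Y_{M,\{d\}}$: indeed $a_i\notin M=\acl(M)$, and $a_i\ind^u_M d$ implies $a_i\ind^f_M d$. It also gives $a_i\trianglerighteq^f_{M,\{d\}}a_j$ and $\neg(a_j\trianglerighteq^f_{M,\{d\}}a_i)$ for $i<j$. Hence the classes $[a_i]$ are pairwise distinct and form a strict chain indexed by $\Q$ in the quotient order of Proposition~\ref{strict:preorder}.

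(4)$\Rightarrow$(3) is immediate, since a model and a singleton are small sets. (3)$\Rightarrow$(2) is also immediate, since a chain with two distinct elements is not an antichain.

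The real work is (2)$\Rightarrow$(1). We argue by contraposition: assume $T$ is stable and show that the quotient is an antichain for all $C,D$. Suppose $a,b\in Y_{C,D}$ with $a\trianglerighteq^f_{C,D}b$, that is, $a\nind^f_C bD$. We aim to show $b\nind^f_C aD$, so that $a\sim^f_{C,D}b$ and distinct comparable classes cannot exist.

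Since $T$ is stable, forking is symmetric over arbitrary sets, and we use this as standard; stable theories are simple, and forking is symmetric in simple theories. By base monotonicity and transitivity we have the standard equivalence
\[
a\ind^f_C bD \iff \bigl(a\ind^f_C D\ \text{and}\ a\ind^f_{CD} b\bigr),
\]
which uses transitivity of forking in stable theories. Since $a\ind^f_C D$ holds because $a\in Y_{C,D}$, the hypothesis $a\nind^f_C bD$ forces $a\nind^f_{CD} b$. Symmetry over $CD$ then gives $b\nind^f_{CD} a$. Applying the equivalence again, with $b\ind^f_C D$, yields $b\nind^f_C aD$, i.e.\ $b\trianglerighteq^f_{C,D}a$.

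The main obstacle is to justify this equivalence over the arbitrary base $C$ in full, including the left-to-right direction $a\ind^f_C bD\Rightarrow a\ind^f_{CD}b$. That direction is just base monotonicity. The right-to-left direction is left transitivity in the mirrored form, obtained by applying symmetry to right transitivity. Both hold in stable, indeed simple, theories over all sets, since every set is an extension base by Fact~\ref{fac: ext bases stuff}(2).
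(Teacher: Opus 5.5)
Your proof is correct. Your implications (1)$\Rightarrow$(4)$\Rightarrow$(3)$\Rightarrow$(2) are the same as the paper's; the difference is in (2)$\Rightarrow$(1).

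Assuming stability, the paper combines symmetry of $\ind^f$ with left total triviality (Corollary~\ref{cor: arbitrary-left-total}, which comes from BS-triviality and hence from monadic NIP) to get right total triviality. This gives, for $a\in Y_{C,D}$, the equivalence $a\ind^f_C Db \iff a\ind^f_C b$, and symmetry over $C$ then finishes the argument.

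You instead move the base to $CD$. Using $a\ind^f_C D$, transitivity and base monotonicity give $a\ind^f_C bD \iff a\ind^f_{CD} b$, and you then apply symmetry over $CD$.

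What each route buys:
\begin{itemize}
\item Your route uses only stability, so it shows that $\trianglerighteq^f_{C,D}$ is symmetric on $Y_{C,D}$ in any stable theory, without appeal to monadic NIP.
\item The paper's route reuses its own triviality results. It also gives slightly more: on $Y_{C,D}$ the relation $\trianglerighteq^f_{C,D}$ coincides with forking over $C$ alone, namely $a\nind^f_C b$.
\end{itemize}

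A small wording correction: the right-to-left direction of your equivalence is transitivity in the right-hand argument. You obtain it by applying symmetry to the left transitivity that $\ind^f$ satisfies in any theory, not the other way round. The real input is symmetry of forking in stable theories. The fact that all sets are extension bases is not what drives it.
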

\begin{proof}
Lemma~\ref{strict:unstable-nip-chain} gives $(1)\Rightarrow(4)$,
and $(4)\Rightarrow(3)\Rightarrow(2)$ is immediate.

For $(2)\Rightarrow(1)$, suppose $T$ is stable. Then, using symmetry of $\ind^f$ and left total triviality (Corollary~\ref{cor: arbitrary-left-total}), hence we also have right total triviality, for any $a,b\in Y_{C,D}$ we have 
\[
 a\ind^f_C Db\quad\Longleftrightarrow\quad a\ind^f_C b
 \quad\Longleftrightarrow\quad b\ind^f_C a
 \quad\Longleftrightarrow\quad b\ind^f_C Da.
\]
Thus the restricted preorder is symmetric, so its quotient is an
antichain. This contradicts (2). \end{proof}

\begin{remark}
	Note that $a \ind^f_{\emptyset} b$ for any singletons $a \neq b$ in DLO, hence $\trianglerighteq^f_{\emptyset}$ is an antichain. This shows that the introduction of an additional parameter $d$ in $\trianglerighteq^f_{\emptyset, d}$ is necessary to detect the failure of stability.
\end{remark}

\section{$\omega$-categorical monadic NIP theory with no finite extension bases}
\subsection{Trees with circularly ordered open cones}\label{sec: Trees with circularly ordered open cones}

\begin{defn}
	By a \emph{meet tree} we mean a structure $(T, \leq, \land)$ so that $\leq$ is a partial order such that for every $x$, the set $\{y \in T : y \leq x \}$ is linearly ordered and for any $x,y \in T$, $\{z \in T : z \leq x,y \}$ has a greatest element $x \land y$.
\end{defn}

\begin{fact}(See \cite[Fact 4.5]{estevan2021non}, \cite[Section 2.3.1]{simon2015guide} or \cite[Lemma 3.14]{chernikov2022semi}.)
	The (universal) theory of meet trees $T_{\TTr}$ has a model completion $T_{\DT}$ which is determined by saying that: $(\leq, \land)$ is a meet tree; for every $x$, the set $\{y : y \leq x\}$ is a dense linear order with a maximal element and no minimal one; for every $x$ there are infinitely many open cones above $x$. The theory $T_{\DT}$ is complete, $\omega$-categorical and has quantifier elimination; it is the theory of the Fra\"iss\'e limit of the class of finite meet trees.
\end{fact}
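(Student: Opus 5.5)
The statement to prove is the Fact about $T_{\DT}$. It is cited from the literature, so I will only sketch the standard argument: identify $T_{\DT}$ as the theory of the Fra\"iss\'e limit of finite meet trees, and get quantifier elimination from a back-and-forth.

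\emph{Step 1: Fra\"iss\'e class.} Work in the language $\{\leq,\land\}$, so that substructures are meet-closed. The class $\mathcal K$ of finite meet trees is closed under substructures and has countably many isomorphism types. I will verify amalgamation, which implies joint embedding, since the empty tree embeds everywhere. Given $A\subseteq B_1,B_2$, I amalgamate one point at a time. Let $b\in B_1\setminus A$ and $A'=A\cup\{b\}$ meet-closed. Then $b$ is determined up to isomorphism over $A$ by two pieces of data:
\begin{enumerate}
\item the position of $b$ relative to $A$: between two elements, below the root, inside a branch, or as a new branching point;
\item which cones of $A$ lie above $b$.
\end{enumerate}
This configuration can be realized in the free amalgam of trees. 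Alternatively, take the union of $B_1$ and $B_2$ glued along $A$, with the order generated by the two orders. The points to check are that the result is again a tree and that meets are computed consistently. This is routine but needs care when a new point of $B_1$ and a new point of $B_2$ lie on the same branch between the same elements of $A$. I will resolve such ties by placing $B_1$'s point below $B_2$'s, and make the two points' new branches incomparable.

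\emph{Step 2: axiomatizing the limit.} Let $K$ be the Fra\"iss\'e limit. I show $K\models T_{\DT}$, and conversely that every countable model of $T_{\DT}$ has the extension property for $\mathcal K$. For the converse, given finite $A\subseteq B$ with $|B\setminus A|$ minimal, and an embedding $A\hookrightarrow N\models T_{\DT}$, I need to realize the one-point extension (together with possibly one extra meet point). The cases are:
\begin{enumerate}
\item a new point strictly between two elements, or below the root: use density, and no minimal element below each point;
\item a new leaf or branch point inside an open cone not containing elements of $A$: use that there are infinitely many cones above each point, together with density.
\end{enumerate}
Back-and-forth between countable models then gives $\omega$-categoricity and completeness. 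It also gives that partial isomorphisms between finite substructures extend, which yields quantifier elimination. The same one-point extension argument, applied inside saturated models, shows that every model of the universal theory $T_{\TTr}$ embeds into a model of $T_{\DT}$. Combined with quantifier elimination, this gives model completeness, so $T_{\DT}$ is the model completion.

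\emph{Main obstacle.} The delicate step is the case analysis of the one-point extension. A new element may force a new meet point, so one must extend by at most two points at once and check that all the required cones are available. The axiom ``each $\{y:y\leq x\}$ has a maximum and no minimum, and is dense'' also has to match the intended models. I would check it on the Fra\"iss\'e limit directly: any finite meet tree can be extended below its root and between any comparable pair.
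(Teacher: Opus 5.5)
Your sketch is correct. The paper gives no proof of this Fact and only cites it, and your Fra\"iss\'e-limit plus back-and-forth argument (amalgamation for finite meet trees, the extension property derived from the axioms, then $\omega$-categoricity, QE and model completion) is exactly the standard route in those references.

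One simplification: if you always take $b$ to be $\leq$-minimal in $B\setminus A$, then $A\cup\{b\}$ is automatically meet-closed, since any $a\wedge b<b$ lies in $A$ by minimality. So you only need the three genuine one-point extensions (below the root, between consecutive elements, in a fresh cone), and the two-point case you flag as the main obstacle never arises.
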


\begin{defn}
	For $a,b>c$, we write $a\sim_{c}b$ if $a\land b>c$, note that for
each $c$ this is an equivalence relation, with classes corresponding
to the open cones above $a$.
\end{defn}

We recall that a Fra\"iss\'e class $\mathcal{K}$ satisfies the \emph{Strong Amalgamation Property}, or \emph{SAP}, if: for every $A,B,C \in \mathcal{K}$ and embeddings $f_1: A \to B, f_2: A \to C$ there exist $D \in \mathcal{K}$ and embeddings $g_1: B \to D, g_2: C \to D$ so that $g_1 \circ f_1 = g_2 \circ f_2$, and $\im(g_1) \cap \im(g_2) = \im(g_2 \circ f_2)$ (which is $= \im(g_1 \circ f_1)$); and in JEP, the images of $A,B$ in $C$ are disjoint.
\begin{fact}
	The (universal) theory $T_{\TCirc}$ of circular orders  in the language $L = \{C(x,y,z)\}$ has a model completion $T_{\DCirc}$ of dense circular orders. It is the theory of the Fra\"iss\'e limit of the class of finite circular orders (= finite models of $T_{\TCirc}$), which satisfies SAP. It is $\omega$-categorical and has quantifier elimination.
\end{fact}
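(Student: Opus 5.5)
The statement is a standard fact about dense circular orders. I would prove it by reducing to the Fra\"iss\'e theory of finite circular orders.

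\textbf{Fra\"iss\'e class.} First I check that the class $\mathcal{K}$ of finite models of $T_{\TCirc}$ is a Fra\"iss\'e class in the finite relational language $L=\{C\}$.
\begin{itemize}
\item Heredity is immediate, because $T_{\TCirc}$ is universal.
\item For joint embedding and strong amalgamation, I use that a finite circular order on $n$ points is the same as a cyclic arrangement. Cutting at any point $a_0$ gives a linear order with least element $a_0$, via $x<_{a_0}y \Leftrightarrow C(a_0,x,y)$ for $x,y\neq a_0$. Conversely, a linear order $<$ induces $C(x,y,z)$ iff ($x<y<z$ or $y<z<x$ or $z<x<y$).
\item Given $f_1\colon A\to B$ and $f_2\colon A\to C$ with $A$ nonempty, fix $a_0\in A$ and cut $B$ and $C$ at the images of $a_0$. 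This yields linear orders on $B$ and $C$ extending a common linear order on $A$.
\item Linear orders have SAP: merge the finitely many intervals between consecutive points of $A$, placing the new points of $B$ before those of $C$ in each interval. Taking the induced circular order of the amalgam gives strong amalgamation of the circular orders, since cutting at $a_0$ recovers the original circular orders on $B$ and $C$.
\item When $A=\emptyset$ (JEP, disjointly), simply concatenate the two linear orders obtained by cutting anywhere.
\end{itemize}

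\textbf{The limit and its theory.} Fra\"iss\'e's theorem then gives a countable ultrahomogeneous limit $\mathcal{D}$ with age $\mathcal{K}$. Since the language is finite relational, and there are only finitely many $n$-point circular orders up to isomorphism for each $n$, $\Th(\mathcal{D})$ is $\omega$-categorical and has quantifier elimination; this is the standard consequence of ultrahomogeneity plus uniform local finiteness. Next I identify $\Th(\mathcal{D})$ with $T_{\DCirc}$, the axioms of circular orders plus density (for $x\neq y$ there is $z$ with $C(x,z,y)$), at least two points, and no further endpoint conditions (circular orders have no endpoints).
\begin{itemize}
\item Using one-point extensions, the extension property of $\mathcal{D}$ shows $\mathcal{D}\models T_{\DCirc}$.
\item Conversely, a back-and-forth argument shows any countable model of $T_{\DCirc}$ satisfies the extension property for finite substructures: a one-point extension of a finite $A$ only specifies which arc between cyclically consecutive points of $A$ the new point lies in, and density provides such a point. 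So any countable model is isomorphic to $\mathcal{D}$, and $T_{\DCirc}$ is $\aleph_0$-categorical. Having no finite models, it is complete by Łoś–Vaught.
\end{itemize}

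\textbf{Model completion.} Every model of $T_{\TCirc}$ embeds into a model of $T_{\DCirc}$: by compactness it suffices to embed finite circular orders, which embed into $\mathcal{D}$. Together with quantifier elimination, this gives that $T_{\DCirc}$ is the model completion of the universal theory $T_{\TCirc}$. Here I use the standard criterion: a complete-for-each-diagram model companion with QE of a universal theory with amalgamation is its model completion.

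The only step needing care is the amalgamation via cutting. One must check that the circular order induced by the merged linear order restricts correctly on $B$ and $C$, which holds because $C$ is invariant under rotation of the cut point. Everything else is routine Fra\"iss\'e theory.
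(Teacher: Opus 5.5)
Your argument is correct. The paper does not prove this statement; it records it as a standard Fact about dense circular orders. Your route is the standard verification:
\begin{itemize}
\item cut at a point $a_0$ of the base to reduce joint embedding and strong amalgamation of finite circular orders to strong amalgamation of finite linear orders with least element $a_0$;
\item then apply Fra\"iss\'e's theorem, the back-and-forth argument via the one-point extension property, and \L{}o\'s--Vaught.
\end{itemize}

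Two small clarifications.
\begin{enumerate}
\item The restriction check in the amalgamation step works for a simpler reason than ``rotation invariance''. The circular order induced by the cut order $<_{a_0}$ is exactly the original $C$, and passing from a linear order to its induced circular order commutes with restriction to subsets.
\item The ``standard criterion'' in your last step is worded circularly, since completeness of each diagram is the definition of model completion. What you actually use, and all you need, is the following. $T_{\DCirc}$ contains the universal axioms of $T_{\TCirc}$, it has QE, and every $M\models T_{\TCirc}$ embeds into one of its models. Hence $T_{\DCirc}\cup\mathrm{Diag}(M)$ is consistent and, by QE, complete. Amalgamation for $T_{\TCirc}$ is not needed in this step.
\end{enumerate}
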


Now we consider the theory of dense meet trees so that for every element $x$, the set of open cones above $x$ is equipped with a dense circular order. This is a special case of the more general construction in \cite[Section 4.3]{estevan2021non}:
\begin{defn}
	Let $L^{\ast} := \{\leq, \land, C^{\ast}(x, y_1, y_2, y_3) \}$, and consider the theory $T^{\ast}_{\forall}$ consisting of the axioms $T_{\TTr}$ for meet trees and:
	\begin{enumerate}
		\item $\forall x, y_0, y_1, y_2, y'_0, y'_1, y'_2 \left( C^{\ast}(x, y_0, y_1, y_2) \land \bigwedge_{i<3}  y_i \sim_{x} y'_i\right) \rightarrow C^{\ast}(x, y'_0, y'_1, y'_2)$,
		\item $\forall x, y_0, y_1, y_2  \ C^{\ast}(x,y_0,y_1,y_2) \rightarrow \bigwedge_{i<3} x < y_i$,
		\item for every $x$, $C^{\ast}(x, -, -, -)$ is a circular order on the set of open cones above $x$.
	\end{enumerate}
\end{defn}

\begin{fact}
	The class of finite models of $T^{\ast}_{\forall}$ is a uniformly locally finite Fra\"iss\'e class (\cite[Proposition 4.9]{estevan2021non}), and let $T^{\ast}$ be the theory of the limit. Then $T^{\ast}$ is $\omega$-categorical, NIP, has quantifier elimination, and is the model completion of $T^{\ast}_{\forall}$ (\cite[Corollary 4.11]{estevan2021non}). For every $M \models T^{\ast}$ and $c \in M$,  we consider the $L$-structure $M_c := \{a / \sim_{c} : a \in M, a > c\}$ with $C^{M_c}(x/\sim_c, y/ \sim_c, z/\sim_c) :\Leftrightarrow \left( x,y,z > c \land (C^{\ast})^{M}(c, x,y,z) \right)$. Then $M_c \models T_{\DCirc}$ (\cite[Proposition 4.13]{estevan2021non}).
\end{fact}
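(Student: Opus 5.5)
The plan is to establish the four claims in order: Fraïssé class, then $\omega$-categoricity, quantifier elimination and model completion, then NIP, then $M_c \models T_{\DCirc}$. The main tool is the standard Fraïssé machinery for uniformly locally finite classes, adapted to the amalgamation of meet trees together with the circular orders on cones.

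\textbf{Fraïssé class.} Let $\mathcal K$ be the class of finite models of $T^{\ast}_{\forall}$.
\begin{enumerate}
\item Hereditary property: this is immediate, since $T^{\ast}_{\forall}$ is universal in a language containing $\land$, so substructures are meet-closed.
\item Uniform local finiteness: the meet-closure of $n$ points in a meet tree has at most $2n-1$ elements, and $C^{\ast}$ adds no new elements.
\item JEP: this follows from AP over the empty structure, or directly by placing the two trees above a new common root.
\item AP, which is the real work. Given $A \subseteq B, C$ in $\mathcal K$, I would first strongly amalgamate the underlying meet trees. Each $b \in B\setminus A$ either lies in some cone of the tree $A$ or branches off a branch of $A$ at a new meet point, and similarly for $C$. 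I would interleave the new branch points of $B$ and of $C$ on a common branch in some fixed linear way, consistent with density not being required in finite structures.
\item Then define $C^{\ast}$ at each node $x$ of the amalgam $D$:
  \begin{itemize}
  \item If $x$ is a new node coming from only one side, or a node created fresh in $D$, the cones above $x$ come from a single structure or are new. Take that structure's circular order, extended arbitrarily.
  \item If $x \in A$, the cones above $x$ in $D$ are the cones of $A$ together with new cones from $B$ and from $C$. Cones of $B$ and of $C$ that contain elements of $A$ are identified through $A$. Amalgamate the two circular orders using SAP for finite circular orders over the circular order on $A$'s cones.
  \end{itemize}
\end{enumerate}
The main obstacle is checking that this construction respects axiom (1), i.e.\ invariance of $C^{\ast}$ under $\sim_x$. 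This reduces to the cone bookkeeping: each open cone above $x$ in $D$ must be represented unambiguously by the cones of $B$ and $C$ it contains. It also requires that no two cones merge in $D$, which the strong tree amalgamation guarantees.

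\textbf{$\omega$-categoricity, quantifier elimination, model completion.} With the limit $\mathcal M$ in hand, ultrahomogeneity plus uniform local finiteness in a finite relational-plus-$\land$ language gives $\omega$-categoricity and QE. Since $\mathcal K$ is exactly the class of finite models of the universal theory and has AP, every model of $T^{\ast}_{\forall}$ embeds in a model of $T^{\ast}$. The extension property makes models of $T^{\ast}$ existentially closed, so $T^{\ast}$ is the model completion.

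\textbf{NIP.} Using QE, I would count types over finite sets. The type of a singleton $a$ over a finite meet-closed $B$ is determined by three pieces of data:
\begin{enumerate}
\item the branch of $B$ at which $a$ attaches, and its cut on that branch;
\item whether $a$ lies in an existing cone or a new one;
\item for each of the $O(|B|)$ relevant nodes, the position of $a$'s cone among the cones of $B$ in that node's circular order. This position is a cut, of which there are $O(|B|)$.
\end{enumerate}
This gives polynomially many $1$-types over $B$, hence NIP. Alternatively, one could interpret $T^{\ast}$ in a known NIP colored-tree setting.

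\textbf{$M_c \models T_{\DCirc}$.} First, $C^{M_c}$ is well defined by axiom (1) and is a circular order by axiom (3). For density and the absence of trivial configurations, given cones $[y_0], [y_1]$ above $c$, I would build a finite extension with a new cone strictly between them in the circular order at $c$. Realizing it in $M$ via the extension property of the Fraïssé limit, and using elementarity, gives density in every model $M \models T^{\ast}$.
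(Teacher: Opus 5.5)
The one step that does not go through as written is the NIP count. In item (3) you let the position of $a$'s cone vary independently at $O(|B|)$ nodes, with $O(|B|)$ options at each. That only bounds the number of $1$-types over $B$ by $|B|^{O(|B|)}=2^{O(|B|\log|B|)}$, which does not contradict IP: a formula $\varphi(x;\bar y)$ with IP only forces $2^n$ types over a set of size $n|\bar y|$.

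What is missing is that there is a real choice at a single node only. For $a\notin B$ let $m:=\max_{b\in B}(a\wedge b)$ and fix $b_0\in B$ with $a\wedge b_0=m$. Then:
\begin{itemize}
\item $\langle Ba\rangle=B\cup\{a,m\}$.
\item Every $x\in B$ with $x\le a$ satisfies $x\le m$, and for $x<m$ we have $a\sim_x b_0$. So every $C^{\ast}$-instance centered at such $x$ is already determined by the position of $m$ relative to $B$.
\item No $C^{\ast}$-instance centered at $a$ holds, and if $m\notin B$ there are at most two cones above $m$ in $\langle Ba\rangle$.
\end{itemize}
The only free datum arises when $m\in B$. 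Then $a>m$, and the cone of $a$ above $m$ contains no point of $B$. The datum is the gap, in the circular order on the at most $|B|$ cones of $B$ above $m$, into which that cone falls. There are $O(|B|)$ positions for $m$: a point of $B$, or a point on one of the edges of $B$ or below its root. This gives $O(|B|^2)$ quantifier-free, hence by QE complete, $1$-types over $B$, and NIP follows.

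Apart from this, your route differs from the paper's in that the paper proves nothing here. It quotes \cite{estevan2021non} (Proposition 4.9, Corollary 4.11, Proposition 4.13), whereas you rebuild the statement from Fra\"iss\'e theory. Your amalgamation, $\omega$-categoricity/QE/model-completion and $M_c\models T_{\DCirc}$ parts are fine in outline, with two small corrections.
\begin{itemize}
\item What keeps a new cone of $B$ and a new cone of $C$ above a node of $A$ apart is the choice of the \emph{free} amalgam, not strongness. In the free amalgam the underlying set is $B\cup C$, no new meet points are added, new points on a common edge of $A$ are interleaved, and new cones above nodes of $A$ are kept disjoint. A strong amalgam may instead merge two such cones under a new meet point, and then the two circular orders at that node can be incompatible.
\item For the model completion, the point is that every model of $T^{\ast}_{\forall}$ has its finitely generated substructures in $\mathcal K$. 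By compactness it therefore embeds in an elementary extension of the limit. So $T^{\ast}_{\forall}$ is the universal part of a theory with QE, and such a theory is the model completion of its universal part.
\end{itemize}
Your alternative route to NIP is in effect what the paper does later for $\wideT$, and hence for its reduct $T^{\ast}$. Choosing linear orders on the cones that induce $C^{\ast}$ makes a model definable on singletons in a tree with ordered cones. Theorem \ref{thm: dp-min of conv ord trees} shows that such trees are dp-minimal, monadically NIP and distal. This needs the orders on cones, not just colored trees, but it yields much more than NIP; your count is more elementary.
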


The proof below will show that every \emph{non-empty} finite set in $T^{\ast}$ is not an $\ind^{f}$-extension base. To ensure that $\emptyset$ is also not an $\ind^{f}$-extension base, we will additionally name a single constant. Namely, we choose a model $M\models T^*$ and any element $r\in M$. We let $\wideT \eqdef \Th(M,r)$ in the expanded language $\widehat L=\{\leq,\wedge,C^*,r\}$.  Note that still $\wideT$ eliminates quantifiers in $\widehat L$, is NIP and $\omega$-categorical. We work in a monster model $\cU \models \wideT$.

\begin{lemma}\label{lem:lifting}
Let $A$ be finite, $B=\langle A\cup\{r\}\rangle$, and let $a\in B$ be maximal (in the tree order). Suppose $\bar u=(u_1,\ldots,u_n), \bar v=(v_1,\ldots,v_n)$ 
are tuples in $\cU$ such that $u_i,v_i > a$, the $u_i$ lie in pairwise distinct $\sim_a$-classes, and the $v_i$ lie in pairwise distinct $\sim_a$-classes. If
\[
    \qftp_{\circleT}\big(u_1/\sim_a,\ldots,u_n/\sim_a\big)
    =
    \qftp_{\circleT}\big(v_1/\sim_a,\ldots,v_n/\sim_a\big)
\]
over the empty set in the circular order $\cU_a$, then $\bar u\equiv^{\widehat L}_B \bar v$.  In particular, every empty-set indiscernible sequence of distinct cone-classes in $\cU_a$ lifts to a $B$-indiscernible sequence of representatives in $\cU$.
\end{lemma}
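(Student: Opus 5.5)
We will use quantifier elimination for $\wideT$ in $\widehat L$: it suffices to show that the map fixing $B$ pointwise and sending $u_i\mapsto v_i$ extends to an isomorphism of the generated substructures $\langle B\bar u\rangle\to\langle B\bar v\rangle$. Here $B=\langle A\cup\{r\}\rangle$ is a finite meet-closed subtree containing $r$, and by uniform local finiteness all these substructures are finite. So the plan is to describe $\langle B\bar u\rangle$ explicitly and check that its quantifier-free type over $B$ depends only on the circular configuration of the classes $u_i/\sim_a$.

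\textbf{Step 1: computing meets.} Because $a$ is maximal in $B$ and every $u_i>a$, for $b\in B$ we have $u_i\wedge b=a\wedge b$. Indeed, $u_i\wedge b\ge a\wedge b$ always holds. If $u_i\wedge b>a\wedge b$, then $u_i\wedge b$ and $a$ both lie below $u_i$, so they are comparable. Since $u_i\wedge b\le b$ and $a\wedge b$ is the largest element below both $a$ and $b$, we get $u_i\wedge b\not\le a$, hence $u_i\wedge b>a$. But then $b\ge u_i\wedge b>a$, contradicting the maximality of $a$. Next, since the $u_i$ lie in pairwise distinct $\sim_a$-classes, $u_i\wedge u_j=a$ for $i\ne j$. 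Consequently $\langle B\bar u\rangle=B\cup\{u_1,\dots,u_n\}$ as a set, and the analogous statement holds for $\bar v$.

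\textbf{Step 2: the map $B\cup\bar u\to B\cup\bar v$ preserves $\le$, $\wedge$ and $C^*$.} For $\le$ and $\wedge$ this is immediate from Step 1. In particular $u_i\ne b$ for $b\in B$, since $b\not> a$; for the same reason no element of $B$ lies above $u_i$, and $b\le u_i$ iff $b\le a$. For $C^*(x,y_1,y_2,y_3)$, axiom (2) forces $x<y_1,y_2,y_3$, so we only need to consider base points $x\in B\cup\bar u$ lying strictly below at least three elements of $B\cup\bar u$. There are three cases.
\begin{enumerate}
\item $x=u_i$: nothing lies strictly above $u_i$, so all such instances are false on both sides.
\item $x\in B$ with $x<a$: by Step 1, every $u_i$ lies in the same $\sim_x$-class as $a$, namely $u_i\wedge a=a>x$. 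By axiom (1), each $u_i$ can therefore be replaced by $a$, reducing the instance to one inside $B$.
\item $x=a$: no element of $B$ lies above $a$, so all arguments are among the $u_i$. In that case $C^*(a,u_i,u_j,u_k)$ is exactly $C(u_i/\sim_a,u_j/\sim_a,u_k/\sim_a)$ in $\cU_a$, and this is preserved by the hypothesis on $\qftp_{\circleT}$.
\end{enumerate}
Finally, the constant $r$ lies in $B$ and is fixed. Quantifier elimination then gives $\bar u\equiv_B\bar v$.

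\textbf{The ``in particular''.} Let $(c_j)$ be an indiscernible sequence of distinct classes in $\cU_a$. Choose representatives $u_j>a$ with $u_j/\sim_a=c_j$. For $j_1<\dots<j_n$ and $k_1<\dots<k_n$, indiscernibility gives equality of the circular quantifier-free types of $(c_{j_1},\dots,c_{j_n})$ and $(c_{k_1},\dots,c_{k_n})$, and the main statement then yields equality of the types of the corresponding representatives over $B$.

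The main point requiring care is Step 1. It is where maximality of $a$ is used: it ensures that no new meets are created and that the elements of $B$ see all the $u_i$ only through $a$. This is what makes the case analysis for $C^*$ collapse to the single circular order $\cU_a$.
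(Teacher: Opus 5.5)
Your proposal is correct and follows the paper's proof essentially step for step. Both arguments reduce, by quantifier elimination, to quantifier-free types over $B$; both use maximality of $a$ to get $u_i\wedge b=a\wedge b$ and $u_i\wedge u_j=a$; and both check $C^*$ by cases on the center. The one center you don't list explicitly is $x\in B$ incomparable with $a$. It follows at once from your observation that $b\le u_i$ iff $b\le a$, together with axiom (2), which is exactly how the paper treats it.
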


\begin{proof}
By quantifier elimination for $T^*$, it is enough to compare quantifier-free types over $B$. If $u_i>a$ and $b\in B$, then $ u_i\wedge b = a\wedge b$ by maximality of $a$.
 Also, if $i\neq j$, then $u_i\wedge u_j=a$, because $u_i$ and $u_j$ are in distinct open cones above $a$. The same statements hold for the $v_i$. Thus the quantifier-free tree type over $B$ is the same for $\bar u$ and $\bar v$.

Let now $c$ be a possible center occurring in the finite substructure generated by $B\bar u$, which is just $B\cup\{u_1,\ldots,u_n\}$ by the above. If $c=a$, $C^*(a;y_1,y_2,y_3)$ can only hold for $y_i \in B\cup\{u_1,\ldots,u_n\}$ above $a$, so  among the $u_i$. In that case its truth is exactly the truth of the corresponding circular-order relation in $\cU_a$. By assumption the tuples of cone-classes of $\bar u$ and $\bar v$ have the same quantifier-free circular-order type. If $c<a$, then all elements $u_i$ lie in the same open cone above $c$, namely the cone containing $a$. Thus replacing $u_i$ by $v_i$ does not change any $\sim_c$-class appearing over $B$, and it does not change the truth of any $C^*$-formula with center $c$. If $c\in B$ and $c\not\leq a$, then $c<u_i$ is impossible. Hence any $C^*$-formula with center $c$ and one of the $u_i$ as an argument is false, and the same holds for the $v_i$. Finally, if $c=u_i$ for some $i$, then none of the generated elements lies strictly above $u_i$, hence every $C^*$-formula with center $u_i$ and arguments from the generated substructure is false, and the same holds for the corresponding $v_i$.
\end{proof}

\begin{prop}
No finite subset of $\cU \models \wideT$ is an $\ind^f$-extension base.
\end{prop}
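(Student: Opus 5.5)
The plan is to exhibit, for a given finite $A$, a complete type over $A$ that forks over $A$. By Fact \ref{fac: forking = div NTP2} (NIP), it is enough to find a single element $e$ and a formula in $\tp(e/A)$ that divides over $A$. We use the constant $r$ to pass from $A$ to $B=\langle A\cup\{r\}\rangle$. This is a finite meet subtree, by uniform local finiteness, and it is nonempty. Since $B\subseteq \dcl(A)$, a formula over $B$ dividing over $B$ is equivalent to a formula over $A$ dividing over $A$: indiscernibility over $A$ is the same as indiscernibility over $B$. So it suffices to show that $B$ is not an extension base.

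Choose a maximal element $a\in B$ in the tree order. Pick $e>a$ lying in an open cone above $a$ that contains no element of $B$; by maximality of $a$, every cone above $a$ has this property. Consider the formula $\varphi(x):=x>a$. I do not claim that $\varphi$ divides. Rather, I claim every completion of it over $B$, equivalently of $\tp(e/B)$, forks. By quantifier elimination and the computation in Lemma \ref{lem:lifting}, $\tp(e/B)$ is just ``$x>a$''; its behaviour relative to the circular order on $\cU_a$ is what matters.

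The key step uses the dense circular order $\cU_a\models T_{\DCirc}$, where every point forks over $\emptyset$. Take $u_1,u_2$ with $u_i>a$ in distinct cones above $a$, different from the cone of $e$, chosen so that the cone-class of $e$ lies in the arc between $u_1/\sim_a$ and $u_2/\sim_a$. Let $\psi(x;u_1,u_2):= x>a\wedge C^*(a;u_1,x,u_2)$, i.e.\ ``the cone of $x$ lies in the open arc from $u_1$ to $u_2$''. Similarly, let $\psi'(x;u_1,u_2)$ say that it lies in the complementary arc from $u_2$ to $u_1$. Then $\varphi(x)\vdash\psi\vee\psi'\vee x\sim_a u_1\vee x\sim_a u_2$. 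It remains to show that each disjunct divides over $B$.

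In a dense circular order, an arc formula divides over $\emptyset$: take an $\emptyset$-indiscernible sequence of pairwise disjoint arcs $(u_1^k,u_2^k)$ in cyclic order, which exists by density. The formula $x\sim_a u_1$ divides via any sequence of distinct cones. Lemma \ref{lem:lifting} is exactly what lifts these $\emptyset$-indiscernible sequences of cone-classes in $\cU_a$ to $B$-indiscernible sequences of representatives in $\cU$, with each term having the same type over $B$ as $(u_1,u_2)$. Inconsistency of the relevant instances is then immediate, since the cones or arcs are disjoint. Hence $x>a$ forks over $B$, so $\tp(e/B)$ has no global nonforking extension, and $B$ (hence $A$) is not an $\ind^f$-extension base.

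The main obstacle I expect is the bookkeeping in the lifting step. One must check that the disjoint-arc sequence in $\cU_a$ has each pair with the same circular qf-type as $(u_1,u_2)$, which is automatic since any two distinct points in a circular order have the same qf-type. One must also check that Lemma \ref{lem:lifting} applies to tuples $(u_1^k,u_2^k)_k$ with all coordinates in pairwise distinct cones, which holds by construction. Finally, the case $A=\emptyset$ is exactly why $r$ was named: $B\ni r$ is then nonempty, so the maximal element $a$ exists.
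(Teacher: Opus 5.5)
Your proposal is correct and follows the paper's proof essentially verbatim. You pass to $B=\langle A\cup\{r\}\rangle$, take $a\in B$ maximal, cover $x>a$ by $x\sim_a u_1$, $x\sim_a u_2$, $C^{\ast}(a,u_1,x,u_2)$ and $C^{\ast}(a,u_2,x,u_1)$, and show each disjunct divides over $B$ by lifting disjoint-cone and disjoint-arc indiscernible sequences from $\cU_a$ via Lemma \ref{lem:lifting}.

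The only slip is your opening sentence. A consistent formula over $\dcl(A)$ never divides over $A$, so what you need (and what you in fact prove) is a formula in $\tp(e/A)$ that \emph{forks} over $A$. The appeal to Fact \ref{fac: forking = div NTP2} is therefore unnecessary.
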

\begin{proof}
Let $A\subseteq\cU$ be finite (possibly empty). Let $B := \langle A\cup\{r\}\rangle \neq \emptyset$, and choose $a\in B$ maximal in the tree order. Then $a\in\dcl_{\widehat L}(A)$, thus the formula $a < x$ does not divide over $A$. We show that $a<x$ forks over $A$. Choose any $u,v>a$ in distinct open cones above $a$. Then
\begin{gather*}
	x > a \  \vdash \  (x \sim_{a} u) \,  \lor \, (x \sim_{a} v) \, \lor \,  C^{\ast}(a, u,x,v) \,  \lor \,  C^{\ast}(a, v, x, u),
\end{gather*}
and we claim that each of the disjuncts divides over $A$ (and $B$).

Working in $M_a \models T_{\DCirc}$, we can choose an $\emptyset$-indiscernible sequence of pairwise distinct elements $(u_i/\sim_{a})_{i < \omega}$ with $u_0/ \sim_{a} = u/ \sim_{a}$. Then, by Lemma \ref{lem:lifting}, $(u_i)_{i <\omega}$ with $u_0 = u$ is a $B$-indiscernible sequence in $\cU$, and $u_i \perp u_j$ for all $i \neq j$. As $\{x > u_i : i < \omega\}$ is $2$-inconsistent, $x \sim_{a} u$ divides over $B$.

Similarly, in any model of $T_{\DCirc}$, given any two points $p \neq q$, there is an $\emptyset$-indiscernible sequence of pairs $(p_i, q_i)_{i \in \omega}$ with $(p_0,q_0) = (p,q)$ and so that $\{C(p_i, x, q_i)\}_{i <\omega}$ is $2$-inconsistent (and all elements among $p_i,q_i$ are pairwise distinct). By Lemma \ref{lem:lifting} again, we can thus find a $B$-indiscernible sequence of pairs $(u_i, v_i)_{i < \omega}$ with $u_i,v_i > a$ in $\cU$ so that $(u_0,v_0) = (u,v)$ and $\{C(u_i / \sim_{a}, x, v_i/\sim_a) : i < \omega\}$ is $2$-inconsistent in $\cU_{a}$, hence $\{C^{\ast}(a, u_i, x, v_i) : i < \omega\}$ is $2$-inconsistent in $\cU$. It follows that $C^{\ast}(a, u,x,v) $ divides over $B$.
\end{proof}

\subsection{Convexly ordered colored meet trees}\label{sec: convexly ordered trees}

We will show that the theory $\wideT$ constructed in Section \ref{sec: Trees with circularly ordered open cones} is in fact monadic NIP, hence dp-minimal, and distal. We take an opportunity to prove a more general result about linearly ordered expansions of \emph{arbitrary} meet trees first.

We let $(T, <, \land, (P_i)_{i \in I})$ be a \emph{colored meet tree}, i.e.~a meet tree  expanded by arbitrary unary predicates $P_i$. It is proven in \cite{parigot1982theories} that the theory of any meet tree is monadically NIP, hence any colored meet tree is dp-minimal. Dp-minimality of colored trees is reproved explicitly via an analysis of indiscernible sequences in \cite[Section 4]{simon2011dp}.

\begin{defn}
\begin{enumerate}
\item We add a ternary relation $x\preceq_{z}y$ such that for each $z$,
it defines a linear ordering on the set of all open cones above $a$.
\item For each $a\in T$, let $S_{a}$ be a new sort of all open cones above
$a$, equipped with unary predicates induced by all unary $a$-definable
sets and the linear ordering $\preceq_{a}$. We denote by $\tp_{S_{a}}$
the type of a tuple of cones in $S_{a}$ in this language.
\end{enumerate}
\end{defn}

\begin{remark}\label{rem: def lin ord in convex tree}
	Note that a global linear order $\prec$ on $T$ extending the partial tree order $<$ 
is definable from $<$ and $x\preceq_{z}y$: we let $a \prec b$ if either $a < b$, or $a\perp b$ and $a\prec_{a\land b}b$. In particular, every open cone is convex with respect to $\prec$.
\end{remark}

We will show that every structure $(T, <, \land, (P_i)_{i \in I}, x \prec_{z} y)$ is dp-minimal, hence also monadically NIP since any expansion of such structure by unary predicates is also of the same form (note that we are not in the context of \cite{estevan2021non} here since nothing is assumed to be $\omega$-categorical).   
We adapt the analysis in \cite[Section 4]{simon2011dp}, combining the cases
of trees and linear orders. When we talk about types, we mean types in the full language $\{ <, \land, (P_i)_{i \in I}, x \prec_{z} y \}$ unless explicitly specified.

\begin{lemma}
	\label{lem:=000020comp}Assume that $a\in T$, $\bar{b}$ is a tuple
from the closed cone above $a$, and $\bar{c}$ is a tuple from its
complement. Then $\tp\left(\bar{b}/a\right)\cup\tp\left(\bar{c}/a\right)\vdash\tp\left(\bar{b},\bar{c}/a\right)$.
\end{lemma}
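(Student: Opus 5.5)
The plan is to prove the statement by a back-and-forth argument. Equivalently, given $\bar b'\equiv_a\bar b$ and $\bar c'\equiv_a\bar c$ with $\bar b'$ in the closed cone above $a$ and $\bar c'$ in its complement, I will show that the partial map $a\bar b\bar c\mapsto a\bar b'\bar c'$ is elementary. Since we do not have quantifier elimination in this language, I will show instead that the family of such maps, taken over all $\omega$-saturated models (or over the monster), has the back-and-forth property. Writing $K_a:=\{x: x\geq a\}$, we may close $\bar b$ under $\land$ (it stays in $K_a$) and close $a\bar c$ under $\land$. Any meet $x\land y$ with $x\in K_a$, $y\notin K_a$ equals $a\land y$, and this lies in $\dcl(a\bar c)$, so the substructure generated by $a\bar b\bar c$ is the union of the one generated by $a\bar b$ and the one generated by $a\bar c$.

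\textbf{Atomic relations.} First I check that the quantifier-free type of $a\bar b\bar c$ is determined by the separate types. For $x\in K_a$ and $y\notin K_a$, the relation $x<y$ never holds, and $y<x$ holds if and only if $y<a$. The meet is $x\land y=a\land y$. For the cone order with center $z$:
\begin{itemize}
\item if $z\in K_a$, all elements outside $K_a$ are not above $z$, so these instances are determined;
\item if $z\notin K_a$, then all of $K_a$ lies in the single open cone above $z$ containing $a$ (when $z<a$), or is not above $z$ at all. Hence any instance $x\preceq_z y$ with $x\in K_a$ can be replaced by the same instance with $a$ in place of $x$.
\end{itemize}
Thus every atomic formula reduces to one involving only $a\bar b$ or only $a\bar c$. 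Unary predicates are trivially fine.

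\textbf{Back-and-forth.} Next I run the back-and-forth inductively. Given a new element $e$, either $e\in K_a$ or $e\notin K_a$. In the first case, realize $\tp(\bar b e/a)$ over $\bar b'$ inside $K_a$. This is possible because the $a$-type of $\bar b e$ contains the formula $e\geq a$, and by the hypothesis $\tp(\bar b'/a)=\tp(\bar b/a)$. The other case is symmetric. Since the new tuples again lie in $K_a$ and its complement respectively, the induction hypothesis is preserved. This proves the statement for all formulas by induction on formula complexity. Concretely: if $\bar b\bar c\equiv^{\mathrm{qf}}_a\bar b'\bar c'$ holds whenever the separate types agree, then every existential step can be witnessed on the corresponding side.

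\textbf{Main obstacle.} The real difficulty is to do this cleanly without quantifier elimination, i.e.\ to justify the induction over formulas. I would state the claim as: for all $n$ and all $\bar b,\bar b',\bar c,\bar c'$ as above, $\tp_n(\bar b/a)=\tp_n(\bar b'/a)$ and $\tp_n(\bar c/a)=\tp_n(\bar c'/a)$ imply $\tp_n(\bar b\bar c/a)=\tp_n(\bar b'\bar c'/a)$, where $\tp_n$ denotes formulas of quantifier depth at most $n$. In the inductive step a witness $e$ falls on one side, and the matching depth-$(n-1)$ witness exists on that same side by the depth-$n$ agreement. Care is needed for the cone-order atomic case with center inside $\bar c$ and arguments in $K_a$. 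There one uses that $a$ itself appears in the $\bar c$-side type, which I will ensure by always including $a$ in both tuples.
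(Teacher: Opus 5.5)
Your proposal is correct and follows essentially the same route as the paper. You first check that the quantifier-free types over $a$ of $\bar b\bar c$ and $\bar b'\bar c'$ agree. You then run back-and-forth, placing each new element on the side where it lives (the closed cone above $a$ or its complement) and realizing the corresponding one-sided type over $a$. Your case analysis of the atomic relations and your quantifier-depth formalization, which uses saturation to realize the one-sided witness, make explicit what the paper's short proof leaves to the reader.
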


\begin{proof}
By back-and-forth. Note that $\tp\left(\bar{b}/a\right)=\tp\left(\bar{b}'/a\right) 
\, \land \, \tp\left(\bar{c}/a\right)\cup\tp\left(\bar{c}'/a\right)$
implies that $\qftp\left(\bar{b},\bar{c}/a\right)=\qftp\left(\bar{b}',\bar{c}'/a\right)$.
If $d$ is a new element in the closed cone above $a$, let $d'$
be such that $\bar{b} d\equiv_{a}\bar{b}'d'$. If $d$ is in its complement,
take $d'$ such that $\bar{c} d \equiv_{a}\bar{c}'d'$.
\end{proof}
\begin{lemma}
\label{lem:=000020in=000020cones}The $\tp_{S_{a}}$-type of a tuple
$\bar{b}=\left(b_{1},\ldots,b_{n}\right)$ in $S_{a}$ is determined
by 
$$\bigcup_{b_{i}\preceq_{a}b_{j}}\tp_{S_{a}}\left(b_{i},b_{j}\right).$$
\end{lemma}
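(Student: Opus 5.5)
The plan is to treat $S_a$ as a colored linear order $(S_a,\preceq_a,(Q_j)_j)$, where the $Q_j$ are the unary predicates induced by $a$-definable sets, and to prove the statement as a composition (Feferman--Vaught for ordered sums) result, using Ehrenfeucht--Fra\"iss\'e games. Every formula involves only finitely many predicates, so for each formula we may fix a finite sublanguage $L_0\subseteq\{\preceq_a\}\cup\{Q_j\}$ and a quantifier depth $k$. It then suffices to show the following: if $\bar b,\bar b'$ have the same pair types $\tp_{S_a}(b_i,b_j)=\tp_{S_a}(b'_i,b'_j)$ for all $b_i\preceq_a b_j$, then for every $k$, Duplicator wins the $k$-round $L_0$-game between $(S_a,\bar b)$ and $(S_a,\bar b')$.

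First, the pair types with $i=j$ record the colors of the $b_i$. The pair types for comparable pairs record which $b_i$ are equal and the order between them. So after removing repetitions and reindexing, we may assume $b_1\prec_a\cdots\prec_a b_n$, and the same order pattern holds for $\bar b'$. Write $I_0=(-\infty,b_1)$, $I_m=(b_m,b_{m+1})$ for $0<m<n$, and $I_n=(b_n,\infty)$, and define the $I'_m$ analogously.

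Next, for each $m$ the interval $I_m$ is definable from at most two consecutive points $b_m,b_{m+1}$. Hence for every $L_0$-sentence $\sigma$ of depth $\le k$, the relativization $\sigma^{I_m}$ is a formula in at most the two variables $b_m,b_{m+1}$. Such a formula belongs to $\tp_{S_a}(b_m,b_{m+1})$ or, at the ends, to $\tp_{S_a}(b_1)$ or $\tp_{S_a}(b_n)$. Therefore $I_m\equiv_{k}I'_m$ as $L_0$-structures, for every $m$.

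Finally, I would combine the pieces by the standard ordered-sum argument. Duplicator plays $k$-round winning strategies on each pair $(I_m,I'_m)$ in parallel. When Spoiler plays an element of $I_m$, she answers in $I'_m$ using the strategy for that piece. When Spoiler plays $b_m$, she answers $b'_m$. The resulting partial map preserves the colors, since these are inside the pieces or given by the colors of the $b_i$. It also preserves $\preceq_a$, because the pieces are ordered the same way and the order within each piece is preserved. So it is a partial isomorphism, and this gives $\bar b\equiv\bar b'$ in $S_a$.

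The main obstacle is the relativization step: making precise that the theory of each interval, as an $L_0$-structure, really is coded in the pair type of its endpoints. This is where one uses that $\tp_{S_a}$ is a complete type in the full language of $S_a$, which has quantifiers over $S_a$. The rest is routine bookkeeping.
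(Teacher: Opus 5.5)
Your proof is correct. Its first step is exactly the paper's reduction: with its own language, $S_a$ is just a colored linear order $(S_a,\preceq_a,(Q_j)_j)$, so the lemma is a statement about colored linear orders. The paper stops there and cites the corresponding fact for colored orders (\cite[Proposition 4.2]{simon2011dp}). You instead prove that fact directly with a Feferman--Vaught style Ehrenfeucht--Fra\"iss\'e argument. Relativizing any $L_0$-sentence of depth $\le k$ to the interval between $\preceq_a$-consecutive $b_m,b_{m+1}$ gives a formula in $\tp_{S_a}(b_m,b_{m+1})$. Hence corresponding intervals are $L_0$-elementarily equivalent, and Duplicator's piecewise strategies combine into a winning strategy for $(S_a,\bar b)$ versus $(S_a,\bar b')$. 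Your handling of the details is sound: the finite sublanguage $L_0$ makes the EF theorem applicable, the end intervals are controlled by the $1$-types, and empty intervals cause no trouble because nonemptiness of $(b_m,b_{m+1})$ is recorded in the pair type. Your route makes the lemma self-contained, and it shows slightly more than stated: only the pair types of consecutive entries of $\bar b$ are needed. The paper's route is shorter because it outsources exactly this argument.
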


\begin{proof}
By the corresponding fact for colored linear orders (see \cite[Proposition 4.2]{simon2011dp}).
\end{proof}

\begin{prop}
\label{prop:=000020back-and-forth}Let $A=\left(a_{i}\right),B=\left(b_{i}\right)$
be two finite substructures of $T$ (i.e., they are $\land$-closed).
Then $\tp\left(A\right)=\tp\left(B\right)$ if and only if:
\begin{enumerate}
\item $A$ is isomorphic to $B$ as a substructure,
\item $\tp\left(a_{i},a_{j}\right)=\tp\left(b_{i},b_{j}\right)$ for any
$i,j$ such that $a_{i}\geq a_{j}$,
\item for any $a_{i},a_{j}>a_{k}$ such that $a_{i}\land a_{j}=a_{k}$ and
$a_{i}\succeq_{a_{k}}a_{j}$, we have $\tp_{S_{a_{k}}}\left(c_{i},c_{j}\right)=\tp_{S_{b_{k}}}\left(d_{i},d_{j}\right)$
with $c_{i},d_{i}$ the corresponding cones in $S_{a_{k}},S_{b_{k}}$
respectively.
\end{enumerate}
\end{prop}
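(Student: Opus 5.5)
The forward direction is immediate: if $\tp(A)=\tp(B)$ via the enumeration $a_i\mapsto b_i$, then (1), (2) and (3) hold. Each of these conditions is part of the type of the tuple, and the cone $c_i\in S_{a_k}$ is definable from $a_k,a_i$. For the converse I would argue by induction on $|A|$, in the style of \cite[Section 4]{simon2011dp}. The strategy is to peel off a minimal node and reduce to the two lemmas just proved.

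Assume (1)--(3). Since $A$ is a finite meet-closed set, it has a $\le$-least element, namely the meet of all its elements; call it $a_k$. By (1), the corresponding $b_k$ is least in $B$. The elements of $A\setminus\{a_k\}$ are all $>a_k$, and they split into finitely many open cones above $a_k$, say $c^1,\dots,c^m\in S_{a_k}$. Enumerate them so that $c^1\prec_{a_k}\dots\prec_{a_k}c^m$, and let $A_l$ be the part of $A$ lying in $c^l$. Each $A_l$ is meet-closed, because the meet of two elements in the same open cone above $a_k$ stays in that cone. By (1), $B$ decomposes the same way into cones $d^1,\dots,d^m$ with parts $B_l$, and by (3) these cones come in the same $\preceq_{b_k}$-order.

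The heart of the argument is to show, by induction on $l$, that $\tp(A_{\le l}/a_k)$ is carried to $\tp(B_{\le l}/b_k)$. For a single cone I would use the induction hypothesis applied to the substructure $A_l\cup\{a_k\}$, which is smaller than $A$ unless $m=1$. The case $m=1$ needs a separate treatment. There $A\setminus\{a_k\}$ lies in one cone $c$ and has its own least element $a'$. I would try to show that $\tp(A\setminus\{a_k\})$ together with $\tp(a_k,a')$ determines $\tp(A)$. This is an instance of Lemma~\ref{lem:=000020comp} applied at the element $a'$: the tuple $A\setminus\{a_k\}$ lies in the closed cone above $a'$, while $a_k$ lies in its complement. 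The needed type $\tp(a_k,a')$ is supplied by (2).

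To glue cones together, I would use a variant of Lemma~\ref{lem:=000020comp}. Conditioning on $a_k$ and on the $S_{a_k}$-type of the tuple of cones $(c^1,\dots,c^m)$, the types of the parts $A_l$ over $a_k c^l$ should determine the whole type. The $S_{a_k}$-type of the cone tuple is controlled by Lemma~\ref{lem:=000020in=000020cones}, which reduces it to pairwise types, and those pairwise types are exactly what (3) gives. The parts in different open cones then amalgamate by a back-and-forth argument like that of Lemma~\ref{lem:=000020comp}. If I add an element $e$, it either falls into one of the existing cones, in which case I extend within that cone, or it falls into a new cone $c'$, in which case I first choose the cone $d'$ realizing the right $S_{b_k}$-type over $d^1,\dots,d^m$ and then choose a representative in it with the right type over $b_k d'$.

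I expect the main obstacle to be justifying the claim that the full-language type of $A_l$ over $a_k$ depends only on its type over $a_k$ together with the cone $c^l$, with no interaction from the other cones. This is a quantifier-elimination style statement. I would first try to prove it via the same back-and-forth: automorphism-type arguments show that a partial isomorphism preserving $\preceq_{a_k}$-positions of cones and cone-internal types extends. The point is that all relations between different cones above $a_k$ are mediated only by $a_k$ and the ordering $\preceq_{a_k}$, and the definable unary predicates on $S_{a_k}$ are included in the $\tp_{S_{a_k}}$ language.
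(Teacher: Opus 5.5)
\textbf{Verdict: correct, but by a different route than the paper.} Your outline is correct.

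\textbf{What the paper does.} It runs a single back-and-forth whose invariant is exactly (1)--(3). Given $A,B$ satisfying (1)--(3) and a new point $a$ with $A\cup\{a\}$ meet-closed, it finds $b$ by cases on where $a$ sits: below all of $A$; in a fresh open cone above some $a_1$ (split by whether that cone lies between two occupied cones, beside one, or alone); or between two adjacent comparable points. It then checks that (1)--(3) persist, using Lemma~\ref{lem:=000020comp} and Lemma~\ref{lem:=000020in=000020cones}.

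\textbf{What you do instead.} You induct on $|A|$ along the tree.
\begin{enumerate}
\item Peel off the root $a_k$.
\item If there is only one occupied cone, apply Lemma~\ref{lem:=000020comp} at the next node $a'$.
\item Otherwise, glue the cone-parts with a cone-amalgamation lemma: the types of the parts over the root, together with the $S_{a_k}$-type of the tuple of occupied cones, determine the whole type. Lemma~\ref{lem:=000020in=000020cones} turns (3) into that full cone-tuple type.
\end{enumerate}

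\textbf{What each route buys.} Your route makes the division of labour transparent: (2) only feeds the base case and the root-to-stem link, and (3) only feeds the gluing. It also isolates a reusable amalgamation statement. The price is that the gluing lemma needs its own back-and-forth, so you do not avoid one. Your fresh-cone step is the same device as the paper's case (ii)(a): realize the cone type in $S_{b_k}$, then choose a representative with the right type over $b_k$ via the predicates ``the cone meets $\psi(x,b_k)$''.

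\textbf{One point to make explicit.} In the gluing back-and-forth, your two cases (new point in an occupied cone, or in a fresh cone above $a_k$) are not exhaustive. The new point may lie below $a_k$ or be incomparable to it. A back-and-forth confined to the closed cone above $a_k$ would not give types in $T$. Handle this as in Lemma~\ref{lem:=000020comp}: also carry a tuple $\bar y$ from the complement of the closed cone above $a_k$, with prescribed type over $a_k$. The maps remain partial isomorphisms, because:
\begin{itemize}
\item for $x$ in a cone-part and $y$ in $\bar y$, the meet $x\land y$ equals $a_k\land y$;
\item for $z<a_k$, the whole closed cone above $a_k$ lies in a single open cone above $z$;
\item no point of a cone-part lies above a $z$ that is neither $\leq a_k$ nor in that closed cone.
\end{itemize}
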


\begin{proof}
By back-and-forth. Given $A,B$ satisfying (1)\textendash (3) (which
implies that they have the same quantifier-free type), we want to
add a new element $a$. Without loss of generality $A\cup\left\{ a\right\} $ is a substructure
(otherwise first add the meets of $a$ with the elements of $A$).
Then one of the following cases must occur.

\noindent (i). The element $a$ is $\leq$-below all elements in $A$.\\
Let $a_{0}\in A$ be $\leq$-minimal. Let $b$ be such that $a_{0}a\equiv b_{0}b$.
For any $i$ we have $\tp\left(a_{i},a_{0}\right)=\tp\left(b_{i},b_{0}\right)$
and $\tp\left(a,a_{0}\right)=\tp\left(b,b_{0}\right)$. Then $\tp\left(a_{i},a\right)=\tp\left(b_{i},b\right)$
by Lemma \ref{lem:=000020comp}. Conditions (1) and (3) are obvious.

\noindent (ii). The element $a$ is $\leq$-above some element in $A$, say $a_{1}$,
so that the open cone $c$ above $a_{1}$ containing $a$ contains no
elements from $A$.

(a). Assume there are some $a_{2},a_{3}>a_{1}$ in $A$ such that $a_{2}\land a_{3}=a_{1}$
and $a_{2}\preceq_{a_{1}}a\preceq_{a_{1}}a_{3}$. Let $a_{2},a_{3}$
be the $\preceq_{a_{1}}$-maximal such and the $\preceq_{a_{1}}$-minimal
such, respectively, and let $c_{2},c_{3},d_{2},d_{3}$ be the corresponding
cones for the $a$'s and $b$'s. By assumption, let $d\in S_{b_{1}}$
be such that $\tp_{S_{a_{1}}}\left(c_{2},c_{3},c\right)=\tp_{S_{b_{1}}}\left(d_{2},d_{3},d\right)$.
Let $b\in d$ be such that $\tp\left(a/a_{1}\right)=\tp\left(b/b_{1}\right)$
(exists since $S_{a_{1}}$ is equipped with the full induced unary
structure definable over $a_{1}$, as $\exists a\left(\pi_{S_{a_{1}}}\left(a\right)=c\land\psi\left(a\right)\right)\in\tp_{S_{a_{1}}}\left(c\right)$
for every $\psi\in\tp_{S_{a_{1}}}\left(a\right)$, and $d$ has the
same type in $S_{b_{1}}$). Now if $a_{i}\leq a_{1}$, we have $a_{i}a_{1}\equiv b_{i}b_{1}$
and $aa_{1}\equiv bb_{1}$, so by Lemma \ref{lem:=000020comp} we have $a_{i}a\equiv b_{i}b$.
If $a_{i}\land a=a_{1}$, then either $a_{i}\preceq_{a_{1}}a_{2}$
or $a_{i}\succeq_{a_{1}}a_{3}$. Say, in the first case, we have by
assumption $\tp_{S_{a_{1}}}\left(c_{i},c_{2}\right)=\tp_{S_{b_{1}}}\left(d_{i},d_{2}\right)$
and $\tp_{S_{a_{1}}}\left(c,c_{2}\right)=\tp_{S_{b_{1}}}\left(d,d_{2}\right)$,
hence $\tp_{S_{a_{1}}}\left(c_{i},c\right)=\tp_{S_{b_{1}}}\left(d_{i},d\right)$
by Lemma \ref{lem:=000020in=000020cones}. Hence (1)\textendash (3)
hold.

(b). There is only one $a_{2}>a_1\in A,a_{2}\neq a$ such that $a_{2}\land a=a_{1}$.
Say $a\preceq_{a_{1}}a_{2}$. Then we take $b$ such that $a_{1}a_{2}a\equiv b_{1}b_{2}b$,
and argue similarly (note that this implies $\tp_{S_{a_{1}}}\left(c_{2}c\right)=\tp_{S_{b_{1}}}\left(d_{2}d\right)$
in particular ).

(c). There is no $a_{2}>a_1\in A,a_{2}\neq a$ such that $a_{2}\land a=a_{1}$.
Then we take $b$ such that $\tp\left(a_{1},a\right)=\tp\left(b_{1},b\right)$
and argue similarly.

\noindent (iii). The point $a$ is $\leq$-between two points in $A$, say $a_{0}\leq a_{1}$, so that there are no points of $A$ which are $\leq$-between $a_{0}$ and $a_{1}$.\\
Take $b$ be such that $a_{0}a_{1}a\equiv b_{0}b_{1}b$. Then (1)
holds. If $i$ is such that $a_{i}>a$, then $a_{i}\ge a_{1}$, and
$a_{i}a\equiv b_{i}b$ by Lemma \ref{lem:=000020comp}. Similarly
for $a_{i}<a$, so (2) holds. If $a_{i}\in A$ is such that $a_{2}:=a\land a_{i}\leq a_{0}$,
we have that $a$ and $a_{1}$ are in the same cone over $a_{2}$,
so $\tp_{S_{a_{2}}}\left(cc_{i}\right)=\tp_{S_{a_{2}}}\left(c_{1}c_{i}\right)=\tp_{S_{b_{2}}}\left(d_{1}d_{i}\right)=\tp_{S_{b_{2}}}\left(dd_{i}\right)$,
so (3) holds as well.
\end{proof}

\begin{cor}
The full induced structure on $S_{a}$ is just that of a linear order
with colors induced by all unary $a$-definable sets.
\end{cor}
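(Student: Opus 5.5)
The plan is to read the corollary off Proposition~\ref{prop:=000020back-and-forth}. Fix $a$ and take two tuples of cones $\bar c=(c_1,\ldots,c_n)$ and $\bar d=(d_1,\ldots,d_n)$ in $S_a$. Suppose they have the same type in the reduced language, i.e.\ the colored linear order with colors given by all unary $a$-definable sets. We must show they have the same type in the full structure over $a$. Equivalently, the full induced structure on $S_a$ adds no new definable relations.

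First, by Lemma~\ref{lem:=000020in=000020cones}, the reduced type of $\bar c$ is determined by the types $\tp_{S_a}(c_i,c_j)$ for $c_i \preceq_a c_j$. So the hypothesis is just that these pairwise reduced types agree with those of $\bar d$. Now pick representatives.

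\begin{itemize}
\item For each $i$, using that the colors on $S_a$ are the images of $a$-definable sets, choose $u_i\in c_i$ and $v_i\in d_i$ with $\tp(u_i/a)=\tp(v_i/a)$. This is possible by the same argument as in case (ii)(a) of Proposition~\ref{prop:=000020back-and-forth}: a complete type of a point over $a$ whose cone has a given color is realizable in any cone of that color, by compactness. The reduced type of a single cone determines which $\psi(x)\in L(a)$ have a realization in the cone.
\item The set $\{a\}\cup\{u_i\}$ is $\land$-closed, since $u_i\land u_j=a$ for $i\ne j$. Likewise for the $v_i$.
\item Check conditions (1)--(3) of Proposition~\ref{prop:=000020back-and-forth} for $(a,u_1,\ldots,u_n)$ versus $(a,v_1,\ldots,v_n)$.
  \begin{itemize}
  \item Condition (1) holds because both sets are $n$ pairwise incomparable points above a common root.
  \item Condition (2) concerns comparable pairs, which are only the pairs $(u_i,a)$, and these have matching types by choice.
  \item Condition (3) concerns only the center $a$, and there it asks exactly that $\tp_{S_a}(c_i,c_j)=\tp_{S_a}(d_i,d_j)$. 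This is our hypothesis.
  \end{itemize}
\end{itemize}

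Hence $a\bar u\equiv a\bar v$, so $\bar c\equiv_a \bar d$ in the full structure, since the cones are $a\bar u$-definable. Thus every relation on $S_a$ definable over $a$ in $T$ is definable in the colored linear order.

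One issue is parameters beyond $a$: ``full induced structure'' may allow arbitrary parameters. I would handle this by adjoining a finite $\land$-closed parameter set $B$. Parameters below $a$, or in the complement of the closed cone above $a$, are absorbed by Lemma~\ref{lem:=000020comp}. Parameters above $a$ lie in finitely many cones and can be treated as extra named points of $S_a$, together with their internal types. The statement as intended is presumably over $a$, and I would state it that way.

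The main obstacle is the first step: producing representatives $u_i\in c_i$ and $v_i\in d_i$ with equal types over $a$ from the mere equality of cone colors. This needs the color predicates to encode which complete $1$-types over $a$ are realized in a cone. That holds since the colors include all sets $\{\text{cones meeting } \psi(\cU)\}$. A compactness argument, possibly after passing to a saturated model, gives a single realization of the whole type inside the cone.
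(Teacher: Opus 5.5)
Your argument is correct and follows the paper's route. The paper gives no separate proof and reads the corollary off Proposition~\ref{prop:=000020back-and-forth}; your application of it to $\{a,u_1,\ldots,u_n\}$ versus $\{a,v_1,\ldots,v_n\}$, with representatives of equal type over $a$ chosen as in case (ii)(a), is exactly that reading. For condition (1), note that isomorphism as substructures also involves the colors and $\preceq_a$ on the representatives. These agree by your choice of $u_i,v_i$ and by the hypothesis on the order of the cones, and repeated cones are handled by repeating representatives.
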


\begin{cor}
\label{cor:=000020det=000020by=0000203-types}If $A\subseteq T$,
then $\bigcup_{\left(a,b,c\right)\in A^{3}}\tp\left(a,b,c\right)\vdash\tp\left(A\right)$.
\end{cor}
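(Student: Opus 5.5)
The plan is to reduce the statement to Proposition~\ref{prop:=000020back-and-forth}. It suffices to treat finite $A$: types of infinite sets are determined by the types of their finite subsets, and every finite subset is contained in a finite $\land$-closed subset, since in a meet tree the meet-closure of $n$ points has at most $2n-1$ elements.

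So let $A'$ be finite and let $B'$ realize the union of the $3$-types of triples from $A'$, indexed compatibly. Let $\bar A$ be the $\land$-closure of $A'$. The meet-closure of $A'$ is obtained from the pairwise meets $a_i\land a_j$, and $\{a_i\land a_j : i,j\}$ is already closed under $\land$. Indeed, in a meet tree $(a_i\land a_j)\land(a_k\land a_l)$ equals one of the pairwise meets among $a_i,a_j,a_k,a_l$, because the meets below a fixed point are linearly ordered.

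The key point is that the relevant information about the closure is carried by $3$-types of the original points. The tree relations among pairwise meets, such as whether $a_i\land a_j\le a_k\land a_l$ or $=$, are determined by types of at most $4$ points. They reduce further to triples: $a_i\land a_j\le a_k\land a_l$ iff $a_i\land a_j\le a_k$ and $a_i\land a_j\le a_l$, and $a_i\land a_j\le a_k$ holds iff $a_i\land a_j\le a_i\land a_k$, which is a statement about the triple $a_i,a_j,a_k$. Hence the map $a_i\land a_j\mapsto b_i\land b_j$ is a well-defined isomorphism of $\land$-closed substructures, giving condition (1) of Proposition~\ref{prop:=000020back-and-forth}.

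For condition (2), take elements $a_i\land a_j\ge a_k\land a_l$ of $\bar A$. Its $2$-type, as an imaginary-free computation, depends only on the points involved. Each element of $\bar A$ is definable as a meet of two original points, so the $2$-type is determined by the type of the corresponding subtuple of at most $4$ original points.

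The main obstacle is to get down to $3$ points. For comparable pairs $x\le y$ with $y=a_i\land a_j$ and $x=a_k\land a_l$, I would choose representatives with $x\le y$. Since $x$ is the meet of two points and lies below $y$, one can write $x=y\land a_m$ for some $m$: if $x<y$, at least one of $a_k,a_l$, say $a_k$, satisfies $y\land a_k=x$. Hence $x$ and $y$ are both definable from the triple $(a_i,a_j,a_m)$, and condition (2) follows from the $3$-types.

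Condition (3) is handled in the same way. A branching point $a_p$ together with two elements in distinct cones above it can be re-expressed as follows. Take original points $a_i$ and $a_j$ lying in those two cones. Then $a_p=a_i\land a_j$, and the cones are those of $a_i$ and $a_j$ above $a_i\land a_j$. The pair $\tp_{S}$ of cones is then determined by $\tp(a_i,a_j)$, which is a $2$-type.

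Applying Proposition~\ref{prop:=000020back-and-forth} to $\bar A$ and its image then gives $\tp(A')=\tp(B')$. Compactness passes this to arbitrary $A$.
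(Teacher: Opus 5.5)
Your proposal is correct and follows essentially the same route as the paper: reduce to finite $A$, pass to the $\land$-closure, and verify the three conditions of Proposition~\ref{prop:=000020back-and-forth} by the same two facts. First, comparable pairs in the closure are definable from three original points. Second, at a branching point, cones are represented by original points $a_i,a_j$, and the branching point is $a_i\land a_j$. Your explicit argument that $x=y\land a_k$ for one of $a_k,a_l$ supplies the detail the paper leaves as ``definable from some $3$ points''.
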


\begin{proof}
Let $A_{0}$ be the substructure of $T$ generated by $A$. By Proposition
\ref{prop:=000020back-and-forth}, the type of $A_{0}$ is implied
by:
\begin{itemize}
\item $\qftp\left(A_{0}\right)$,
\item the set of $2$-types $\tp\left(a,b\right)$ for $\left(a,b\right)\in A_{0}^{2}$
with $a<b$,
\item the set of $2$-types $\tp_{S_{c}}\left(c_{a},c_{b}\right)$ for $a,b,c\in A_{0}^{3}$
with $a\perp b$ and $a\land b=c$.
\end{itemize}
Without loss of generality $A$ is finite. Then the isomorphism type of $A_{0}$ as a tree
is determined by the set of $3$-types of elements in $A$. And for
every $m_{1}\perp m_{2},m_{1}\land m_{2}=m_{3}$ in $A_{0}$, there
are some $a_{1},a_{2}\in A$ such that $a_{i}$ is in the same open
cone over $m_{3}$ as $m_{i}$, so the local linear orders type of
$A_{0}$ is also determined. Given any $m_{1}\leq m_{2}$ in $A_{0}$,
they are both definable from some $3$ points in $A$, so their type
is determined by a $3$-type from $A$. Given any $m_{1}\perp m_{2},m_{1}\land m_{2}=m_{3}$
in $A_{0}$, there are some $a_{1},a_{2}\in A$ such that $a_{i}$
is in the same open cone over $m_{3}$ as $m_{i}$, and $m_{3}$ is
definable from $a_{1},a_{2}$, so $\tp_{S_{m_{3}}}\left(c_{m_{1}},c_{m_{2}}\right)$
is determined by $\tp\left(a_{1}a_{2}\right)$.
\end{proof}
\begin{theorem}\label{thm: dp-min of conv ord trees}
The theory of $(T, <, \land, (P_i)_{i \in I}, x\preceq_{z}y)$ is dp-minimal, monadic NIP and distal.
\end{theorem}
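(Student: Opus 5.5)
The plan is to follow Simon's analysis of dp-minimality for colored trees and colored linear orders (\cite[Section 4]{simon2011dp}), now that Corollary \ref{cor:=000020det=000020by=0000203-types} shows types are determined by $3$-types. Monadic NIP then follows for free. Any expansion of $(T,<,\land,(P_i)_{i\in I},x\preceq_z y)$ by unary predicates is again a structure of the same form, so it is also dp-minimal, hence NIP. Distality will be proved separately at the end.

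For dp-minimality I would use the criterion via indiscernible sequences. Let $I=(\bar a_i)$ and $J=(\bar b_j)$ be mutually indiscernible sequences and $c$ a singleton; we show one of $I,J$ stays indiscernible over $c$. Mutual indiscernibility is preserved under taking $\land$-closures of the entries, so we may assume each $\bar a_i$, $\bar b_j$ is $\land$-closed together with the meets across elements.

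By Corollary \ref{cor:=000020det=000020by=0000203-types}, indiscernibility of $I$ over $c$ reduces to indiscernibility over $c$ of the sequence of pairs and triples drawn from $I$. By Proposition \ref{prop:=000020back-and-forth}, the relevant data is of three kinds:
\begin{itemize}
\item the tree configuration of $c$ relative to the elements of $I$;
\item the $2$-types along branches, which are handled by Lemma \ref{lem:=000020comp};
\item the colored linear order types $\tp_{S_m}$ of cones at the relevant meet points $m$.
\end{itemize}

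Following Simon's argument for trees, consider the point $c^\ast$ where $c$ attaches to the $\land$-closure of $IJ$, meaning the maximum of $c\land x$ over $x$ in $IJ$ (such a maximum exists after passing to a cofinal reduct by indiscernibility). Then split into cases:
\begin{itemize}
\item $c^\ast$ lies in an open cone disjoint from all but finitely many of the $\bar a_i$ (respectively $\bar b_j$);
\item $c^\ast$ sits on a branch, where one sequence cuts the branch and only one of them can interleave with $c$;
\item $c$ lies above a meet point $m$ in a cone distinct from those of the sequence elements.
\end{itemize}
In the third case the only new information is the position of $c$'s cone in the colored linear order $S_m$. By the preceding corollary the induced structure on $S_m$ is a colored linear order, which is dp-minimal by \cite[Proposition 4.2]{simon2011dp}. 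Applying that to the sequences of cones of $I$ and $J$ at $m$ (these are mutually indiscernible in $S_m$) shows $c$ breaks at most one of them. Lemma \ref{lem:=000020comp} then ensures the rest of the type of $c$ over each sequence is determined by data over $m$ that does not vary along the sequence.

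The main obstacle is this bookkeeping: showing that $c$ can create a cut in at most one sequence, either in the tree order or in some $\preceq_m$, and that two such cuts in different sequences cannot coexist. If $I$ and $J$ both witness cuts, I would argue that the branches or cones involved must coincide by mutual indiscernibility. That reduces the situation to a single tree branch, which is a linear order, or a single $S_m$, and those cases are handled by the linear-order result.

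For distality I would use Simon's criterion. If $I_1+I_2$ is indiscernible with $I_1,I_2$ dense without endpoints and $I_1+c+I_2$ is indiscernible, then indiscernibility of $I_1+I_2$ over $A$ implies indiscernibility of $I_1+c+I_2$ over $A$. By Corollary \ref{cor:=000020det=000020by=0000203-types} it suffices to check this for singletons $A=\{e\}$, because types are determined by $3$-types and therefore by small subsets. Here $e$ is located either on the tree skeleton of the sequence or in a cone at some meet. In each case, the type of $e$ with elements of the sequence is governed by the linear orders (tree branches and the $\preceq_m$ orders), which are distal. Inserting $c$ into a non-cut position does not change these types.
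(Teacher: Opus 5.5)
\textbf{Monadic NIP.} Your argument is the same as the paper's.

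\textbf{Dp-minimality.} Here there is a genuine gap, exactly at what you call the ``main obstacle'', and that step is the whole content of the proof. The claim that $c$ cannot break two mutually indiscernible sequences is asserted (``I would argue that the branches or cones involved must coincide''), not derived. The device meant to organize the cases also does not work as stated.
\begin{itemize}
\item The point $c^\ast=\max\{c\land x : x\in IJ\}$ need not exist. Take $I$ $\leq$-increasing, $c$ above all of $I$, and $c\land b_j$ below $I$ for all $j$. Then the values $c\land a_i=a_i$ have no maximum, and no cofinal subsequence of a sequence indexed by a dense order without endpoints has a last element.
\item In your third case the meet point $m$ is selected by $c$. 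In general neither sequence is indiscernible over $m$ nor spread over distinct cones at $m$: take $m=a_{i_0}\land a_j$ for a sequence whose meets $a_i\land a_j$, $i<j$, depend only on $i$. So there are no mutually indiscernible sequences in $S_m$ to which dp-minimality of colored orders could be applied.
\item Meets across different entries are not in $\dcl$ of a single entry, so they cannot simply be added to the entries.
\end{itemize}

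The missing idea, which the paper supplies, is to reduce to singletons and classify their indiscernible sequences. A non-constant one is either $\leq$-monotone, or a fan with constant meet $\beta$ that is $\preceq_\beta$-monotone, or has $a_i\land a_j$ depending only on $i$ (resp.\ only on $j$). In the last case it can be replaced by the monotone sequence of these meets without changing indiscernibility over $\alpha$. For the first two kinds, the paper:
\begin{itemize}
\item defines precisely when $\alpha$ cuts the sequence;
\item shows via Lemma \ref{lem:=000020comp} and Proposition \ref{prop:=000020back-and-forth} that not cutting implies indiscernibility over $\alpha$;
\item checks that $\alpha$ cannot cut two mutually indiscernible such sequences. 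For instance, a chain is indiscernible over the common meet $\beta$ of a fan mutually indiscernible with it, so $\beta$ does not cut the chain.
\end{itemize}
Tuples are reduced to this case by Corollary \ref{cor:=000020det=000020by=0000203-types}, so one may take pairs $(b_i,c_i)$. One then shows that indiscernibility of $(b_i)$, $(c_i)$ and $(b_i\land c_i)$ over $\alpha$ implies that of $(b_i,c_i)$.

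\textbf{Distality.} Your direct verification is only gestured at (``inserting $c$ into a non-cut position does not change these types''), and it would need the same case analysis once more. The reduction to a singleton $A$ also does not follow from $3$-types, since triples containing two parameters from $A$ occur. It follows instead from the usual induction that absorbs parameters into the sequence, which requires the statement for sequences of tuples.

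None of this is needed. Once dp-minimality is known, Simon's criterion applies: a dp-minimal theory is distal if and only if no non-constant indiscernible sequence is totally indiscernible. The definable linear order $\prec$ of Remark \ref{rem: def lin ord in convex tree} excludes non-constant totally indiscernible sequences.
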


\begin{proof}
First we prove dp-minimality. Let $\left(a_{i}\right)_{i\in I},\left(b_{j}\right)_{j\in J}$ be
mutually indiscernible sequences with $I,J$ dense without endpoints,
and $\alpha\in T$ an element. 

Assume first that $a_{i},b_{j}$ are singletons. It is easy to see
that every non-constant indiscernible sequence must satisfy one of
the following:
\begin{enumerate}
\item The sequence $\left(a_{i}\right)$ is $\leq$-monotonous (increasing
or decreasing).
\item The elements $a_{i}$ are pairwise $\leq$-incomparable, and $a_{i}\land a_{j}$
is constant equal to some $\beta$, and $a_{i}$ is $\preceq_{\beta}$-monotonous.
\item The elements $a_{i}$ are $\leq$-incomparable and $a_{i}\land a_{j},i<j$
depends only on $i$. Then let $a_{i}':=a_{i}\land a_{j}$ (for some/any
$j>i$). Then $\left(a_{i}'\right)$ is an $\leq$-increasing indiscernible
sequence.
\item The elements $a_{i}$ are $\leq$-incomparable and $a_{i}\land a_{j},i<j$
depends only on $j$. Then let $a_{j}':=a_{i}\land a_{j}$ (for some/any
$i<j$) is an $\leq$-decreasing indiscernible sequence.
\end{enumerate}
Assume $\left(a_{i}\right)$ is in case 1. We say that $\alpha$ does not
cut $\left(a_{i}\right)$ if $\left\{ x:x\leq\alpha\right\} $ either
contains the sequence, or is disjoint from it. In case 2, we say that
$\alpha$ doesn't cut $\left(a_{i}\right)$ if either $\beta\not\leq\alpha$,
or $\beta<\alpha$ and $\left\{ x:x\preceq_{\beta}\alpha\right\} $
either contains the sequence, or is disjoint from it. In either case,
if $\alpha$ does not cut $\left(a_{i}\right)$, then it is indiscernible
over $\alpha$, by Lemma \ref{lem:=000020comp} in case 1 and by Lemma
\ref{prop:=000020back-and-forth} in case 2. E.g., in case 2 with
$a_{i}$ $\preceq_{\beta}$-increasing and $\alpha$ such that $a_{i}\preceq_{\beta}\alpha$
for all $i$, let $i_{1}<\ldots<i_{n},j_{1}<\ldots<j_{n}$ and $k\in I$
greater than all these indices. Clearly $\left(a_{i}\right)$ is indiscernible
over $\beta$ since $\beta$ is definable from any two elements, so
$a_{i}\equiv_{\beta}a_{j}$, and for the cones we have $\tp_{S_{\beta}}\left(c_{i_{t}},c_{\alpha}\right)=\tp_{S_{\beta}}\left(c_{j_{t}},c_{\alpha}\right)$
for all $1\leq t\leq n$ by Lemma \ref{lem:=000020in=000020cones},
as $\tp_{S_{\beta}}\left(c_{i_{t}}/c_{k}\right)\cup\tp_{S_{\beta}}\left(c_{\alpha}/c_{k}\right)\vdash\tp\left(c_{i_{t}},c_{\alpha}/c_{k}\right)$.
Thus Lemma \ref{prop:=000020back-and-forth} applies.

In cases 3 and 4, if $\left(a_{i}\right)$ is $\alpha$-indiscernible,
then it is also the case for $\left(a_{i}'\right)$. Conversely, if
$\left(a_{i}'\right)$ is $\alpha$-indiscernible, then $\alpha$
does not cut $\left(a'_{i}\right)$ and it is easy to check using Lemmas
\ref{lem:=000020comp} and \ref{prop:=000020back-and-forth} that
$\left(a_{i}\right)$ is indiscernible over $\alpha$. So we can replace
$\left(a_{i}\right)$ by $\left(a_{i}'\right)$ which is in case 1.

So to verify dp-minimality in this case, assume that $\left(a_{i}\right),\left(b_{j}\right)$
are mutually indiscernible sequences each in case 1 or 2. Then $\alpha$
cannot cut both of them \textemdash{} which implies that at least
one of the sequences is indiscernible over $\alpha$ by the previous
paragraph. For example, assume $\left(a_{i}\right)$ is in case 1,
$\leq$-increasing and $\left(b_{j}\right)$ is in case 2, let $\beta:=b_{i}\land b_{j}$.
If $\alpha$ cuts $\left(b_{j}\right)$, then $\alpha>\beta.$ As
$\left(a_{i}\right)$ is $\beta$-indiscernible, $\beta$ does not
cut $\left(a_{i}\right)$. Then the only way for $\alpha$ to cut
$\left(a_{i}\right)$ is if $\beta<\alpha_{i}$ for all $i$ and $\alpha$
and $\alpha_{i}$ are in the same open cone over $\beta$, but that
would contradict mutual indiscernibility.

Now we reduce the case of general sequences of tuples to the previous
case of singletons. 

Let $\left(a_{i}\right)$ be an indiscernible sequence of $n$-tuples
such that $\left(a_{i}\right)$ is not $\alpha$-indiscernible, then
there is some indiscernible sequence $\left(d_{i}\right)$ of singletons
with $d_{i}\in\dcl\left(a_{i}\right)$ such that$\left(d_{i}\right)$
is not $\alpha$-indiscernible.

By Corollary \ref{cor:=000020det=000020by=0000203-types}, without loss of generality $n=2$,
write $a_{i}=\left(b_{i},c_{i}\right)$ and $m_{i}:=b_{i}\land c_{i}$.
We claim that if each of $\left(b_{i}\right),\left(c_{i}\right),\left(m_{i}\right)$
is indiscernible over $\alpha$, then $\left(a_{i}\right)$ is also
indiscernible over $\alpha$ \textemdash{} be the same analysis of
cases as in \cite{simon2011dp}, using Proposition \ref{prop:=000020back-and-forth}.

Monadic NIP follows since any expansion of $(T, <, \land, (P_i)_{i \in I}, x\preceq_{z}y)$ by unary predicates is again a structure of the same form (with different $I$), hence also dp-minimal.

Finally, distality follows using \cite[Lemma 2.10]{simon2013distal} since the theory is dp-minimal and any infinite indiscernible sequence
is not totally indiscernible because of the definable total ordering (Remark \ref{rem: def lin ord in convex tree}).
\end{proof}

\begin{cor}
Any colored tree  (or colored meet tree) has a dp-minimal distal monadically NIP expansion.
\end{cor}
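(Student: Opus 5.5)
The statement to prove is the final corollary: every colored tree (or colored meet tree) has a dp-minimal, distal, monadically NIP expansion. The plan is to reduce it directly to Theorem~\ref{thm: dp-min of conv ord trees}.

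\textbf{Step 1: from colored trees to colored meet trees.} Given a colored tree $(T,\le,(P_i)_{i\in I})$, I first pass to a colored meet tree. If $T$ is already a meet tree, there is nothing to do. Otherwise I replace $T$ by its meet-closure $\widehat T$, obtained by adding formal meets $x\wedge y$ as Dedekind-type cuts of the branches $\{z: z\le x\}\cap\{z:z\le y\}$. I add a new unary predicate $P_\ast$ naming the original universe, and keep each $P_i$ on $P_\ast$. The original tree is then definable in $(\widehat T,\le,\wedge,P_\ast,(P_i))$ as $P_\ast$ with the induced order. Consequently, any dp-minimal distal monadically NIP expansion of the latter induces, on the definable set $P_\ast$, an expansion of the original colored tree with the same properties, because dp-minimality, distality and monadic NIP all pass to structures induced on a definable set. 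Formally, what we obtain is a reduct of an expansion of $T$ living on a definable subset, and I would phrase the corollary via this ``interpretation'' reading. The interpreted structure is interpretable in the larger one, so it has the same properties, since they are preserved under passing to the induced structure on a definable set.

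\textbf{Step 2: choosing the cone orders.} For a colored meet tree, I choose, for every $z$, an arbitrary linear order $\preceq_z$ on the set of open cones above $z$; this uses the axiom of choice, or well-orders the cones. I then interpret the ternary relation $x\preceq_z y$ to hold iff $x,y>z$ and the cone of $x$ above $z$ is $\preceq_z$ the cone of $y$. This relation is invariant under $\sim_z$, so it defines a linear order on the cones as in the definition preceding Theorem~\ref{thm: dp-min of conv ord trees}.

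\textbf{Step 3: invoking the theorem.} By Theorem~\ref{thm: dp-min of conv ord trees}, the structure $(T,<,\wedge,(P_i),x\preceq_z y)$ is dp-minimal, monadic NIP and distal, and it is an expansion of the given colored meet tree.

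\textbf{Main obstacle.} The only delicate point is Step~1 for trees that are not meet trees. There one must check either that the corollary is meant up to the definable-subset transfer described above, or that the meet-closure is definable enough. If that is problematic, I would instead restrict the corollary statement to the meet-tree case and handle general trees by that interpretation remark.
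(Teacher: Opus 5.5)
Your proposal is correct and is the paper's argument: embed the tree into a meet tree, name the original universe by a new color so the colored tree is definable on singletons in a colored meet tree, choose arbitrary cone orders $\preceq_z$, and apply Theorem~\ref{thm: dp-min of conv ord trees}. The hedging in your final paragraph is unnecessary. The full induced structure on $P_\ast$ (all traces of $\emptyset$-definable relations) is a genuine expansion of the colored tree on its own universe, and it inherits dp-minimality, monadic NIP and distality; for distality this is because indiscernibility over subsets of $P_\ast$ is the same in both structures, which is also why one must take the full induced structure rather than an arbitrary reduct. So the statement holds as written, with no weakening.
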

\begin{proof}
	Immediate by Theorem \ref{thm: dp-min of conv ord trees}, as any tree embeds into a meet tree, hence any colored tree is definable on singletons in a colored meet tree.
\end{proof}

\begin{cor}
The theory $\wideT$ from Section \ref{sec: Trees with circularly ordered open cones} is monadically NIP and distal.
\end{cor}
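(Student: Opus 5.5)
The plan is to realize $\wideT$ as a reduct of (or interpretable with no new structure in) a theory of the form treated in Theorem~\ref{thm: dp-min of conv ord trees}, namely a colored meet tree with linearly ordered open cones, and then transfer monadic NIP and distality. Both properties are preserved under adding and removing a constant, so it suffices to handle $T^{\ast}$ itself; naming $r$ is harmless (monadic NIP is preserved under naming constants, distality too).

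The key step is to turn the circular orders $C^{\ast}(x,-,-,-)$ on the cones above each $x$ into linear orders in a way that is definable in a monadic expansion. I would work in a countable (or monster) model $M\models T^{\ast}$ and, for each $x$, choose a cone $o_x$ above $x$ to serve as a cut point. The linear order $\preceq_x$ on cones above $x$ is defined by cutting the circle at $o_x$: for cones $y,z \neq o_x$ set $y \prec_x z$ iff $C^{\ast}(x,o_x,y,z)$, and put $o_x$ first. Conversely, the circular order is recovered from $\preceq_x$ by the standard formula $C(y_0,y_1,y_2)\iff$ the triple is cyclically increasing with respect to $\preceq_x$. Hence $C^{\ast}$ is quantifier-free definable in $(M,<,\land,\preceq)$. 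The choice of $o_x$ for all $x$ simultaneously need not be definable, but that is fine: we simply consider the expanded structure $N=(M,<,\land,x\preceq_z y)$, which is of the form in Theorem~\ref{thm: dp-min of conv ord trees} (with no colors), and note that $(M,<,\land,C^{\ast})$ is a reduct of $N$ (every $L^{\ast}$-definable set is $N$-definable).

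Then I conclude. Any unary expansion of $M$ in $L^{\ast}$ is a reduct of the corresponding colored expansion of $N$, which is NIP by Theorem~\ref{thm: dp-min of conv ord trees}; NIP passes to reducts, so $T^{\ast}$ and hence $\wideT$ is monadically NIP (in particular dp-minimal). Distality does not pass to reducts in general, so for it I would instead use that $\wideT$ is dp-minimal and apply \cite[Lemma 2.10]{simon2013distal} directly. I would argue that no infinite indiscernible sequence of singletons is totally indiscernible: classify such a sequence as in the proof of Theorem~\ref{thm: dp-min of conv ord trees}. If it is $\leq$-monotone, it is not totally indiscernible. If it is an antichain with constant meet $\beta$, the cones above $\beta$ form an indiscernible sequence in a dense circular order, where a 3-element cyclic orientation $C(a_0,a_1,a_2)$ versus $C(a_0,a_2,a_1)$ breaks symmetry. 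The cases with meets depending on one index give $\leq$-monotone sequences of meets, which are asymmetric.

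The main obstacle I expect is checking that the given reduct argument is valid for $M$ in a monster model, i.e.\ that the expansion $N$ is well-defined on a saturated model (using choice for $o_x$ is fine), and that the distality criterion of Simon applies to $\wideT$ in this form. If needed I would verify the non-total-indiscernibility carefully in the circular case, which is the only genuinely new ingredient compared to the linear one.
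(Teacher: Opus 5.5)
Your proposal is correct and follows the same route as the paper. You linearize each circular order $C^{\ast}(z,-,-,-)$; your explicit cut at a chosen cone $o_z$ is just a concrete version of the choice the paper makes implicitly. This makes $\wideT$ a reduct of a structure covered by Theorem~\ref{thm: dp-min of conv ord trees}, which gives monadic NIP. You then obtain distality from dp-minimality via \cite[Lemma 2.10]{simon2013distal}, checking through the case split (1)--(4) that $<$ or $C^{\ast}$ blocks total indiscernibility. One small slip: the criterion is about \emph{non-constant} indiscernible sequences of singletons, which is what your case analysis actually treats.
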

\begin{proof}
	
	Given a model of $M \models \wideT$, it is definable on singletons in a structure $(T, <, \land, x\preceq_{z}y)$, where $(T, <, \land)$ is the underlying tree and  we  choose $x\preceq_{z}y$ to be a dense linear order on the open cones over $z$ so that $C^{\ast}(e, a,b,c) := (a \prec_{e} b \prec_{e}  c) \lor (b \prec_{e}  c \prec_{e}  a) \lor (c \prec_{e}  a \prec_{e}  b)$ for all $e,a,b,c$. It follows by Theorem \ref{thm: dp-min of conv ord trees} that $\wideT$ is monadically NIP, in particular dp-minimal. Also note that  any non-constant indiscernible sequence in a model of $\wideT$ is not totally indiscernible (it is enough to check it for sequences of singletons, and similarly to the cases (1)--(4) in the proof of Theorem \ref{thm: dp-min of conv ord trees}, failure of total indiscernibility is given by $<$ or by $C^{\ast}$). Hence $\wideT$ is distal using  \cite[Lemma 2.10]{simon2013distal}.
\end{proof}

\bibliographystyle{plain}
\bibliography{ref}

\end{document}